\theoremstyle:=definition,remark,plain\do{%
        \expandafter\g@addto@macro\csname th@\theoremstyle\endcsname{%
            \addtolength\thm@preskip\parskip
            }%
        }
\z@skip \halign{\relax\hfil\txtline@@{##}\hfil\cr\leavevmode#1\crcr}}}
\theoremstyle{definition}
\newtheorem{thm}{Theorem}[section]
\newtheorem{lem}[thm]{Lemma}
\newtheorem{cor}[thm]{Corollary}
\newtheorem{defn}[thm]{Definition}
\newtheorem{propn}[thm]{Proposition}
\newtheorem*{thm*}{Theorem}
\newtheorem*{nts}{Note to self}
\theoremstyle{remark}
\newtheorem*{rk}{Remark}
\newtheorem{exs}[thm]{Examples}
\newtheoremstyle{custthm}{\parskip}{}{\normalfont}{}{\bfseries}{.}{ }{\thmname{#1} \thmnote{#3}}
\theoremstyle{custthm}
\newcommand{\nio}{\mathrm{nio}}
\newcommand{\tensor}[1]{\underset{#1}{\otimes}}
\newcommand{\gr}{\mathrm{gr}}
\newcommand{\fn}{\mathbf{FN}_p}
\newcommand{\cpi}{\mathrm{cpi}^{\overline{k\Delta^+}}}
\newcommand{\cpisum}[2]{#1|^{#2}}
\newcommand{\Kdim}{\mathrm{Kdim}}
\begin{document}

\binoppenalty=\maxdimen
\relpenalty=\maxdimen

\title{On the catenarity of virtually nilpotent mod-$p$ Iwasawa algebras}
\author{William Woods}
\date{\today}
\maketitle
\begin{abstract}
Let $p>2$ be a prime, $k$ a finite field of characteristic $p$, and $G$ a nilpotent-by-finite compact $p$-adic analytic group. Write $kG$ for the completed group ring of $G$ over $k$. We show that $kG$ is a catenary ring.
\end{abstract}

\newpage
\tableofcontents

%
%
%
%
%
%

%
\newpage
\section*{Introduction}
\addcontentsline{toc}{section}{Introduction}

Fix a prime $p$, a commutative pseudocompact ring $k$ (e.g. $\mathbb{F}_p$ or $\mathbb{Z}_p$) and a compact $p$-adic analytic group $G$. (Such groups are perhaps most accessibly characterised as those groups $G$ which are isomorphic to a closed subgroup of $GL_n(\mathbb{Z}_p)$ for some $n$.) The \emph{completed group ring} $kG$ (sometimes written $k[[G]]$) is defined by
$$kG := \underset{N}{\underleftarrow{\lim}}\; k[G/N],$$
where the inverse limit ranges over all open normal subgroups $N$ of $G$, and $k[G/N]$ denotes the ordinary group algebra of the (finite) group $G/N$ over $k$. This ring satisfies an obvious universal property \cite[Lemma 2.2]{woods-prime-quotients}, and modules over it characterise \emph{continuous} $k$-representations of $G$ (which has the profinite topology). When $k = \mathbb{F}_p$, $\mathbb{Z}_p$ or related rings, this is often called the \emph{Iwasawa algebra} of $G$.

Iwasawa algebras (and related objects, such as locally analytic distribution algebras \cite{schn-teit1}) have recently become a very active research area due to their number-theoretic interest, for instance in the $p$-adic Langlands programme: see \cite{schn-teit2}, for example. They are also interesting objects of study in their own right, as an interesting class of noetherian rings: see \cite{ardakovbrown} for a 2006 survey of what is known about these rings.

Our main result is the following.

\textbf{Theorem A.} Take $p>2$. Let $G$ be a nilpotent-by-finite compact $p$-adic analytic group, and let $k$ be a finite field of characteristic $p$. Then $kG$ is a catenary ring.\qed

Recall that a ring $R$ is said to be \emph{catenary} if any two maximal chains of prime ideals with common endpoints have the same length, i.e. whenever
$$P = P_1 \lneq P_2 \lneq \dots \lneq P_r = P'$$
$$P = Q_1 \lneq Q_2 \lneq \dots \lneq Q_s = P'$$
are two chains of prime ideals of $R$ which cannot be refined further (i.e. by adding an extra prime ideal $P_i \lneq I \lneq P_{i+1}$ or $Q_i \lneq I \lneq Q_{i+1}$), we have that $r=s$. This is a ``well-behavedness" condition on the classical Krull dimension of $kG$: it says that, whenever $P\lneq P'$ are \emph{adjacent} prime ideals and the height $h(P)$ of $P$ is finite, then we have $h(P') = h(P) + 1$.

This result goes some way towards redressing the long-standing gap between Iwasawa algebras and similar algebraic objects; for instance, similar catenarity results had already been established for classical group rings of virtually polycyclic groups (in a special case in \cite{roseblade}, in full generality in \cite{ll}), for universal enveloping algebras of finite-dimensional soluble Lie algebras over $\mathbb{C}$ \cite{gabber}; for quantised coordinate rings over $\mathbb{C}$ \cite{goodearl-zhang}, and over \cite{yakimov}; for $q$-commutative power series rings \cite{letzter-wang}; and so on.

In proving this result, we crucially use the prime extension theorem, \cite[Theorem A]{woods-extensions-of-primes}. Before we can state this, we need to recall a few concepts.

Let $G$ be a nilpotent-by-finite compact $p$-adic analytic group. Then \cite[Theorem C]{woods-struct-of-G} there exists a unique maximal subgroup $H$ of $G$ with the property that $H$ is an open normal subgroup of $G$, $H$ contains a finite normal subgroup $F$, and $H/F$ is nilpotent $p$-valuable. This $H$ is called the \emph{finite-by-(nilpotent $p$-valuable) radical} of $G$, and is written $\fn(G)$.

Given a prime ideal $P$ of $kG$, as in \cite[\S 1.3]{ardakovInv} and \cite[Introduction]{woods-extensions-of-primes}, we will write $P^\dagger$ for the kernel of the natural composite map $G\to (kG)^\times\to (kG/P)^\times$. We say that $P$ is \emph{faithful} if $P^\dagger = 1$, and $P$ is \emph{almost faithful} if $P^\dagger$ is finite.

We now state the prime extension theorem:

\begin{thm*}\cite[Theorem A]{woods-extensions-of-primes} 
Take $p>2$. Let $G$ be a nilpotent-by-finite compact $p$-adic analytic group, and let $k$ be a finite field. Write $H = \fn(G)$. If $P$ is an almost faithful prime ideal of $kH$, then the ideal $PkG$ is a prime ideal of $kG$.\qed
\end{thm*}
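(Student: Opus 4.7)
The plan is to exploit that $H$ is open and normal in $G$ with finite index, so $kG$ carries a natural structure of crossed product $kH * (G/H)$ with $G/H$ finite. Primeness of $PkG$ should then follow from classical results on prime ideals in crossed products of a prime noetherian ring by a finite group, provided I can verify two conditions: (i) $P$ is $G$-stable under the conjugation action of $G$ on $kH$, so that $PkG$ is genuinely a two-sided ideal; and (ii) the induced action of $G/H$ on the Goldie quotient ring of $kH/P$ is $X$-outer.

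For (i), the strategy is to exploit the rigidity of almost faithful primes. The $G$-orbit of $P$ is a finite set $\{P=P_1,\ldots,P_m\}$ of almost faithful primes of $kH$, and the aim is to force $m=1$. Since $H=\fn(G)$ contains a finite normal subgroup $F\trianglelefteq G$ with $H/F$ nilpotent $p$-valuable, I would first reduce to the faithful case by passing to the quotient of $H$ by the (finite, $G$-invariant) normal closure of $P^\dagger$ in $G$. Then, for the resulting faithful prime $\bar P$ of the Iwasawa algebra of a nilpotent $p$-valuable group, I would attach to $\bar P$ a $G$-equivariant invariant --- for instance a controller subgroup, or an ideal of the centre of $k(H/F)$ --- and use uniqueness of this invariant to conclude $\bar P^g = \bar P$, and hence $P^g = P$.

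For (ii), I would argue by contradiction. If some $gH \in G/H$ induced an $X$-inner automorphism of $kH/P$, then conjugation by $g$ would be implemented by a unit $u$ in the Goldie quotient $Q(kH/P)$, so that $gu^{-1}$ centralises $kH/P$ inside a suitable localisation of $kG/PkG$. Tracing this centralising element back through the Jennings-type filtration on $kG$ and pushing it into $H$ should then yield an element of $H$ outside $P^\dagger$ whose image in $kH/P$ is nevertheless trivial, contradicting the definition of $P^\dagger$.

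The hard part will be step (i): forcing $G$-invariance of the almost faithful prime $P$. This requires a detailed and $G$-equivariant classification of almost faithful primes of $kH$, which in turn rests on finer structure theory for Iwasawa algebras of nilpotent $p$-valuable groups; I expect this to consume the bulk of the technical effort, with step (ii) and the crossed-product endgame comparatively routine.
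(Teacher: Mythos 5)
You should first be aware that this paper does not prove the statement at all: it is imported verbatim from \cite{woods-extensions-of-primes} with a \qed{} and used as a black box, so there is no internal proof to compare against. Judged on its own terms, your plan has a fatal gap at step (i). You propose to \emph{prove} that every almost faithful prime of $kH$ is $G$-stable, and you earmark this as the bulk of the work; but the claim is false. Take $p>2$, $G=\mathbb{Z}_p^2\rtimes\langle\sigma\rangle$ with $\sigma$ of order $2$ acting by inversion. Then $\Delta^+(G)=1$, $G$ itself is not finite-by-(nilpotent $p$-valuable), and $H=\fn(G)=\mathbb{Z}_p^2$, so $kH=k[[t_1,t_2]]$. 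The ideal $P=(t_1-t_2^2)$ is a faithful prime: the quotient is $k[[t_2]]$, and $(1+t_2^2)^a(1+t_2)^b=1$ forces $a=b=0$ because the lowest-order terms sit in degrees $2p^m$ and $p^n$, and $2p^m\neq p^n$ for $p$ odd (note this already uses $p>2$). Yet $P^\sigma=\big(t_1(1+t_2)^2+t_2^2(1+t_1)\big)$, and reducing this generator modulo $P$ gives $2t_2^2(1+t_2+t_2^2)\neq 0$, so $P^\sigma\neq P$ and $PkG$ is not even a two-sided ideal. Thus $G$-stability must be read as a (tacit) hypothesis of the cited theorem, not a consequence; consistently, every invocation of it in this paper supplies a prime that is visibly stable under the group in question (e.g.\ $QkS$ with $S=\mathrm{Stab}_G(Q)$ in Proposition \ref{propn: af primes controlled by FNp}). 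Your proposed mechanism cannot rescue this: the controller subgroup is $G$-stable, but the attached ideal is only $G$-\emph{equivariant}, $(P^g)\cap k\Delta=(P\cap k\Delta)^g$, which forces nothing (in the example $\Delta=H$ and the "invariant" is $P$ itself).

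Conversely, step (ii), which you call comparatively routine, is the entire content of the theorem, and your sketch of it does not work. If some $g\notin H$ acts X-innerly on $kH/P$, the implementing unit $u$ lives in the Goldie quotient ring and need not be (the image of) a group element, so "$gu^{-1}$ centralises $kH/P$" does not produce an element of $H\setminus P^\dagger$ with trivial image; faithfulness of $P$ alone cannot deliver the contradiction. The argument has to play the X-inner automorphism off against the \emph{maximality} of $H=\fn(G)$: Montgomery--Passman $\Delta$-methods confine X-inner automorphisms to ones implemented near $\Delta(H)$, and an outside element acting that way would enlarge the finite-by-(nilpotent $p$-valuable) radical, which is the contradiction actually available. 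Relatedly, your outline never uses the hypothesis $p>2$, which the source theorem requires, so whatever it proves is not this theorem. Finally, X-outerness of the $G/H$-action is sufficient but not necessary for primeness of the crossed product, so even the reduction "prime iff X-outer" needs the finer criteria of \cite[\S 14]{passmanICP} rather than the crude dichotomy you invoke.
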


We will use this result to generalise Ardakov's analogue of Zalesskii's theorem \cite[Theorem 8.6]{ardakovInv} to our current context.

Recall \cite[Definition 1.4, Lemma 1.10]{woods-struct-of-G} that a group $G$ is \emph{orbitally sound} if, whenever $H$ is a subgroup of $G$ with finitely many $G$-conjugates, and $H^\circ$ is the largest normal subgroup of $G$ contained in $H$, we have $[H:H^\circ]<\infty$; and recall that nilpotent $p$-valuable groups are indeed orbitally sound \cite[Proposition 5.9]{ardakovInv}.

As in \cite{woods-struct-of-G}, we will write throughout this paper
\begin{align*}
\Delta(G)^{\hphantom{+}} &= \big\{x\in G \;\big|\; [G: \mathbf{C}_G(x)] < \infty\big\},\\
\Delta^+(G) &= \big\{x\in \Delta \;\big|\; o(x) < \infty\big\},
\end{align*}
where $o(x)$ denotes the order of $x$. We will also often simply write $\Delta$ and $\Delta^+$ to denote $\Delta(G)$ and $\Delta^+(G)$. For the basic properties of these (closed, characteristic) subgroups, see \cite[Lemma 1.3 and Theorem D]{woods-struct-of-G}.

Following Roseblade \cite{roseblade}, we say that a prime ideal $P$ of $kG$ is \emph{controlled} by the normal subgroup $H$ of $G$ if the right ideal $(P\cap kH)kG$ is equal to $P$.

\textbf{Theorem B.} Let $G$ be a nilpotent-by-finite, orbitally sound compact $p$-adic analytic group. Suppose $P$ is an almost faithful prime ideal of $kG$. Then $P$ is controlled by $\Delta$.\qed

The analogous classical result, for group algebras of polycyclic-by-finite groups, was proved by Roseblade \cite[Corollary H3]{roseblade}. Taken together with the results of \cite{ardakovGMJ} and \cite{woods-prime-quotients}, this also gives a precise partial answer to a question of Ardakov and Brown \cite[Question G]{ardakovbrown}: when $G$ is as in Theorem B, this completely describes the prime ideals of $kG$ in terms of closed normal subgroups, central elements and prime ideals of (classical) group algebras of \emph{finite} groups over $k$.

This is all we need to deduce that $kG$ is catenary when $G$ is nilpotent-by-finite and orbitally sound - see Theorem \ref{thm: catenarity in o.s. case}. In order to pass from orbitally sound to general nilpotent-by-finite groups, we partly develop the theory of \emph{vertices and sources} along the lines of \cite{lorenz}\cite{lp}.

\textbf{Theorem C.} Let $G$ be a nilpotent-by-finite compact $p$-adic analytic group, $P$ a prime ideal of $kG$, $H$ an orbitally sound open normal subgroup of $G$, and $Q$ a minimal prime ideal above $P\cap kH$. Write $N$ for the $G$-isolator \cite[Definition 1.6]{woods-struct-of-G} of $Q^\dagger$, and write $\nabla$ for the subgroup of $G$ containing $N$ defined by $\nabla/N = \Delta(G/N)$. Then $P$ is \emph{induced} from an ideal $L$ of $k\nabla$.\qed

For the precise meaning of \emph{induced} here, see \S 1.2.

Theorem A then follows from Theorems B and C by adapting an argument from \cite{ll}, as follows. Let $G$ be a (not necessarily orbitally sound) nilpotent-by-finite compact $p$-adic analytic group, $k$ a finite field of characteristic $p>2$, and $P$ a faithful prime ideal of $kG$. We already know \cite[Theorem A]{woods-struct-of-G} that $G$ contains an open normal orbitally sound subgroup, which we denote $\nio(G)$. From Theorem C, we may deduce Corollary \ref{cor: faithful primes are induced from a proper open subgroup}: that $P$ is induced from some proper open subgroup $H$ of $G$ containing $\nio(G)$. If $H = \nio(G)$, then we can deduce from Theorem B (as above) that $kH$ is catenary, and now by Lemma \ref{lem: catenary subgroup} we are done. In general, $\nio(G) \leq H < G$, and we may not have equality: but it is easy to see that $[H:\nio(H)] < [G:\nio(G)] < \infty$, and Corollary \ref{cor: main result} establishes Theorem A by induction on the index $[G:\nio(G)]$.

\newpage
\section{Heights of primes and Krull dimension}\label{section: heights}

\subsection{Prime and $G$-prime ideals}

\begin{defn}\label{defn: G-prime}
Let $G$ be a compact $p$-adic analytic group \cite[Definition 8.14]{DDMS}. Suppose the group $G$ acts (continuously) on the ring $R$, and that the ideal $I\lhd R$ is $G$-stable. Then, following \cite[\S 14]{passmanICP}, we will say that $I$ is \emph{$G$-prime} if, whenever $A, B\lhd R$ are $G$-stable ideals and $AB\subseteq I$, then either $A\subseteq I$ or $B\subseteq I$.
\end{defn}

\begin{lem}\label{lem: G-prime stuff}
Let $G$ be a compact $p$-adic analytic group and $H$ a closed normal subgroup.
\begin{itemize}
\item[(i)] If $P$ is a prime ideal of $kG$, then $P\cap kH$ is a $G$-prime ideal of $kH$. If $H$ is open in $G$, then $P$ is a minimal prime ideal above $(P\cap kH)kG$.
\item[(ii)] Let $Q$ be a $G$-prime ideal of $kH$, and $P$ any minimal prime of $kH$ above $Q$. Then $Q = \bigcap_{x\in G} P^x$. Furthermore, the set of minimal primes of $kG$ above $Q$ is $\{P^x | x\in G\}$.
\end{itemize}
\end{lem}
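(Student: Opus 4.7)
The plan is to handle the two parts using classical ring-theoretic arguments in the style of Passman's work on (crossed) group algebras, adapted to the completed setting. The only subtlety is to check that the structural facts I rely on --- noetherianity of $kH$, the fact that $kG$ is a crossed product of $kH$ by the finite group $G/H$ when $H$ is open (and in particular is finitely generated as a $kH$-module), and incomparability of primes in such crossed products --- all transfer to the pseudocompact setting; this is by now well-established.

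For part (i), $P \cap kH$ is evidently a two-sided ideal of $kH$, and is $G$-stable because $H$ is normal in $G$ and $P$ is $G$-stable. To verify $G$-primeness, I suppose $A, B \lhd kH$ are $G$-stable with $AB \subseteq P \cap kH$. Since $A$ is $G$-stable and $G$ normalises $kH$, for each $g \in G$ and $a \in A$ one has $ga = (gag^{-1})g \in A \cdot G$; taking closures gives $kG \cdot A \subseteq A \cdot kG$, so $A \cdot kG$ is a two-sided ideal of $kG$, and similarly $B \cdot kG$. Then $(A \cdot kG)(B \cdot kG) = AB \cdot kG \subseteq P$, and primeness of $P$ forces $A \cdot kG \subseteq P$ or $B \cdot kG \subseteq P$, whence $A \subseteq P \cap kH$ or $B \subseteq P \cap kH$. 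The minimality statement when $H$ is open then follows from incomparability for the crossed product $kG = kH * (G/H)$: any prime $P'$ with $(P \cap kH)kG \subseteq P' \subseteq P$ satisfies $P' \cap kH = P \cap kH$, so $P' = P$ by incomparability.

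For part (ii), noetherianity of $kH$ supplies finitely many minimal primes $P_1, \ldots, P_n$ above $Q$, and $G$ permutes them because $Q$ is $G$-stable. I first observe that $Q$ is semiprime: the nilradical $N$ of $kH/Q$ is $G$-stable (being characteristic) and nilpotent by noetherianity, say $N^m = 0$, and iterated $G$-primeness of the zero ideal in $kH/Q$, applied to the factorisation $N \cdot N^{m-1} \subseteq 0$ and then inductively, forces $N = 0$; hence $Q = \sqrt{Q} = \bigcap_i P_i$.

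The core step is transitivity of the $G$-action on $\{P_1, \ldots, P_n\}$. Suppose for contradiction that the $G$-orbit $\mathcal{O}$ of $P$ is a proper subset; set $M = \bigcap_{P' \in \mathcal{O}} P'$ and $M' = \bigcap_{P_j \notin \mathcal{O}} P_j$, both $G$-stable ideals of $kH$. Then
$$M \cdot M' \;\subseteq\; M \cap M' \;=\; \bigcap_i P_i \;=\; Q,$$
so by $G$-primeness either $M \subseteq Q$ or $M' \subseteq Q$. In the first case, pick any $P_j \notin \mathcal{O}$; then $P_j \supseteq Q = M = \bigcap_x P^x$, so primeness of $P_j$ gives $P_j \supseteq P^x$ for some $x \in G$, and minimality over $Q$ forces $P_j = P^x \in \mathcal{O}$, a contradiction. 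The case $M' \subseteq Q$ is handled symmetrically, using $P$ in place of $P_j$. Hence $\mathcal{O} = \{P_1, \ldots, P_n\}$, which simultaneously yields that $\{P^x : x \in G\}$ is the full list of minimal primes above $Q$ and that $\bigcap_x P^x = \bigcap_i P_i = Q$. I expect the bookkeeping in this transitivity argument to be the main point of care; the rest is routine Passman-style manipulation.
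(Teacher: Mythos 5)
Your argument is correct and is essentially the paper's proof: the paper simply cites Passman (Lemmas 14.1, 14.2 and Theorem 16.2(i) of \emph{Infinite Crossed Products}), and what you have written out --- the two-sidedness of $AkG$ for $G$-stable $A$, incomparability for the finite crossed product $kH*(G/H)$, semiprimeness of $Q$ via the nilradical, and the orbit argument with $M$ and $M'$ --- is precisely the content of those cited results, transferred to the completed setting as the paper implicitly assumes.
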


\begin{proof}
$ $

\begin{itemize}
\item[(i)] The former statement follows from \cite[Lemma 14.1(i)]{passmanICP}, and the latter from \cite[Theorem 16.2(i)]{passmanICP}.
\item[(ii)] This follows from \cite[Lemma 14.2(i)(ii)]{passmanICP}.\qedhere
\end{itemize}
\end{proof}

\begin{defn}\label{defn: G-height}
Let $P$ be a prime ideal of a ring $R$. Then we define the \emph{height} of $P$ to be the greatest integer $h(P) := r$ for which there exists a (finite) chain
\begin{equation}
P_0 \lneq P_1 \lneq \dots \lneq P_r = P\tag{$\dagger$}
\end{equation}
of prime ideals in $R$ (or $\infty$ if no such longest finite chain exists).

Suppose instead that the group $G$ acts on $R$ by automorphisms, and $P$ is a $G$-prime ideal of $R$. Then the \emph{$G$-height} of $P$ is the greatest integer $h_G(P) := r$ for which there exists a chain ($\dagger$) of $G$-prime ideals in $R$ (or $\infty$).

Finally, suppose that the group $G$ acts on $R$ by automorphisms, and $P$ is a $G$-orbital prime ideal of $R$ (i.e. a prime ideal of $R$ with finite orbit under the conjugation action of $G$). Then the \emph{$G$-orbital height} of $P$ is the greatest integer $h_G^{\mathrm{orb}}(P) := r$ for which there exists a chain ($\dagger$) of $G$-orbital prime ideals in $R$ (or $\infty$).
\end{defn}

We note the following immediate consequence of the correspondence of Lemma \ref{lem: G-prime stuff}:

\begin{cor}\label{cor: height and G-height of nearby things}
Let $G$ be a compact $p$-adic analytic group and $H$ an open normal subgroup. Take $P$ a prime ideal of $kG$, and let $Q$ be a minimal prime of $kH$ above $P\cap kH$. Then $h(P) = h_G(P\cap kH) = h(Q)$.\qed
\end{cor}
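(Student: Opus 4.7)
The plan is to establish the two equalities $h(P) = h_G(P\cap kH)$ and $h_G(P\cap kH) = h(Q)$ separately, each using one half of Lemma \ref{lem: G-prime stuff}, together with the fact that $[G:H]$ is finite (since $H$ is open in the compact group $G$). In particular, $kG$ is a crossed product $kH * (G/H)$ by the finite group $G/H$, and every $G$-orbit of prime ideals of $kH$ is finite.

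For the first equality, I would use Lemma \ref{lem: G-prime stuff}(i): a chain $P_0 \lneq \cdots \lneq P_r = P$ of primes in $kG$ contracts to a chain $P_0\cap kH \lneq \cdots \lneq P_r\cap kH = P\cap kH$ of $G$-primes of $kH$, with strictness guaranteed by the standard incomparability theorem for finite crossed products (cf.\ \cite[\S 16]{passmanICP}). Conversely, a chain of $G$-primes ending at $P\cap kH$ lifts by repeated lying-over to a chain of primes in $kG$ ending at $P$, using that Lemma \ref{lem: G-prime stuff}(i) identifies $P$ as a minimal prime above $(P\cap kH) kG$.

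For the second equality, I would use Lemma \ref{lem: G-prime stuff}(ii), which describes each $G$-prime of $kH$ as the (finite) intersection of the minimal primes of $kH$ above it, which form a single $G$-orbit. Given a chain $Q_0 \lneq \cdots \lneq Q_r = P\cap kH$ of $G$-primes, set $Q_r' := Q$ and build a chain of primes $Q_0' \lneq \cdots \lneq Q_r'$ of $kH$ by descent: at each step, $\bigcap_{x\in G} (Q_{i-1}')^x = Q_{i-1} \subseteq Q_i'$, and primality of $Q_i'$ together with finiteness of the $G$-orbit of $Q_{i-1}'$ forces some $G$-conjugate of $Q_{i-1}'$ to lie in $Q_i'$, so we may replace $Q_{i-1}'$ by that conjugate. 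Strictness $Q_{i-1}' \lneq Q_i'$ is automatic: otherwise $Q_{i-1} = \bigcap_x (Q_{i-1}')^x = \bigcap_x (Q_i')^x = Q_i$, contradicting $Q_{i-1} \lneq Q_i$. The reverse direction is symmetric: a chain of primes of $kH$ ending at $Q$ gives a chain of $G$-primes by intersecting over $G$-orbits, with strictness because a strictly comparable pair of primes cannot lie in a common finite $G$-orbit (else iterating conjugation would produce an infinite strict chain in a finite set).

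No serious obstacle arises here: the argument is entirely bookkeeping on top of Lemma \ref{lem: G-prime stuff} and standard results on finite crossed products, with the key quantitative input being the finiteness of $[G:H]$ and hence of all $G$-orbits of primes of $kH$.
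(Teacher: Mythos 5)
Your argument is correct and is exactly the intended one: the paper offers no proof at all, simply declaring the corollary an immediate consequence of the correspondence in Lemma \ref{lem: G-prime stuff}, and your write-up fills in the standard bookkeeping (Incomparability and Lying Over for the finite crossed product $kG = kH*(G/H)$ for the first equality, the bijection between $G$-primes and finite $G$-orbits of primes for the second). The only point worth tightening is the strictness step in the last direction: if $\bigcap_x V_{i-1}^x = \bigcap_x V_i^x$ then $V_{i-1}$ and $V_i$ are both \emph{minimal} primes over that common $G$-prime, hence $G$-conjugate by Lemma \ref{lem: G-prime stuff}(ii), and only then does your finite-orbit iteration argument apply.
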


\subsection{Inducing ideals}

\begin{defn}\label{defn: induced ideals}
Let $H$ be an open (not necessarily normal) subgroup of $G$, and let $L$ be an ideal of $kH$. We define the \emph{induced ideal} $L^G\lhd kG$ to be the largest (two-sided) ideal contained in the right ideal $LkG\underset{r}{\lhd} kG$. In other words, by \cite[2.1]{ll}, $L^G$ is the annihilator of $kG/LkG$ as a right $kG$-module, or by \cite[Lemma 14.4(ii)]{passmanICP},
$$L^G = \bigcap_{g\in G} L^g kG.$$
\end{defn}

\begin{lem}\label{lem: induction of ideals}
Induction of ideals is \emph{transitive}: if $H$ and $K$ are open subgroups of $G$ with $H \leq K \leq G$, and $L\lhd kH$, then $L^G = (L^K)^G$.
\end{lem}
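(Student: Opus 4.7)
The key tool is the intersection formula $I^G = \bigcap_{g \in G} I^g \cdot kG$ recorded in Definition~\ref{defn: induced ideals}, valid for any ideal $I \lhd kH'$ with $H' \leq G$ open. The plan is to apply this formula to both $L^G$ and $(L^K)^G$ and reduce to a reindexing.

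The inclusion $(L^K)^G \subseteq L^G$ is immediate: $L^K \subseteq LkK$ implies $L^K \cdot kG \subseteq LkG$, so the two-sided ideal $(L^K)^G \subseteq L^K kG$ is contained in $LkG$, and hence in the largest such two-sided ideal, namely $L^G$.

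For the reverse, I would apply the intersection formula to $L^K$ inside $kK$ to get $L^K = \bigcap_{x \in K} L^x \cdot kK$. Since conjugation by $g \in G$ is a ring automorphism of $kG$, it commutes with intersection, yielding $(L^K)^g = \bigcap_{x \in K} L^{xg} \cdot kK^g$. The exchange of $\bigcap_x$ with right multiplication by $kG$ is legitimate because $kG$ is free of finite rank as a left $kK^g$-module (as $K^g \leq G$ is open); combined with $kK^g \cdot kG = kG$, this gives
\[
(L^K)^g \cdot kG \;=\; \bigcap_{x \in K} L^{xg} \cdot kG.
\]
Intersecting over $g \in G$,
\[
(L^K)^G \;=\; \bigcap_{g \in G} \bigcap_{x \in K} L^{xg} \cdot kG \;=\; \bigcap_{h \in G} L^h \cdot kG \;=\; L^G,
\]
where the middle equality reindexes: as $(g,x)$ varies over $G \times K$, the product $h := xg$ attains every element of $G$ (take $x = 1$, for instance).

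The argument is essentially formal once the intersection formula is in hand. The only non-routine step is the exchange of $\bigcap$ and $\cdot\, kG$, which rests on the standard fact that intersection commutes with ideal multiplication when the ambient module is free of finite rank; I anticipate no substantive obstacle beyond that.
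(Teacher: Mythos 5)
Your proof is correct, but it takes a different route from the paper. The paper's proof is essentially a one-line reduction: it picks an open normal subgroup $N$ of $G$ inside $H$, writes $kG = kN * \overline{G}$ as a crossed product of the finite group $\overline{G} = G/N$, and then cites the transitivity of induction for crossed products from Lorenz--Passman. You instead give a direct, self-contained computation from the intersection formula $L^G = \bigcap_{g\in G} L^g kG$ (which the paper does record in Definition~\ref{defn: induced ideals}, so you are entitled to it). Your two key steps both check out: the easy inclusion $(L^K)^G \subseteq L^G$ follows from $L^K \subseteq LkK$ and maximality of $L^G$ among two-sided ideals inside $LkG$; and the exchange of $\bigcap_{x}$ with $\cdot\,kG$ is valid because $kG = \bigoplus_i kK^g t_i$ is free as a left $kK^g$-module on (finitely many) coset representatives, so for right ideals $A_x$ of $kK^g$ one has $\bigcap_x (A_x kG) = \bigoplus_i \bigl(\bigcap_x A_x\bigr) t_i = \bigl(\bigcap_x A_x\bigr) kG$. (Note also that all the intersections involved are really finite, since $L^g$ depends only on the coset $Hg$.) The reindexing $\{xg : x\in K,\ g\in G\} = G$ is fine. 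What each approach buys: the paper's argument is shorter and delegates the bookkeeping to an established crossed-product lemma, while yours makes the mechanism transparent and avoids the reduction to the crossed-product setting entirely, at the cost of having to justify the freeness/intersection exchange by hand.
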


\begin{proof}
Let $N$ be an open normal subgroup of $G$ contained in $H$, and write $\overline{(\cdot)}$ to denote the quotient by $N$, so that we have $kG = kN * \overline{G}$ with $\overline{H}\leq \overline{K}\leq \overline{G}$, and we may view $L$ as an ideal of $kN*\overline{H}$. The result now follows from \cite[Lemma 1.2(iii)]{lp}.
\end{proof}

\subsection{Krull dimension}

We recall some facts about Krull dimension, used here in the sense of Gabriel and Rentschler: see \cite[\S 15]{GW}.

\begin{defn}
Let $0\neq M$ be an $R$-module, and fix some ordinal $\alpha$. We define the following notation inductively:
\begin{itemize}
\item $\Kdim(M) = 0$ if $M$ is an Artinian module,
\item $\Kdim(M) \leq \alpha$ if, for every descending chain $$M_0 \geq M_1 \geq M_2 \geq \dots$$ of submodules of $M$, we have $\Kdim(M_i/M_{i+1}) < \alpha$ for all but finitely many $i$.
\end{itemize}
Of course, if there exists some $\alpha$ such that $\Kdim(M)\leq \alpha$, but we do not have $\Kdim(M) \leq \beta$ for any $\beta < \alpha$, then we write $\Kdim(M) = \alpha$.
\end{defn}

\begin{rk}
$\Kdim(M)$ is a measure of complexity of the poset of submodules of $M$.

$\Kdim(M)$ may not be defined for some modules $M$ -- that is, we may not have $\Kdim(M) \leq \alpha$ for any ordinal $\alpha$. However, if $M$ is a noetherian module, then $\Kdim(M)$ is defined \cite[Lemma 15.3]{GW}.
\end{rk}

\begin{defn}
Suppose that $\Kdim(M) = \alpha$. We say that $M$ is \emph{$\alpha$-homogeneous} if $\Kdim(N) = \alpha$ for all nonzero submodules $N$ of $M$.
\end{defn}

\begin{exs}\label{exs: alpha-hgs rings}
$ $

\begin{itemize}
\item[(i)] Nonzero Artinian modules are $0$-homogeneous.
\item[(ii)] Prime rings $R$, as modules over themselves, are $\alpha$-homogeneous (where we set $\alpha$ equal to $\Kdim(R_R)$) \cite[Exercise 15E]{GW}.
\item[(iii)] The property of being $\alpha$-homogeneous is inherited by products \cite[Corollary 15.2]{GW} and (nonzero) submodules (by definition).
\end{itemize}
\end{exs}

We now cite and adapt some standard results on Krull dimension.

\begin{lem}\label{lem: Kdim results}
$ $

\begin{itemize}
\item[(i)] \hspace{-1pt}\cite[1.4(ii)]{ll}
Let the ring $R$ be $\alpha$-homogeneous as a right $R$-module. If $x\in R$ satisfies $\Kdim(R/xR) < \Kdim(R)$, then $x$ is a regular element of $R$.
\item[(ii)] \hspace{-1pt}\cite[Th\'eor\`eme 5.3]{lemonnier}
Suppose $B$ is a finite normalising extension of $A$, and let $M$ be a $B$-module. Then $\Kdim(M_B)$ exists if and only if $\Kdim(M_A)$ does, and if so, then they are equal.
\item[(iii)] \hspace{-1pt}\cite[Exercise 15R]{GW}
If $R$ is a right noetherian subring of a ring $S$ such that $S$ is finitely generated as an $R$-module, and $M$ is a finitely generated $S$-module, then $\Kdim(M_S)\leq \Kdim(M_R)$.\qed
\end{itemize}
\end{lem}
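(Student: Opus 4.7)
The plan is straightforward: each of the three statements in \autoref{lem: Kdim results} is a standard result already in the literature, and the lemma is designed simply to collect them in one place for later use. The ``proof'' in the paper amounts to no more than citing the three references. Nonetheless, below I sketch the underlying ideas to confirm applicability in our context.

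For part (i), the core tool is the essentiality criterion: if $M$ is an $\alpha$-homogeneous module and $N\subseteq M$ is a submodule with $\Kdim(M/N) < \alpha$, then $N$ is essential in $M$ (otherwise some nonzero $N'\subseteq M$ with $N \cap N' = 0$ embeds into $M/N$, forcing $\Kdim(N')<\alpha$, contradicting homogeneity). Applied to $R$ as a right module over itself, this means $xR$ is essential as a right ideal. If $x$ fails to be right regular, then $\mathrm{ann}_r(x)\neq 0$, and essentiality of $xR$ forces $xR \cap \mathrm{ann}_r(x) \neq 0$; unpacking this gives $r\in R$ with $xr\neq 0$ but $x(xr) = 0$, so $\mathrm{ann}_r(x) \subsetneq \mathrm{ann}_r(x^2)$. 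Iterating (each $x^n R$ is essential by the same reasoning, since $\Kdim(R/x^nR)<\alpha$ by the short exact sequences $0\to x^{n-1}R/x^nR \to R/x^nR \to R/x^{n-1}R\to 0$) produces an infinite strictly ascending chain of right annihilators, contradicting noetherianness. A mirror argument handles left regularity.

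For part (ii), Lemonnier's theorem exploits the fact that in a finite normalising extension $B = \sum_i A x_i$ with $x_i A = A x_i$, every right $A$-submodule of $M$ generates a right $B$-submodule of comparable Krull dimension, and every $B$-submodule of $M$ is in particular an $A$-submodule; this lets us translate descending chains in either direction and conclude $\Kdim(M_A) = \Kdim(M_B)$. For part (iii), the inequality follows from the observation that every descending chain of right $S$-submodules of $M$ is in particular a descending chain of right $R$-submodules, combined with a transfinite induction on $\alpha$ in the definition of $\Kdim$ (using that finite generation of $S$ over $R$ and of $M$ over $S$ forces $M_R$ to be finitely generated, so $\Kdim(M_R)$ exists whenever $\Kdim(M_S)$ does).

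The main obstacle here is negligible, since all three statements are standard; the only point of care is to verify that the conventions in the cited sources (right vs.\ left modules, normalising vs.\ other finite-extension conditions) match our intended applications in the completed group ring setting. In particular, for later use of (ii) and (iii) we will need to know that $kG$ is a finite normalising extension of $kH$ for $H$ an open normal subgroup of finite index, which is standard.
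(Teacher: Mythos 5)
Your proposal matches the paper exactly: the paper offers no proof of this lemma beyond the three citations (Letzter--Lorenz 1.4(ii), Lemonnier Th\'eor\`eme 5.3, and Goodearl--Warfield Exercise 15R), which is precisely what you do, and your supplementary sketches of the standard arguments are essentially sound. The only loose points -- the appeal to noetherianness (not in the statement, though ACC on right annihilators holds in any ring with Krull dimension, and all applications here are noetherian) and the glossed ``mirror argument'' for left regularity (which really follows from $xR\subseteq\mathrm{ann}_r(y)$ forcing $\Kdim(yR)<\alpha$ against homogeneity) -- are immaterial since the results are cited rather than reproved.
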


\begin{cor}\label{cor: Kdims are equal}
Suppose $A\subseteq C\subseteq B$ are right noetherian rings, and $B$ is a finite normalising extension of $A$. Let $M$ be a finitely generated $B$-module. Then, if $\Kdim(M_B)$ exists, we have
$$\Kdim(M_A) = \Kdim(M_C) = \Kdim(M_B).$$
\end{cor}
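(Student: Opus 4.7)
The plan is to combine parts (ii) and (iii) of Lemma \ref{lem: Kdim results} with the elementary fact that $\Kdim(M_C) \leq \Kdim(M_A)$ whenever $A \subseteq C$ are subrings and $M$ is a $C$-module.

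First, since $A \subseteq B$ is a finite normalising extension and $\Kdim(M_B)$ exists, Lemma \ref{lem: Kdim results}(ii) applied directly to this extension yields that $\Kdim(M_A)$ exists and equals $\Kdim(M_B)$.

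Next, because $B$ is finitely generated as a right $A$-module (by normalising generators) and $A \subseteq C$, $B$ is \emph{a fortiori} finitely generated as a right $C$-module; consequently $M$ is a finitely generated $C$-module. Applying Lemma \ref{lem: Kdim results}(iii) with the pair $C \subseteq B$ then gives $\Kdim(M_B) \leq \Kdim(M_C)$.

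For the reverse inequality $\Kdim(M_C) \leq \Kdim(M_A)$, I would argue by transfinite induction on $\alpha := \Kdim(M_A)$, carried out simultaneously over all modules: every $C$-submodule of $M$ is in particular an $A$-submodule, and for $C$-submodules $N' \leq N$ the quotient $N/N'$ has the same underlying abelian group whether viewed over $C$ or over $A$. Hence any descending chain of $C$-submodules is a descending chain of $A$-submodules with the same successive quotients, and the inductive hypothesis applied to those quotients bounds $\Kdim(M_C)$ by $\alpha$. Chaining the three inequalities gives $\Kdim(M_B) \leq \Kdim(M_C) \leq \Kdim(M_A) = \Kdim(M_B)$, forcing equality throughout.

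There is no real obstacle here; the only mildly delicate point is the transfinite induction underlying the inequality $\Kdim(M_C) \leq \Kdim(M_A)$, but it follows immediately from the definition of Krull dimension, so no new ideas are required.
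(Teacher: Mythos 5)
Your proof is correct and follows the paper's route: the paper also sandwiches $\Kdim(M_C)$ between $\Kdim(M_A)$ and $\Kdim(M_B)$, which are equal by Lemma \ref{lem: Kdim results}(ii). The only divergence is in how the inequality $\Kdim(M_C)\leq\Kdim(M_A)$ is obtained: the paper applies Lemma \ref{lem: Kdim results}(iii) a second time to the pair $A\subseteq C$ (which is legitimate because $C$, being an $A$-submodule of the finitely generated module $B_A$ over the right noetherian ring $A$, is itself a finitely generated $A$-module), whereas you prove it directly from the definition via the observation that every $C$-submodule is an $A$-submodule. Your direct argument is sound (it is the standard fact that Krull dimension can only drop when passing to a larger submodule lattice) and is marginally more general, since it needs no finiteness or noetherianity hypotheses on $A\subseteq C$; the paper's version is shorter given that Lemma \ref{lem: Kdim results}(iii) is already on hand.
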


\begin{proof}
This follows immediately from Lemma \ref{lem: Kdim results}(ii) and two applications of Lemma \ref{lem: Kdim results}(iii).
\end{proof}

\begin{lem}\label{lem: Kdim and moving modules around}
Let $G$ be a compact $p$-adic analytic group, $H$ an open subgroup of $G$, and $k$ a field of characteristic $p$. Let $M$ be a finitely generated $kG$-module.
\begin{itemize}
\item[(i)] $\Kdim(M_{kG}) = \Kdim(M_{kH})$.
\item[(ii)] Suppose that $M = WkG$ for some submodule $W$ of $M_{kH}$. Then we have $\Kdim(M_{kG}) = \Kdim(W_{kH})$.
\item[(iii)] $M_{kG}$ is $\alpha$-homogeneous if and only if $M_{kH}$ is $\alpha$-homogeneous.
\end{itemize}
\end{lem}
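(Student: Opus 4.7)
The plan is to reduce all three parts to the finite normalising extension machinery of Lemma \ref{lem: Kdim results} and Corollary \ref{cor: Kdims are equal}, by passing to a common open normal subgroup. Let $N$ be the normal core of $H$ in $G$, i.e.\ the intersection of the finitely many $G$-conjugates of $H$; this $N$ is open and normal in $G$ with $N\leq H$, and both $kH$ and $kG$ are finite normalising extensions of $kN$ because $N\lhd G$ and $[G:N]<\infty$. Part (i) is then immediate from Corollary \ref{cor: Kdims are equal} applied to $kN\subseteq kH\subseteq kG$, once we know (which is part of the hypothesis that the dimension is being computed) that $\Kdim(M_{kG})$ exists.

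For part (ii), by (i) it suffices to show $\Kdim(M_{kN})=\Kdim(W_{kN})$. Choose coset representatives $g_1,\dots,g_n$ for $N$ in $G$. Since $M=WkG$ and $W$ is a $kN$-submodule, we have $M=\sum_{i=1}^n Wg_i$. Using normality of $N$ (so that $g_ikN=kNg_i$), each $Wg_i$ is a $kN$-submodule of $M$, and the map $w\mapsto wg_i$ identifies $Wg_i$ with the $g_i$-conjugate $kN$-module $W^{g_i}$; in particular $\Kdim((Wg_i)_{kN})=\Kdim(W_{kN})$. A standard induction on $n$ using the filtration $0\subseteq X\subseteq X+Y$ (whose factors are $X$ and a quotient of $Y$) shows that the Krull dimension of a finite sum of submodules is the maximum of their Krull dimensions, giving the required equality.

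For part (iii), both directions follow mechanically from (i) and (ii). If $M_{kH}$ is $\alpha$-homogeneous, then any nonzero $kG$-submodule $M'\subseteq M$ is also a nonzero $kH$-submodule, so (i) gives $\Kdim(M'_{kG})=\Kdim(M'_{kH})=\alpha$. Conversely, if $M_{kG}$ is $\alpha$-homogeneous and $W$ is any nonzero $kH$-submodule of $M$, then $WkG$ is a nonzero $kG$-submodule with $\Kdim((WkG)_{kG})=\alpha$, and (ii) applied to $WkG=W\cdot kG$ yields $\Kdim(W_{kH})=\alpha$. Overall I do not expect any real obstacle: everything is bookkeeping once the normal core is in play, the only mildly delicate point being the identification of $Wg_i$ with a conjugate $kN$-module in part (ii), which genuinely uses the normality of $N$ (not merely openness of $H$) in $G$.
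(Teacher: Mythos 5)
Your proposal is correct and follows essentially the same route as the paper: pass to the normal core $N$ of $H$ in $G$, apply Corollary \ref{cor: Kdims are equal} for (i), write $M=\sum_i Wg_i$ as a sum of conjugate $kN$-modules of equal Krull dimension for (ii), and deduce (iii) from (i) and (ii). Your only addition is to make explicit the (standard) fact that the Krull dimension of a finite sum of submodules is the maximum of their Krull dimensions, which the paper leaves implicit.
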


\begin{proof}
(Adapted from \cite[1.4(iii)-(v)]{ll}.)

\begin{itemize}
\item[(i)] 
Let $N$ be the (open) largest normal subgroup of $G$ contained in $H$, so that $kG$ is a finite normalising extension of $kN$. Now apply Corollary \ref{cor: Kdims are equal}.
\item[(ii)] Let $N$ be as in (i). Then, by (i), it suffices to prove that $\Kdim(M_{kN}) = \Kdim(W_{kN})$. But, as a $kN$-module, $M$ is a finite sum of modules $(Wg)_{kN}$ for various $g\in G$, and these are all isomorphic, so in particular have isomorphic submodule lattices and therefore the same $\Kdim$.
\item[(iii)] It is clear from the definition that, if $M_{kH}$ is $\alpha$-homogeneous, then $M_{kG}$ is $\alpha$-homogeneous. Conversely, suppose that $M_{kG}$ is $\alpha$-homogeneous, and let $W$ be a nonzero submodule of $M_{kH}$. Then $(WkG)_{kG}$ is a nonzero submodule of $M_{kG}$, so has Krull dimension $\alpha$ by assumption, and hence also $\Kdim(W_{kH}) = \alpha$ by (ii).\qedhere
\end{itemize}
\end{proof}

\begin{lem}\label{lem: brown}
Let $G$ be a finite group, $H$ a subgroup, and $R*G$ a fixed crossed product. Fix a semiprime ideal $I$ of $R*G$. If $R*G/I$ is $\alpha$-homogeneous, then $R*H/(I\cap R*H)$ is $\alpha$-homogeneous.
\end{lem}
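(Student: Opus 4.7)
The plan is to reduce everything to comparing Krull dimensions as modules over the subrings $R{*}N \subseteq R{*}H \subseteq R{*}G$, where $N$ is the normal core of $H$ in $G$. The initial observation is that the ring inclusion $R{*}H \hookrightarrow R{*}G$ descends to a right $R{*}H$-module embedding
$$R{*}H/(I \cap R{*}H) \hookrightarrow R{*}G/I.$$
So it is enough to show that $R{*}G/I$ is $\alpha$-homogeneous as a right $R{*}H$-module: any nonzero right $R{*}H$-submodule of $R{*}H/(I\cap R{*}H)$ is then a nonzero right $R{*}H$-submodule of $R{*}G/I$, and hence has Krull dimension $\alpha$.

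To pass from an $R{*}G$-module statement to an $R{*}H$-module statement, I would set $N$ to be the normal core of $H$ in $G$. Then $N \lhd G$, and $R{*}G$ is a finite normalising extension of $R{*}N$ (coset representatives of $N$ in $G$ lift to normalising elements of $R{*}N$ in $R{*}G$). Corollary~\ref{cor: Kdims are equal}, applied to $R{*}N \subseteq R{*}H \subseteq R{*}G$, then identifies Krull dimensions of finitely generated $R{*}G$-modules computed over any of these three rings. In particular $\Kdim((R{*}G/I)_{R{*}H}) = \Kdim((R{*}G/I)_{R{*}G}) = \alpha$. Now take an arbitrary nonzero right $R{*}H$-submodule $W$ of $R{*}G/I$; I must verify $\Kdim(W_{R{*}H}) = \alpha$. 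The module $W\cdot(R{*}G)$ is a nonzero right $R{*}G$-submodule of the $\alpha$-homogeneous module $R{*}G/I$, so has Krull dimension $\alpha$ over $R{*}G$ by hypothesis. Next I would adapt the proof of Lemma~\ref{lem: Kdim and moving modules around}(ii) to the crossed-product setting: writing $W\cdot R{*}G = \sum_i W\bar{g}_i$ over right coset representatives $\{g_i\}$ of $H$ in $G$, each summand $W\bar{g}_i$ is isomorphic as a right $R{*}N$-module to $W$ with its $R{*}N$-action twisted by conjugation by $\bar{g}_i$ (which preserves $R{*}N$ precisely because $N$ is normal in $G$). The twist induces a bijection of submodule lattices, so $W$ and $W\bar{g}_i$ have equal Krull dimension over $R{*}N$. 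This gives $\Kdim((W\cdot R{*}G)_{R{*}N}) = \Kdim(W_{R{*}N})$, and one last application of Corollary~\ref{cor: Kdims are equal} yields $\Kdim(W_{R{*}H}) = \Kdim(W_{R{*}N}) = \alpha$, as required.

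The main obstacle is bookkeeping in the crossed product rather than any deep new input: the isomorphism $W\bar{g}_i \cong W^{\bar{g}_i}$ of $R{*}N$-modules, and the verification that taking the normal core is compatible with the finite normalising extension hypothesis, must be written out carefully to see that the original group-ring argument really does transfer. I would also remark that the semiprimeness hypothesis on $I$ plays no visible role in the argument above; presumably it is included because the lemma is only used in a semiprime setting later in the paper.
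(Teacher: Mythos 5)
Your proof is correct, and it shares the paper's basic mechanism (the embedding of $R*H/(I\cap R*H)$ into $R*G/I$ plus Lemonnier's theorem, Lemma \ref{lem: Kdim results}(ii), for finite normalising extensions), but the route is genuinely different in its choice of base ring. The paper drops all the way down to the coefficient ring $R$: since $G$ is finite, \emph{both} $R*H$ and $R*G$ are finite normalising extensions of $R$, with no normality assumption needed, so for a nonzero right ideal $M$ of $R*H/(I\cap R*H)$ one gets $\Kdim(M_{R*H})=\Kdim(M_R)\leq\Kdim((R*G/I)_R)=\Kdim((R*G/I)_{R*G})=\alpha$ directly from the $R$-module embedding, with equality forced by $\alpha$-homogeneity of $(R*G/I)_R$. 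You instead introduce the normal core $N$ of $H$ and prove that $R*G/I$ is $\alpha$-homogeneous as an $R*H$-module, via $W\cdot(R*G)=\sum_i W\bar g_i$ and the twisted isomorphisms $W\bar g_i\cong W$ over $R*N$; this is where you pay for not using $R$ itself, since you need $N\lhd G$ to make conjugation by $\bar g_i$ preserve $R*N$ (taking $N=1$ would recover the paper's setting). What your version buys is that the homogeneity-transfer step is written out in full: the paper's final assertion that $(R*G/I)_R$ is $\alpha$-homogeneous is attributed only to Corollary \ref{cor: Kdims are equal}, which literally gives equality of Krull dimensions for the whole module rather than for arbitrary $R$-submodules, so an argument exactly like your coset-twisting one (or Lemma \ref{lem: Kdim and moving modules around}(ii),(iii)) is implicitly needed there anyway. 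Two small remarks: your first appeal to Corollary \ref{cor: Kdims are equal} to compute $\Kdim((R*G/I)_{R*H})$ is redundant (your homogeneity argument applied to $W=R*G/I$ already gives it), and for the final step it is cleaner to quote Lemma \ref{lem: Kdim results}(ii) for $R*N\subseteq R*H$ directly, since the Corollary carries noetherian and finite-generation hypotheses that are not needed and are not part of the lemma's statement. Your observation that semiprimeness of $I$ is never used is also accurate; it is only relevant to how the lemma is applied later.
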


\begin{proof}
(Adapted from \cite[Lemma 4.2(i)]{brown}.)
Let $M$ be a nonzero right ideal of the ring $R*H/(I\cap R*H)$, and write $\beta = \Kdim(M_{R*H})$. We wish to show that $\beta = \alpha$.

$M$ is a right module over both $R*H$ and $R$; and $R*G/I$ is a right module over both $R*G$ and $R$. As $R*G$ and $R*H$ are both finite normalising extensions of $R$, we may apply Lemma \ref{lem: Kdim results}(ii) to both of these situations to see that
$$\beta = \Kdim(M_{R*H}) = \Kdim(M_{R})$$
and
$$\alpha = \Kdim((R*G/I)_{R*G}) = \Kdim((R*G/I)_{R}).$$
Now, as right $R$-modules, we have
$$R*H/(I\cap R*H) \cong (R*H + I)/I \leq R*G/I,$$
and so $M$ is isomorphic to some nonzero $R$-submodule of $R*G/I$. In particular, this means that
$$\beta = \Kdim(M_R) \leq \Kdim((R*G/I)_{R}) = \alpha.$$
But now $(R*G/I)_R$ is $\alpha$-homogeneous by Corollary \ref{cor: Kdims are equal}, so we must have $\beta = \alpha$.
\end{proof}

\begin{cor}\label{cor: related rings are all alpha-homogeneous}
Let $G$ be a compact $p$-adic analytic group, $H$ be an open subgroup of $G$, and $N$ the largest open normal subgroup of $G$ contained in $H$. Take $k$ to be a field of characteristic $p$, and let $Q$ be a prime ideal of $kH$, $I = Q^G\cap kN$, and $\alpha = \Kdim(kH/Q)$. Then $kH/Q$, $kG/Q^G$, $kG/IkG$ are all $\alpha$-homogeneous rings.
\end{cor}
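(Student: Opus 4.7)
The plan is to build upward from $kH/Q$ --- which is $\alpha$-homogeneous by Example \ref{exs: alpha-hgs rings}(ii), being a prime ring of Krull dimension $\alpha$ --- to the other two quotients by using the crossed-product structure $kG = kN*(G/N)$ together with a Mackey-type analysis of induced modules restricted to $kN$.

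First I would show, for each $g\in G$, that the right $kG$-module $kG/Q^g kG$ is $\alpha$-homogeneous. The starting point is the right $kG$-module isomorphism
\[
kG/Q^g kG \;\cong\; (kH^g/Q^g) \otimes_{kH^g} kG,
\]
which comes from the fact that $kG$ is a free left $kH^g$-module of finite rank, on a set of right coset representatives $t_1,\dots,t_r$ of $H^g$ in $G$. Restricting the right $kG$-action to $kN$ (permissible since $N\leq H^g$ by normality of $N$ in $G$) decomposes this as a direct sum of $r$ copies of $kH^g/Q^g$, in which the $kN$-action on the $i$-th summand is the natural right action twisted by the ring automorphism $n\mapsto t_i n t_i^{-1}$ of $kN$. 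Such a twist preserves the lattice of $kN$-submodules, so each summand inherits $\alpha$-homogeneity from $(kH^g/Q^g)_{kN}$ (itself $\alpha$-homogeneous by Example \ref{exs: alpha-hgs rings}(ii) applied to the prime $kH^g/Q^g$, and Lemma \ref{lem: Kdim and moving modules around}(iii) applied to $N\leq H^g$). Example \ref{exs: alpha-hgs rings}(iii) then handles the finite direct sum, and Lemma \ref{lem: Kdim and moving modules around}(iii) (now with $N$ open in $G$) promotes $\alpha$-homogeneity from the $kN$-level to the $kG$-level. Since $H$ is open in $G$, $Q^G$ is a finite intersection $\bigcap_g Q^g kG$, so $kG/Q^G$ embeds as a right $kG$-submodule of $\bigoplus_g kG/Q^g kG$ and is $\alpha$-homogeneous by Example \ref{exs: alpha-hgs rings}(iii).

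For $kG/IkG$, the same tensor identification shows that the image of $kN$ inside $kG/Q^g kG$ is precisely $kN/(Q^g\cap kN)$, which forces $Q^g kG\cap kN = Q^g\cap kN = (Q\cap kN)^g$ and hence $I = \bigcap_g (Q\cap kN)^g$. The inclusion $kN/(Q\cap kN)\hookrightarrow (kH/Q)_{kN}$ makes $kN/(Q\cap kN)$ into an $\alpha$-homogeneous right $kN$-module, and the same ``twist by an automorphism'' argument makes each $kN/(Q\cap kN)^g$ into one; so $kN/I$ is $\alpha$-homogeneous as a $kN$-submodule of $\bigoplus_g kN/(Q\cap kN)^g$. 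Finally, because $I$ is $G$-stable, $IkG$ is a two-sided ideal of $kG$ and $kG/IkG\cong (kN/I)*(G/N)$ as a crossed product; as a right $kN$-module this is a direct sum of $[G:N]$ copies of $kN/I$, hence $\alpha$-homogeneous, and one further application of Lemma \ref{lem: Kdim and moving modules around}(iii) delivers $\alpha$-homogeneity of $(kG/IkG)_{kG}$.

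The main technical obstacle is setting up the first step cleanly: exhibiting the induced-module isomorphism and tracking how the $kN$-action on each summand of its $kN$-restriction is twisted by an inner automorphism of $kN$. Once this Mackey-type picture is in place, everything else --- intersecting over $G$-conjugates, and promoting $\alpha$-homogeneity from $kN/I$ to $(kN/I)*(G/N)$ --- is essentially formal, modulo the identity $Q^g kG\cap kN = Q^g \cap kN$ extracted from the tensor-product description (much cleaner than a direct calculation inside the crossed product $kN*(G/N)$).
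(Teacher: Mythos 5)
Your proof is correct and follows essentially the same route as the paper's: both arguments reduce everything to finite (sub)direct sums of conjugate-twisted copies of $(kH/Q)_{kN}$ and $(kN/(Q\cap kN))_{kN}$, via the identity $Q^G\cap kN=\bigcap_{g}(Q\cap kN)^g$, and then pass between $kN$-homogeneity and $kG$-homogeneity. The only difference is presentational: where you unwind the Mackey-type decomposition explicitly, the paper packages the same steps into Lemma \ref{lem: Kdim and moving modules around}(ii),(iii) and Lemma \ref{lem: brown}, whose proof is exactly your embedding $kN/(Q\cap kN)\hookrightarrow(kH/Q)_{kN}$.
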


\begin{proof}
As we observed in Example \ref{exs: alpha-hgs rings}(ii), $kH/Q$ is already $\alpha$-homogeneous, as it is prime of Krull dimension $\alpha$.

We know from Definition \ref{defn: induced ideals} that the ideal $Q^G$ can be written as $\bigcap_{g\in G} Q^gkG$, and that this intersection can be taken to be finite. Hence, as a right $kG$-module, $kG/Q^G$ is isomorphic to a (nonzero) submodule of the direct product of the various (finitely many) $kG/Q^gkG$; and each $kG/Q^gkG$ is generated as a $kG$-module by $kH^g/Q^g$, which is ring-isomorphic to $kH/Q$. Hence $\Kdim(kG/Q^G) = \Kdim(kH/Q)$ by Lemma \ref{lem: Kdim results}(ii).

Finally, as $Q^G = \bigcap_{g\in G} (QkG)^g$, we see that
$$I = \bigcap_{g\in G} (QkG)^g\cap kN = \bigcap_{g\in G} (QkG\cap kN)^g = \bigcap_{g\in G} (Q\cap kN)^g,$$
and so, as above, $kN/I$ is a (nonzero) subdirect product of the various $kN/(Q\cap kN)^g$, which are all ring-isomorphic to $kN/Q\cap kN$; now Lemma \ref{lem: brown} implies that $kN/Q\cap kN$ is $\alpha$-homogeneous, so $kN/I$ is also, and $kG/IkG$ is generated as a $kG$-module by $kN/I$, so finally $kG/IkG$ also inherits this property.
\end{proof}

We borrow a result from the standard proof of Goldie's theorem.

\begin{lem}\label{lem: swan}
\cite[Lemma 3.13]{swan}
Suppose $R$ is a semiprime ring, satisfying the ascending chain condition on right annihilators of elements, and which does not contain an infinite direct sum of nonzero right ideals. If $I$ is an \emph{essential} right ideal of $R$ (i.e. a right ideal that has nonzero intersection $I\cap J$ with each nonzero right ideal $J$ of $R$), then $I$ contains a regular element.\qed
\end{lem}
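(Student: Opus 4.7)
The statement is a classical component of Goldie's theorem; my plan is the standard inductive extraction of a regular element from an essential right ideal, combining both hypotheses on $R$.

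Using the ACC on right annihilators of elements, I would pick $c_1 \in I$ so that the right annihilator $r.\text{ann}(c_1) := \{r \in R : c_1 r = 0\}$ is maximal in the family $\{r.\text{ann}(x) : 0 \neq x \in I\}$. A standard Goldie-type lemma valid in semiprime rings with this ACC then gives $r.\text{ann}(c_1) = r.\text{ann}(c_1^2)$. If $r.\text{ann}(c_1) \cap I \neq 0$, I would repeat the construction inside $r.\text{ann}(c_1) \cap I$ to produce $c_2$, then inside $r.\text{ann}(c_1) \cap r.\text{ann}(c_2) \cap I$ to produce $c_3$, and so on. The no-infinite-direct-sum hypothesis on $R$ forces this iteration to terminate after finitely many steps, producing $c_1, \ldots, c_n \in I$ with $\bigcap_i r.\text{ann}(c_i) \cap I = 0$.

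Setting $c = c_1 + \cdots + c_n$, the key computation is that $c_1 R, \ldots, c_n R$ form an independent family of right ideals: left-multiplying a dependence relation $\sum c_i r_i = 0$ by $c_1$ and using $c_j \in r.\text{ann}(c_1)$ for $j > 1$ yields $c_1^2 r_1 = 0$, so $c_1 r_1 = 0$ by the identity $r.\text{ann}(c_1) = r.\text{ann}(c_1^2)$; iterating gives each $c_i r_i = 0$. This independence implies $r.\text{ann}(c) = \bigcap_i r.\text{ann}(c_i)$. Since this intersection meets $I$ trivially and $I$ is essential, $r.\text{ann}(c) = 0$. A standard left-right symmetric argument in semiprime rings with the two chain-type hypotheses then upgrades right regularity to regularity, so $c$ is the desired regular element of $I$.

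The main obstacle is establishing and correctly deploying the identity $r.\text{ann}(c_i) = r.\text{ann}(c_i^2)$, which is where semiprimeness and the ACC on right annihilators enter in tandem: without it, one cannot rule out that the iteration secretly produces elements with $c_i^2 = 0$, and the independence of the $c_i R$ breaks down. Once this identity is in hand, the direct-sum argument and the annihilator computation reduce to bookkeeping, and the final upgrade from right regular to regular is a routine application of the Goldie framework.
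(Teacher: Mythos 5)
The paper does not actually prove this lemma --- it is quoted from Swan as a standard ingredient of Goldie's theorem --- so the question is whether your reconstruction is sound. Your overall architecture is the standard one and is fine: produce $c_1, c_2, \dots$ with $c_{i+1}$ a suitable nonzero element of $\bigcap_{j\le i}\operatorname{r.ann}(c_j)\cap I$, check that $c_1R + \dots + c_nR$ is direct (your left-multiplication computation is correct, given the identities $\operatorname{r.ann}(c_i)=\operatorname{r.ann}(c_i^2)$), use the no-infinite-direct-sum hypothesis to terminate, and conclude that $c=c_1+\dots+c_n$ has $\operatorname{r.ann}(c)\cap I=0$, hence $\operatorname{r.ann}(c)=0$ by essentiality. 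The genuine gap is in the step you yourself flag as the crux: choosing $c_i$ with $\operatorname{r.ann}(c_i)$ maximal among right annihilators of nonzero elements of the relevant right ideal does \emph{not} yield $\operatorname{r.ann}(c_i)=\operatorname{r.ann}(c_i^2)$. Already in $R=M_2(k)$ the element $e_{12}$ has right annihilator maximal among right annihilators of nonzero elements, yet $e_{12}^2=0$, so $\operatorname{r.ann}(e_{12}^2)=R\supsetneq\operatorname{r.ann}(e_{12})$. Maximality gives the identity only when $c_i^2\neq 0$, and ruling out that the selection lands on a square-zero element is precisely the nontrivial content. The standard repair is different: one first proves that a semiprime ring with the ACC on right annihilators has no nonzero nil right ideals; then the right ideal $\bigcap_{j\le i}\operatorname{r.ann}(c_j)\cap I$ contains a non-nilpotent element $a$, the chain $\operatorname{r.ann}(a)\subseteq\operatorname{r.ann}(a^2)\subseteq\cdots$ stabilises by the ACC, and $c_i:=a^n$ for large $n$ is a nonzero element with $\operatorname{r.ann}(c_i)=\operatorname{r.ann}(c_i^2)$. (Textbook treatments, e.g.\ Goodearl--Warfield or McConnell--Robson, instead route through the vanishing of the right singular ideal together with finite uniform dimension.) Without one of these inputs the induction can stall on square-zero elements and the directness computation collapses.

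A secondary point: the final upgrade from $\operatorname{r.ann}(c)=0$ to $c$ regular is not a ``left--right symmetric argument,'' since the hypotheses are purely right-sided. The standard argument is: finite right uniform dimension together with $\operatorname{r.ann}(c)=0$ forces $cR$ to be an essential right ideal; then any $x$ with $xc=0$ annihilates $cR$, so $\operatorname{r.ann}(x)$ is essential, i.e.\ $x$ lies in the right singular ideal, which vanishes for semiprime rings satisfying your two chain hypotheses. So this step again leans on the lemma $Z(R_R)=0$, which has genuine content and should be proved or cited rather than dismissed as routine.
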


These hypotheses are satisfied when $R$ is $G$-prime and noetherian, for example.

\begin{propn}\label{propn: P minimal over Q implies heights are equal}
With notation as in Corollary \ref{cor: related rings are all alpha-homogeneous}, suppose $P$ is a prime ideal of $kG$ containing $Q^G$. If $P$ is minimal over $Q^G$, then $h(P) = h(Q)$.
\end{propn}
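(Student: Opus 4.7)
The plan is to pull the heights of both $P$ and $Q$ down to primes of $kN$ via Corollary~\ref{cor: height and G-height of nearby things}, then identify corresponding minimal primes via the $G$-orbit structure over the $G$-prime $I$. Since $N$ is open and normal in both $G$ and $H$, two applications of Corollary~\ref{cor: height and G-height of nearby things} give $h(Q) = h(Q_0)$ and $h(P) = h(P_0)$, where $Q_0$ (respectively $P_0$) denotes any minimal prime of $kN$ above $Q \cap kN$ (respectively $P \cap kN$). Because conjugation by elements of $G$ preserves heights, it will suffice to exhibit some choice of $P_0$ that is a $G$-conjugate of some choice of $Q_0$.

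From the computation inside the proof of Corollary~\ref{cor: related rings are all alpha-homogeneous}, $I = \bigcap_{g \in G}(Q \cap kN)^g$. Since $Q \cap kN$ is $H$-prime with minimal primes forming a single $H$-orbit, this rewrites as $I = \bigcap_{g \in G} Q_0^g$, and Lemma~\ref{lem: G-prime stuff}(ii) identifies $I$ as a $G$-prime of $kN$ whose minimal primes form the single $G$-orbit $\{Q_0^g : g \in G\}$. The key claim is therefore $P \cap kN = I$: granting it, any minimal prime above $P \cap kN$ is a $G$-conjugate of $Q_0$, so we may take $P_0 = Q_0$ and conclude $h(P) = h(Q_0) = h(Q)$.

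For the key claim, the inclusion $P \cap kN \supseteq I$ is immediate from $P \supseteq Q^G$. For the reverse, the plan is to show that $P$ is in fact a minimal prime of $kG$ above $IkG$; once this is established, the crossed-product structure $kG/IkG \cong (kN/I) * (G/N)$, combined with the fact that $I$ is $G$-prime (so that $G/N$ acts transitively on the minimal primes of the semiprime ring $kN/I$), forces every minimal prime of $kG$ above $IkG$ to have intersection $I$ with $kN$, and in particular $P \cap kN = I$. The main obstacle is therefore to show minimality of $P$ above $IkG$; I would approach this by establishing that the minimal primes of $kG$ above $IkG$ and those above $Q^G$ coincide, equivalently $\sqrt{IkG} = \sqrt{Q^G}$. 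This radical computation should follow from the $\alpha$-homogeneity of both $kG/IkG$ and $kG/Q^G$ provided by Corollary~\ref{cor: related rings are all alpha-homogeneous}, together with a careful analysis of $Q^G$ modulo $IkG$ inside the aforementioned crossed product.
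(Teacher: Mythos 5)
Your reduction of the proposition to the single claim $P\cap kN = I$ (where $I = Q^G\cap kN$) is exactly the paper's strategy, and your endgame is sound: once $P\cap kN = \bigcap_{g\in G} Q_0^g$, the minimal primes of $kN$ above $P\cap kN$ and above $Q\cap kN$ are conjugate, and Corollary~\ref{cor: height and G-height of nearby things} (applied to $N\lhd G$ and to $N\lhd H$) gives $h(P)=h(Q_0)=h(Q)$. The identification of the minimal primes over $IkG$ as precisely the primes contracting to $I$ is also fine.

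The gap is in your proof of the key claim. You reduce it to the assertion $\sqrt{IkG}=\sqrt{Q^G}$ and say this ``should follow from the $\alpha$-homogeneity \dots together with a careful analysis,'' but this radical equality is essentially a restatement of the claim itself: granting the paper's proposition, $\mathrm{Min}(Q^G)$ is a $G$-stable nonempty subset of the single $G$-orbit $\mathrm{Min}(IkG)$, hence equal to it, and conversely the radical equality immediately yields $P\cap kN=I$. So nothing has been gained, and $\alpha$-homogeneity alone does not deliver it --- the inclusion $\sqrt{IkG}\subseteq\sqrt{Q^G}$ is trivial, but the reverse inclusion requires showing that $Q^G/IkG$ lies in every minimal prime of $kG/IkG$, which is not a formal consequence of the two rings having the same Krull dimension. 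The substantive mechanism in the paper is: assuming $I\lneq P\cap kN$, one shows $(P\cap kN)/I$ is an \emph{essential} right ideal of $kN/I$ (using that its left annihilator is a $G$-stable ideal killing a nonzero $G$-stable ideal of the $G$-prime ring $kN/I$), extracts via Lemma~\ref{lem: swan} an element $c\in P\cap kN$ regular modulo $I$, upgrades this to regularity of $c$ modulo $Q^G$ using the $\alpha$-homogeneity of $kG/IkG$ and $kG/Q^G$ together with Lemma~\ref{lem: Kdim results}(i), and then derives a contradiction with the minimality of $P$ over $Q^G$ by a reduced rank (principal ideal theorem style) argument. Some version of this regular-element-plus-reduced-rank step is unavoidable, and it is precisely what your ``careful analysis'' placeholder would have to contain.
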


\begin{proof}
First, set $I = Q^G\cap kN$. This is a $G$-prime ideal contained in $P\cap kN$.

Suppose for contradiction that the inclusion $I\subseteq P\cap kN$ is strict.

First, we will show that $P\cap kN/I$ is essential as a right ideal inside $kN/I$. Indeed, the left annihilator $L$ in $kN/I$ of $P\cap kN/I$ is a $G$-invariant ideal which annihilates the nonzero $G$-invariant ideal $P\cap kN/I$, so we must have $L = 0$; and so, given any right ideal $T$ of $kN/I$ having zero intersection with $P\cap kN/I$, as we must have $T\leq L$, we conclude that $T = 0$.

Hence, by Lemma \ref{lem: swan}, we may find an element $c\in P\cap kN\subseteq kN$ which is regular modulo $I$. As $kG/IkG$ is a free $kN/I$-module, $c$ may also be considered as an element of $P\subseteq kG$ which is regular modulo $IkG$. Hence
\begin{align*}
&\Kdim\big( kG/(Q^G + ckG)\big)_{kG}\\
&\qquad\qquad \leq \Kdim\big( kG/(IkG + ckG)\big)_{kG}&\text{as } IkG + ckG\subseteq Q^G + ckG\\
&\qquad\qquad < \Kdim(kG/IkG)_{kG}&\text{by Lemma \ref{lem: Kdim results}(i)}\\
&\qquad\qquad = \Kdim(kG/Q^G)_{kG}&\text{by Corollary \ref{cor: related rings are all alpha-homogeneous},}
\end{align*}
which, again by Lemma \ref{lem: Kdim results}(i), shows that $c\in P$ is regular modulo $Q^G$.

However, we may now deduce from a reduced rank argument that $P$ cannot be minimal over $Q^G$, as follows. Write $\rho$ for the reduced rank \cite[\S 11, Definition]{GW} of a right module over the semiprime noetherian (hence Goldie) ring $R = kG/Q^G$, and write $\overline{(\cdot)}$ for images under the map $kG\to R$. Now, $c\in P$ implies $\overline{c} R \subseteq \overline{P}$, and so by \cite[Lemma 11.3]{GW} we have $\rho(R/\overline{c} R)\geq \rho(R/\overline{P}) \geq 0$. Further, if $\overline{c}$ is a regular element of $R$, then $\overline{c}R \cong R$ as right $R$-modules, so $\rho(R/\overline{c}R) = 0$, again by \cite[Lemma 11.3]{GW}. But now \cite[Exercise 11C]{GW} implies that $\overline{P}$ cannot be a minimal prime of $R$.

This contradicts the assumption we made at the start of the proof, and so we have shown that $P\cap kN = Q^G\cap kN$.

We observed during the proof of Corollary \ref{cor: related rings are all alpha-homogeneous} that $$Q^G\cap kN = \bigcap_{g\in G} (Q\cap kN)^g.$$ But $Q$ is a prime ideal of $kH$, so $Q\cap kN$ is an $H$-prime ideal of $kN$, so may be written as $$Q\cap kN = \bigcap_{h\in H} Q_0^h$$ for some prime ideal $Q_0$ of $kN$. Combining these two shows that $$P\cap kN = Q^G\cap kN = \bigcap_{g\in G} Q_0^g.$$
Now, by applying \cite[corollary 16.8]{passmanICP} to both $P\cap kN$ and $Q\cap kN$, we have that $$h(P) = h(Q_0) = h(Q)$$ as required.
\end{proof}

\iftrue

\section{Control theorem}\label{section: control}

\subsection{The abelian case}

We will require some facts about prime ideals in power series rings.

\begin{lem}\label{lem: passing between abelian groups and isolated subgroups}
Let $A$ be a free abelian pro-$p$ group of finite rank and $B$ a closed isolated (normal) subgroup. Take $k$ to be a field of characteristic $p$. Write $\mathrm{Spec}^B (kA)$ for the set of primes of $kA$ that are controlled by $B$. Then the maps
\begin{align*}
\mathrm{Spec}^B(kA) &\leftrightarrow \mathrm{Spec}(kB)\\
P&\mapsto P\cap kB\\
QkA\,&\text{\reflectbox{$\mapsto$}} \;Q
\end{align*}
are well-defined and mutual inverses, and preserve faithfulness.
\end{lem}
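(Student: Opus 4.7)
The plan is to reduce the problem to manipulations in a formal power series ring. Since $B$ is a closed isolated subgroup of the free abelian pro-$p$ group $A$, the quotient $A/B$ is torsion-free and itself free abelian pro-$p$ of some rank $t$; hence the short exact sequence $1 \to B \to A \to A/B \to 1$ splits topologically. Fixing a complement $C$ of $B$ in $A$, together with a topological basis $c_1, \ldots, c_t$ of $C$, and setting $Z_j := c_j - 1$, we obtain an identification $kA \cong kB[[Z_1, \ldots, Z_t]]$ of complete local $k$-algebras, under which $kB$ sits as the obvious coefficient subring.

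With this presentation, well-definedness and the mutual-inverse property are largely formal. For any ideal $Q \lhd kB$, the extension $QkA$ coincides with the set of power series in $kB[[Z_1, \ldots, Z_t]]$ whose every coefficient lies in $Q$, so $kA/QkA \cong (kB/Q)[[Z_1, \ldots, Z_t]]$; when $Q$ is prime this is a power series ring over a domain, hence a domain, so $QkA$ is prime, and it is controlled by $B$ by construction. Conversely, since $kA$ is commutative, the contraction $P \cap kB$ of a prime $P \lhd kA$ is prime. The two maps are mutually inverse because $(P \cap kB)kA = P$ is literally the definition of control by $B$, while $QkA \cap kB = Q$ by comparison of constant-term coefficients in $Z$.

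Faithfulness preservation has an easy direction and a harder one. The inclusion $kB/(P \cap kB) \hookrightarrow kA/P$ immediately gives $(P \cap kB)^\dagger = P^\dagger \cap B$, and the containment $Q^\dagger \subseteq (QkA)^\dagger \cap B$ is trivial, so $QkA$ faithful forces $Q$ faithful. For the remaining direction, suppose $Q$ is faithful and take $a \in A$ with $a - 1 \in QkA$; write $a = bc$ with $b \in B$ and $c \in C$, and project to $kA/QkA \cong (kB/Q)[[Z_1, \ldots, Z_t]]$. The image of $a$ there equals $\bar{b} \cdot \bar{c}$, where $\bar{c}$ lies in $1 + (Z_1, \ldots, Z_t)(kB/Q)[[Z_1, \ldots, Z_t]]$. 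Setting this product equal to $1$ and comparing the constant term forces $\bar b = 1$ (hence $b \in Q^\dagger = 1$), and then $\bar c = 1$ as well; the latter says $c - 1 \in Q[[Z_1, \ldots, Z_t]] \cap k[[Z_1, \ldots, Z_t]]$, which is zero because $Q \cap k = 0$ for any proper prime $Q$ of $kB$, so $c = 1$ in $kC$, and by the injectivity of $C \hookrightarrow (kC)^\times$ we conclude $c = 1$ in $C$, i.e.\ $a = 1$.

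The main technical point throughout is the clean identification $QkA = Q[[Z_1, \ldots, Z_t]]$ and the ensuing factorisation $kA/QkA \cong (kB/Q)[[Z_1, \ldots, Z_t]]$; once these are in place, both the bijection and the faithfulness statement follow by bookkeeping inside the power series ring.
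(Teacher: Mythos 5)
Your proof is correct and follows essentially the same route as the paper: split $A = B \oplus C$ using isolatedness, identify $kA/QkA$ with a power series ring over the domain $kB/Q$ to get primality and the contraction identity, and read off faithfulness componentwise from the map $A \to (kA/P)^\times$. The only difference is that you verify $QkA \cap kB = Q$ and the faithfulness transfer by explicit coefficient comparison where the paper cites a lemma of Ardakov and a one-line observation, so your version is just slightly more self-contained.
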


\begin{proof}
If $P$ is a prime ideal of $kA$, then $P\cap kB$ is an $A$-prime ideal (and hence a prime ideal) of $kB$ by Lemma \ref{lem: G-prime stuff}(i).

Conversely, note that, as $B$ is isolated in $A$, the quotient $A/B$ is again free abelian pro-$p$; so we may write $A = B\oplus C$, where the natural quotient map $A\to A/B$ induces an isomorphism $A/B\cong C$. Now, if $Q$ is a prime ideal of $kB$, then $kA/QkA = (kB/Q)[[C]]$ is a power series ring with coefficients in the commutative domain $kB/Q$, and is hence itself a domain.

It follows from \cite[Lemma 5.1]{ardakovGMJ} that $QkA\cap kB = Q$, and by assumption, if $P$ is controlled by $B$ then we already have $(P\cap kB)kA = P$.

Now suppose the prime ideals $P\lhd kA$ and $Q\lhd kB$ correspond under these maps. Then, again viewing $A$ as $B\oplus C$, we may similarly consider $kA/P$ as the completed tensor product \cite[Definition 2.3]{woods-prime-quotients} $kB/Q \hat{\tensor{k}} kC$. Then the map $A\to (kA/P)^\times$ can be written as
\begin{align*}
B\oplus C &\to (kB/Q)^\times \oplus (kC)^\times \lesssim (kB/Q \hat{\tensor{k}} kC)^\times\\
(b, c) &\mapsto ((b+Q), c),
\end{align*}
so it is clear that $P$ is faithful if and only if $Q$ is faithful.
\end{proof}

\begin{lem}\label{lem: commutative prime facts}
Let $A$, $B$, $k$ be as in Lemma \ref{lem: passing between abelian groups and isolated subgroups}. Take two neighbouring prime ideals $P\lneq Q$ of $kA$, and suppose $B$ controls $P$. Then
\begin{itemize}
\item[(i)] $h(P) + \dim (A/P) = r(A)$,
\item[(ii)] $h(Q) = h(P) + 1$,
\item[(iii)] $h(P) = h(P\cap kB)$.
\end{itemize}
\end{lem}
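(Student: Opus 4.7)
The plan is to identify $kA$ as a familiar commutative ring. Since $A$ is free abelian pro-$p$ of rank $r(A)$, a choice of topological generators $a_1,\ldots,a_{r(A)}$ yields an isomorphism $kA \cong k[[T_1,\ldots,T_{r(A)}]]$ via $a_i \mapsto 1+T_i$. This is a commutative Noetherian regular local ring of Krull dimension $r(A)$, hence Cohen--Macaulay and (universally) catenary. All three parts then reduce to standard commutative-algebra facts about such rings.

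For (i), I would interpret $\dim(A/P)$ as $\Kdim(kA/P)$ and invoke the Cohen--Macaulay dimension formula $h(P)+\Kdim(R/P)=\Kdim(R)$ applied to $R=kA$, which immediately gives $h(P)+\Kdim(kA/P)=r(A)$. For (ii), catenarity of $kA$ together with finiteness of heights implies that every saturated chain of primes from $P$ to $Q$ has length $h(Q)-h(P)$; since $P \lneq Q$ are neighbouring, the chain $P \lneq Q$ is itself saturated and of length one, so $h(Q)=h(P)+1$.

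For (iii), since $B$ is closed and isolated in $A$, the quotient $A/B$ is again free abelian pro-$p$ (as was already used in the proof of Lemma \ref{lem: passing between abelian groups and isolated subgroups}), so $A$ splits as $B\oplus C$ with $C$ free abelian pro-$p$ of rank $r(A)-r(B)$, and $kA\cong kB[[T_1,\ldots,T_{r(C)}]]$. Because $P$ is controlled by $B$, we have $P=(P\cap kB)kA$ and hence $kA/P \cong (kB/(P\cap kB))[[T_1,\ldots,T_{r(C)}]]$, whose Krull dimension is $\Kdim(kB/(P\cap kB))+r(C)$. Applying (i) both to $kA$ and to the analogous power-series description of $kB$, and subtracting, yields $h(P)=r(A)-\Kdim(kA/P)=r(B)-\Kdim(kB/(P\cap kB))=h(P\cap kB)$.

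The main potential obstacle is notational rather than mathematical: pinning down the precise meaning of $\dim(A/P)$. If it is intended as the dimension of $A/P^\dagger$ as a $p$-adic analytic group (equivalently, its rank as a free abelian pro-$p$ group) rather than as $\Kdim(kA/P)$, then I would additionally need to verify that these two quantities agree in the present abelian setting. This is a standard comparison, but worth making explicit before chaining together the three parts.
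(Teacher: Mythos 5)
Your proposal is correct and follows essentially the same route as the paper: parts (i) and (ii) are the standard dimension and height formulas for the regular local ring $k[[T_1,\dots,T_{r(A)}]]$ (the paper cites Zariski--Samuel for these), and part (iii) is obtained, just as in the paper, by computing that passing from $kB/(P\cap kB)$ to the power series extension $kA/P$ raises the classical Krull dimension by exactly $r(A)-r(B)$ and then subtracting the two instances of (i). Your notational worry resolves in your favour: the paper's $\dim(A/P)$ does mean the classical Krull dimension of the ring $kA/P$, as its own later use of the lemma (writing $\dim(kA/U_1)$) confirms.
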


\begin{proof}
$ $

\begin{itemize}
\item[(i)] This follows from \cite[Ch. VII, \S 10, Corollary 1]{zs}.
\item[(ii)] This follows from \cite[Ch. VII, \S 10, Corollary 2]{zs}.
\item[(iii)] Under the correspondence of Lemma \ref{lem: passing between abelian groups and isolated subgroups}, any saturated chain of prime ideals $0 = Q_0 \lneq Q_1 \lneq \dots \lneq Q_n = P\cap kB$ of $kB$ extends to a chain of prime ideals $0 = P_0 \lneq P_1 \lneq \dots \lneq P_n = P$ of $kA$. As any two saturated chains of prime ideals in $kA$ have the same length \cite[Ch. VII, \S 10, Theorem 34 and Corollary 1]{zs}, we need only check that this chain is saturated.

Take two adjacent prime ideals $I_1\lneq I_2$ of $kB$, so that $h(I_2) = h(I_1) + 1$ \cite[Ch. VII, \S 10, Corollary 2]{zs} and $I_1kA \lneq I_2kA$ are prime. We will show that $I_1kA$ and $I_2kA$ are adjacent by showing that their heights also differ by 1. By performing induction on $r(A/B)$, it will suffice to prove this for the case $r(A/B) = 1$, i.e. $kA = kB[[X]]$.

It is clear that, when $R$ is a commutative ring, $\dim(R[[X]]) \geq 1 + \dim(R)$ (where $\dim$ denotes the classical Krull dimension). But, giving $R[[X]]$ the $(X)$-adic filtration, we see that $\gr(R[[X]]) \cong R[x]$. By \cite[6.5.6]{MR}, we have $\dim(R[[X]]) \leq \dim(\gr(R[[X]])) = \dim(R[x]) = 1 + \dim(R)$, where this last equality follows from \cite[6.5.4(i)]{MR}.

Hence, for any prime ideal $I$, we have $$\dim(kA/IkA) - \dim(kB/I) = \dim((kB/I)[[X]]) - \dim(kB/I) = 1.$$ But, from (i), we see that
\begin{align*}
\dim(kA/IkA) &= r(A) - h(IkA),\\
\dim(kB/I) &= r(B) - h(I),
\end{align*} and hence we conclude that $h(I) = h(IkA)$. Setting $I = I_1, I_2$ now shows that $h(I_2kA) = h(I_1kA) + 1$ as required.
\qedhere
\end{itemize}
\end{proof}

\subsection{Faithful primes are controlled by $\Delta$}

As in \cite{woods-struct-of-G}, if $P$ is a prime ideal of some completed group ring $kG$, we will write $P^\dagger := \ker(G\to (kG/P)^\times)$, and say that $P$ is \emph{faithful} if $P^\dagger = 1$ and $P$ is \emph{almost faithful} if $P^\dagger$ is finite.

Fix a prime $p$, which will be arbitrary until otherwise stated.

Recall the control theorem of Ardakov \cite[8.6]{ardakovInv}:

\begin{thm}\label{thm: control, nilpt p-valued}
Let $G$ be a nilpotent $p$-valued group of finite rank with centre $Z$.
\begin{itemize}
\item[(i)] If $\mathfrak{p}$ is a prime ideal of $kZ$, then $\mathfrak{p}kG$ is a prime ideal of $kG$.
\item[(ii)] If $P$ is a faithful prime ideal of $kG$, then $P$ is controlled by $Z$.
\end{itemize}
\end{thm}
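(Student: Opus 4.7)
The plan for (i) is induction on the rank of $G/Z$; the base case $G=Z$ is trivial. Since $Z$ is central and $G/Z$ is nilpotent $p$-valued, a Lazard-type ordered basis of $G$ extending a basis of $Z$ lets me write $kG$ as an iterated complete skew power series extension $kZ[[y_1,\dots,y_s;\sigma_i,\delta_i]]$, where each $y_i=g_i-1$ for $g_i\notin Z$ and each $y_i$ commutes with $kZ$ (because $Z$ is central). Factoring by $\mathfrak{p}kG$ produces the analogous skew power series ring over the commutative domain $kZ/\mathfrak{p}$, whose skew data is inherited from commutators in $G/Z$. A standard primality result for skew power series rings over a domain---applied at each step, using the nilpotency of $G/Z$ to control the derivations---then yields that $\mathfrak{p}kG$ is prime.

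For (ii), the plan is to reduce to the commutative case via the associated graded ring. Set $\mathfrak{p}=P\cap kZ$ and $\overline{R}=kG/\mathfrak{p}kG$, which is prime by (i); it suffices to show that the image $\overline{P}$ of $P$ in $\overline{R}$ vanishes. The $p$-valuation on $G$ equips $\overline{R}$ with a separated filtration whose associated graded ring is the \emph{commutative} polynomial ring $\gr(\overline{R})\cong(kZ/\mathfrak{p})[\xi_1,\dots,\xi_s]$: commutativity holds precisely because $G$ is nilpotent, so that commutators $[g,h]-1$ land in strictly higher filtration degree than the factors and $\gr$ kills them. The graded ideal $\gr(\overline{P})$ ought then to be controlled by the coefficient subring $\gr(kZ/\mathfrak{p})$ via a direct commutative argument of the sort used in Lemmas~\ref{lem: passing between abelian groups and isolated subgroups} and~\ref{lem: commutative prime facts}. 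Since $\gr(\overline{P})$ has zero intersection with that subring by construction, it must vanish, and a standard separated-filtration argument lifts this to $\overline{P}=0$, as required.

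The hardest step will be converting the group-theoretic faithfulness hypothesis $P^\dagger=1$ into an algebraic condition strong enough to force $\gr(\overline{P})$ to be controlled by $\gr(kZ/\mathfrak{p})$. Morally, faithfulness says that the principal symbols of the $g-1$ for $g\in G\setminus Z$ must remain algebraically independent in the graded quotient $\gr(\overline{R})/\gr(\overline{P})$. Making this precise requires a careful choice of ordered basis of $G$ compatible with $Z$ and a delicate tracking of where generators of $P$ sit in the $p$-valuation filtration; this is the step where one genuinely exploits nilpotency of $G$ itself (and not just of $G/Z$), since without full nilpotency the graded ring would fail to be commutative and the whole reduction would break down.
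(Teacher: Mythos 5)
This theorem is not proved in the paper at all: it is quoted verbatim from Ardakov's work (the citations \cite[8.4, 8.6]{ardakovInv}), and it is essentially the main theorem of that (long) paper. So any ``blind proof'' here is really an attempt to reprove Ardakov's extension and control theorems from scratch, and your sketch has genuine gaps in both parts. For (i), the setup (writing $kG/\mathfrak{p}kG$ as an iterated skew power series ring over $kZ/\mathfrak{p}$, with the skew data acting trivially on the central coefficient ring) is reasonable, but the ``standard primality result for skew power series rings over a domain'' that you invoke to finish does not exist off the shelf in the generality you need. The natural route --- pass to the associated graded ring --- fails because $\gr(kZ/\mathfrak{p})$ for the induced filtration need not be a domain when $\mathfrak{p}$ is a nonzero, nonmaximal prime; and the known primality criteria for skew power series extensions (Venjakob, Letzter--Wang) carry hypotheses on $\sigma$ and $\delta$ that must actually be verified, and an induction up the iterated extension requires knowing the intermediate rings are prime, which is circular with what you are proving. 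This is precisely why Ardakov's proof of his Theorem 8.4 is a nontrivial induction using the machinery of the whole paper rather than a one-line appeal to skew power series folklore.

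For (ii) the gap is more serious. First, the identification $\gr(\overline{R})\cong(kZ/\mathfrak{p})[\xi_1,\dots,\xi_s]$ conflates the coefficient ring with its associated graded: for any filtration coming from the $p$-valuation, $\gr(kG/\mathfrak{p}kG)$ is a quotient of $\gr(kG)$ by the symbol ideal of $\mathfrak{p}kG$, not a polynomial ring over $kZ/\mathfrak{p}$, and it need not be a domain or behave like one. Second, and decisively, the step you defer as ``the hardest'' --- converting the hypothesis $P^\dagger=1$ into the statement that $\gr(\overline{P})$ is controlled by the graded image of $kZ$ --- is not a technical detail to be filled in: it \emph{is} the theorem. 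Faithfulness is a statement about group elements, and it does not pass to the graded level in any usable way (elements of a faithful $P$ can have symbols concentrated in degree zero, and control at the graded level does not lift to control at the filtered level without a separate argument). The actual proof of \cite[Theorem 8.6]{ardakovInv} requires the theory of Mahler expansions of automorphisms and a delicate analysis of how the conjugation action of $G$ interacts with a putative prime not controlled by $Z$; none of that is visible in your outline. As it stands the proposal is a plausible-sounding reduction that stops exactly where the mathematical content begins.
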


\begin{proof}
This is \cite[8.4, 8.6]{ardakovInv}.
\end{proof}

\begin{lem}\label{lem: centre = delta}
Let $G$ be finite-by-(nilpotent $p$-valuable), i.e. $G = \fn(G)$. Then $Z(G/\Delta^+) = \Delta/\Delta^+$.
\end{lem}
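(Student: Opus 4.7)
The plan is to reduce everything to the well-known fact that in a torsion-free nilpotent group the FC-centre coincides with the centre, then transfer across the quotient by $\Delta^+$.

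First I would observe that $G/\Delta^+$ is nilpotent $p$-valuable. Indeed, since $G=\fn(G)$, by definition there exists a finite normal subgroup $F$ of $G$ with $G/F$ nilpotent $p$-valuable. Because $F$ is finite and normal, every element of $F$ has finite conjugacy class and finite order, so $F\subseteq\Delta^+$. Conversely, $\Delta^+/F$ is a finite subgroup of the torsion-free group $G/F$, so $\Delta^+=F$. Hence $G/\Delta^+ = G/F$ is nilpotent $p$-valuable, and in particular a torsion-free nilpotent (topological) group.

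Next I would establish the auxiliary claim that for any torsion-free nilpotent group $N$ we have $\Delta(N)=Z(N)$. The inclusion $Z(N)\subseteq\Delta(N)$ is immediate. For the reverse, proceed by induction on the nilpotency class of $N$. The abelian case is trivial. For the inductive step, use that $Z(N)$ is isolated in $N$ (a standard Mal'cev-type fact for torsion-free nilpotent groups), so $N/Z(N)$ is again torsion-free nilpotent, of smaller class. Given $x\in\Delta(N)$, the image $\bar x\in N/Z(N)$ lies in $\Delta(N/Z(N))$ and is therefore central by induction, giving $[x,N]\subseteq Z(N)$. Since $[x,N]\subseteq Z(N)$, the map $y\mapsto[x,y]$ is a group homomorphism $N\to Z(N)$ with kernel $C_N(x)$ of finite index, so its image $[x,N]$ is a finite subgroup of the torsion-free group $Z(N)$, hence trivial. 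Thus $x\in Z(N)$.

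Finally, I would verify $\Delta(G)/\Delta^+ = \Delta(G/\Delta^+)$, which combined with the previous two steps yields the lemma. The forward inclusion is clear: if $x\in\Delta(G)$ then $C_G(x)$ has finite index, and its image in $G/\Delta^+$ lies in $C_{G/\Delta^+}(\bar x)$, so $\bar x\in\Delta(G/\Delta^+)$. For the reverse, lift $\bar x\in\Delta(G/\Delta^+)$ to $x\in G$; every $G$-conjugate of $x$ differs from one of finitely many fixed lifts by an element of the finite set $\Delta^+$, giving at most $|\Delta^+|\cdot[G/\Delta^+:C_{G/\Delta^+}(\bar x)]<\infty$ conjugates of $x$, so $x\in\Delta(G)$.

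I do not expect any genuine obstacle; the only mildly subtle point is the isolatedness of $Z(N)$ in a torsion-free nilpotent group, used to make the induction go through, and this is classical. Everything else is bookkeeping about FC-conjugacy classes.
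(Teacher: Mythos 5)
Your proof is correct, but it takes a genuinely different route from the paper's for the key inclusion $\Delta/\Delta^+\leq Z(G/\Delta^+)$. The paper establishes $\Delta(G/\Delta^+)=\Delta/\Delta^+$ exactly as you do in your final step, but then argues $p$-adically: for $x\in\Delta$ and arbitrary $g\in G$, some power $g^{p^k}$ lies in the open subgroup $\mathbf{C}_{G/\Delta^+}(x\Delta^+)$, so $(x^{-1}gx)^{p^k}\equiv g^{p^k}$ modulo $\Delta^+$, and uniqueness of $p^k$-th roots in $p$-valued groups (Lazard III.2.1.4) forces $x^{-1}gx\equiv g$. You instead first identify $\Delta^+$ with the finite radical $F$ (so that $G/\Delta^+$ is nilpotent $p$-valuable, hence torsion-free nilpotent) and then prove the classical Mal'cev-style fact that $\Delta(N)=Z(N)$ for any torsion-free nilpotent $N$, via isolatedness of the centre and the finite-image commutator homomorphism $y\mapsto[x,y]$. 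The paper's argument is shorter given Lazard's theorem and in fact uses only $p$-valuability of the quotient, not its nilpotence; yours is purely group-theoretic, requiring no $p$-adic input beyond torsion-freeness, at the cost of the extra bookkeeping identifying $\Delta^+=F$ and an induction on nilpotency class. The only point you leave to the reader --- that the centre of a torsion-free nilpotent group is isolated --- is indeed classical, and the finiteness of $\Delta^+$ that you use is part of the paper's standing hypotheses, so there is no gap.
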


\begin{proof}
Given $x\in G$, the two conditions $[G/\Delta^+: \mathbf{C}_{G/\Delta^+}(x\Delta^+)] < \infty$ and $[G: \mathbf{C}_G(x)] < \infty$ are equivalent, as $\Delta^+$ is finite; this shows that we have $\Delta(G/\Delta^+) = \Delta/\Delta^+$. Take some $x\in \Delta$, so that $x$ satisfies this condition: then, given arbitrary $g\in G$, there exists some $k$ such that $g^{p^k}\Delta^+ \in \mathbf{C}_{G/\Delta^+}(x\Delta^+)$, so that $(x^{-1}gx)^{p^k}\Delta^+ = g^{p^k}\Delta^+$, and it now follows from \cite[III, 2.1.4]{lazard} that $x^{-1}gx\Delta^+ = g\Delta^+$. This shows that $\Delta/\Delta^+ \leq Z(G/\Delta^+)$. Conversely, we must have $Z(G/\Delta^+) \leq \Delta(G/\Delta^+)$ by definition.
\end{proof}

We extend Theorem \ref{thm: control, nilpt p-valued} to:

\begin{propn}\label{propn: control, FNp}
Let $G$ be a finite-by-(nilpotent $p$-valuable) group and $k$ a finite field of characteristic $p$.

\begin{itemize}
\item[(i)] If $\mathfrak{p}$ is a $G$-prime ideal of $k\Delta$, then $\mathfrak{p}kG$ is a prime ideal of $kG$.
\item[(ii)] If $P$ is an almost faithful prime ideal of $kG$, then $P$ is controlled by $\Delta$.
\end{itemize}
\end{propn}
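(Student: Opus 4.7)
The natural strategy is to bootstrap from Ardakov's theorem (\autoref{thm: control, nilpt p-valued}) applied to the nilpotent $p$-valued quotient $G/\Delta^+$, whose centre is $\Delta/\Delta^+$ by \autoref{lem: centre = delta}. The main technical difficulty is that a $G$-prime $\mathfrak{p}$ of $k\Delta$ need not contain the augmentation ideal $\omega(\Delta^+) k\Delta$, so we cannot simply pass to the quotient group $G/\Delta^+$; we must instead work block-by-block using the central-idempotent machinery of \cite{woods-prime-quotients}, which is designed precisely for finite-by-(abelian $p$-valuable) groups such as $\Delta$.

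For (i), I would first use \autoref{lem: G-prime stuff}(ii) to choose a minimal prime $\mathfrak{p}_0$ of $k\Delta$ above $\mathfrak{p}$, so that $\mathfrak{p} = \bigcap_{g \in G} \mathfrak{p}_0^g$ is a finite intersection of $G$-conjugates. Pick a control-point idempotent $e \in \cpi(\mathfrak{p}_0)$ and let $\Delta_1$ (resp.\ $G_1$) be the stabiliser of $e$ in $\Delta$ (resp.\ in $G$); both are open. By \cite[Theorem A]{woods-prime-quotients} combined with the Matrix Units Lemma \cite[Lemma 6.1]{woods-prime-quotients}, the block $e\cdot \overline{k\Delta_1}$ is isomorphic to a matrix ring $M_m(k'[[\Delta_1/\Delta^+]])$ under which $\mathfrak{p}_0$ corresponds to $M_m(\mathfrak{q})$ for a prime $\mathfrak{q}$ of $k'[[\Delta_1/\Delta^+]]$; the analogous block of $\overline{kG_1}$ is $M_m(k'[[G_1/\Delta^+]])$. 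Crucially, $\Delta_1/\Delta^+ \leq \Delta/\Delta^+ = Z(G/\Delta^+) \leq Z(G_1/\Delta^+)$ by \autoref{lem: centre = delta}. Hence \autoref{thm: control, nilpt p-valued}(i) applied in the nilpotent $p$-valued group $G_1/\Delta^+$ shows that $\mathfrak{q}\cdot k'[[G_1/\Delta^+]]$ is prime; pulling back through the matrix unit identification and the block decomposition gives $\mathfrak{p}_0 k G_1$ prime, and a final Clifford-type step (using that $G_1$ stabilises $\mathfrak{p}_0$ and has finite index in $G$) deduces that $\mathfrak{p} kG = \bigcap_{g \in G/G_1} \mathfrak{p}_0^g kG$ is prime in $kG$.

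For (ii), the reduction to the faithful case is clean: $P^\dagger$ is a finite normal subgroup of $G$, so $P^\dagger \leq \Delta^+$, and modding out by $P^\dagger$ preserves both the finite-by-(nilpotent $p$-valuable) structure and the equality $\Delta(G/P^\dagger) = \Delta/P^\dagger$. Assume therefore that $P$ is faithful. Set $\mathfrak{p} = P \cap k\Delta$, which is $G$-prime by \autoref{lem: G-prime stuff}(i). By (i), $\mathfrak{p} kG$ is prime and $\mathfrak{p} kG \subseteq P$; the task is to upgrade this to equality. Running the same idempotent/matrix-unit identification of (i) inside $kG_1$, the prime $P \cap kG_1$ (or rather a minimal prime above it) transports to a faithful prime of $k'[[G_1/\Delta^+]]$ whose contraction to $k'[[\Delta_1/\Delta^+]]$ is $\mathfrak{q}$; \autoref{thm: control, nilpt p-valued}(ii) for the nilpotent $p$-valued group $G_1/\Delta^+$ then forces this prime to be controlled by its centre, hence equal to $\mathfrak{q} \cdot k'[[G_1/\Delta^+]]$. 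Tracing back through the isomorphisms yields $P = \mathfrak{p}kG$.

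\textbf{Main obstacle.} The genuine technical heart is the centralised block decomposition of $\overline{k\Delta}$ and its extension to $\overline{kG}$: one needs the control-point idempotents of \cite{woods-prime-quotients} to be $G$-compatible (not merely $\Delta$-compatible) in a precise enough way that the matrix-unit isomorphism intertwines $\mathfrak{p}_0$ with a prime of a power series ring over an \emph{open} subgroup whose reduction modulo $\Delta^+$ is still nilpotent $p$-valued and still contains $\Delta_1/\Delta^+$ in its centre. Once this bookkeeping is in place, both (i) and (ii) drop out of Ardakov's theorem; without it, the finite radical $\Delta^+$ obstructs any direct reduction.
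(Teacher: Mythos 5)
Your proposal follows essentially the same route as the paper: both parts are reduced, via the central-idempotent and Matrix Units machinery of \cite{woods-prime-quotients} together with Lemma \ref{lem: centre = delta}, to Ardakov's control theorem (Theorem \ref{thm: control, nilpt p-valued}) applied to the nilpotent $p$-valued group $G_1/\Delta^+$. The only cosmetic difference is that you pass to a minimal prime $\mathfrak{p}_0$ above $\mathfrak{p}$ and carry out the Clifford-type descent by hand, whereas the paper works directly with the $G$-prime via the idempotent sum $f = \cpisum{e}{G}$, which packages that same step.
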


\begin{proof}
Adopt the notation of \cite[Lemma 1.1 and Notation 1.2]{woods-prime-quotients}. Let $e\in \cpi(\mathfrak{p})$, and write $f = \cpisum{e}{G}$. To prove (i), it suffices to prove that the ideal $f\cdot\overline{\mathfrak{p}kG}\lhd f\cdot\overline{kG}$ is prime. But, by the Matrix Units Lemma \cite[Lemma 6.1]{woods-prime-quotients}, we have an isomorphism
$$f\cdot\overline{kG} \cong M_s(e\cdot\overline{kG_1}),$$
where $G_1$ is the stabiliser in $G$ of $e$, and under which $f\cdot\overline{\mathfrak{p}kG} \mapsto M_s(e\cdot\overline{\mathfrak{p}_1 kG_1})$ for some $G_1$-prime ideal $\mathfrak{p}_1$ of $k[[\Delta\cap G_1]]$. So, by Morita equivalence, it will suffice to show that the ideal $e\cdot\overline{\mathfrak{p}_1 kG_1}\lhd e\cdot\overline{kG_1}$ is prime.

Now recall from \cite[Theorems A and C]{woods-prime-quotients} that we have an isomorphism
$$\psi: e\cdot\overline{kG_1} \cong M_t(k'[[G_1/\Delta^+]])$$
under which $e\cdot\overline{\mathfrak{p}_1 kG_1} \mapsto \mathfrak{q} k'[[G_1/\Delta^+]]$ for a $(G_1/\Delta^+)$-prime ideal $\mathfrak{q}$ of $k'[[\Delta\cap G_1/\Delta^+]]$. Hence we need now only show that $\mathfrak{q}k'N \lhd k'N$ is prime, where $N = G_1/\Delta^+$.

Note that, as $G_1$ is open in $G$, we have $\Delta(G_1) = \Delta\cap G_1$ \cite[Lemma 1.3(ii)]{woods-struct-of-G}; and from Lemma \ref{lem: centre = delta}, $\Delta(G_1)/\Delta^+ = Z(G_1/\Delta^+)$. Hence, still writing $N = G_1/\Delta^+$, we see that $\mathfrak{q}$ is an $N$-prime ideal of $k'[[Z(N)]]$, and hence a prime ideal. But now $\mathfrak{q}k'N$ is prime by Theorem \ref{thm: control, nilpt p-valued}(i). This establishes part (i) of the proposition.

To show part (ii), take an almost faithful prime ideal $P$ of $kG$. We would like to show that $P$ is a minimal prime ideal above $(P\cap k\Delta)kG$. But this is clearly true when $\Delta^+ = 1$ by Theorem \ref{thm: control, nilpt p-valued}; and in the general case, another application of the Matrix Units Lemma \cite[Lemma 6.1]{woods-prime-quotients} and \cite[Theorems A and C]{woods-prime-quotients}, as above, reduces to the case $\Delta^+ = 1$.

Hence, finally, we need only show that $(P\cap k\Delta)kG$ is prime; but $P\cap k\Delta$ is a $G$-prime ideal of $k\Delta$ (again by Lemma \ref{lem: G-prime stuff}(i)), so we are done by part (i) of the proposition.
\end{proof}

Until the end of this section, we will write $(-)^\circ$ to mean $\bigcap_{g\in G} (-)^g$.

\begin{cor}\label{cor: almost-FNp extends to FNp}
Let $G$ be a finite-by-(nilpotent $p$-valuable) group, and $H$ an open normal subgroup of $G$ containing $\Delta$. Let $k$ be a finite field of characteristic $p$. If $P$ is an almost faithful $G$-prime ideal of $kH$, then $PkG$ is a prime ideal of $kG$.
\end{cor}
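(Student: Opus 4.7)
The plan is to reduce the statement to Proposition~\ref{propn: control, FNp}(i) applied to the ambient group $G$, by contracting $P$ to $k\Delta$ and identifying the extension $PkG$ with the induced prime $(P\cap k\Delta)kG$.

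First I would verify that $P\cap k\Delta$ is a $G$-prime ideal of $k\Delta$. Given $G$-stable ideals $A,B\lhd k\Delta$ with $AB\subseteq P\cap k\Delta$, the sets $AkH$ and $BkH$ are two-sided $G$-stable ideals of $kH$ (two-sidedness uses $\Delta\lhd H$ together with $H$-stability of $A, B$, giving $kHA = AkH$ and similarly for $B$), and $(AkH)(BkH) = AB\cdot kH\subseteq P$. $G$-primeness of $P$ then places one of $AkH, BkH$ inside $P$, whence $A$ or $B$ lies in $P\cap k\Delta$. Since $G = \fn(G)$ and $\Delta(G) = \Delta$, Proposition~\ref{propn: control, FNp}(i) applied to $G$ yields that $(P\cap k\Delta)kG$ is a prime ideal of $kG$.

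It remains to show $PkG = (P\cap k\Delta)kG$. The inclusion $\supseteq$ is immediate, and since $kG$ is free as a right $kH$-module, the reverse reduces to showing $P = (P\cap k\Delta)kH$, i.e., that $P$ is controlled by $\Delta$ inside $kH$. Letting $Q$ be a minimal prime of $kH$ above $P$, Lemma~\ref{lem: G-prime stuff}(ii) gives $P = \bigcap_{g\in G} Q^g$, a finite intersection over the $G$-orbit of $Q$. Provided each $Q^g$ is almost faithful, Proposition~\ref{propn: control, FNp}(ii) applied to $H$ (which is itself finite-by-(nilpotent $p$-valuable) with $\Delta(H) = \Delta$, since $\Delta\subseteq H$) gives $Q^g = (Q^g\cap k\Delta)kH$; the control of $P$ then follows provided $\bigcap_g (Q^g\cap k\Delta)kH = \bigl(\bigcap_g(Q^g\cap k\Delta)\bigr)kH = (P\cap k\Delta)kH$.

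The main obstacle is the bracketed proviso in Step~3: we are only given that $P$ is almost faithful, and a priori $Q^\dagger\supseteq P^\dagger$ could be infinite, so Proposition~\ref{propn: control, FNp}(ii) need not apply directly to $Q$; and separately, intersection need not commute with the extension $k\Delta\hookrightarrow kH$. I expect both points are handled by reducing to the case $\Delta^+ = 1$, via the primitive central idempotent and Matrix Units Lemma machinery of~\cite{woods-prime-quotients} used in the proof of Proposition~\ref{propn: control, FNp}. In that reduction $\Delta$ becomes central in $H$ by Lemma~\ref{lem: centre = delta}, torsion-freeness of $H/\Delta^+$ forces $P^\dagger = 1$ to propagate to tighter control on the $Q^\dagger$, and centrality of $k\Delta$ in $kH$ makes the intersection-extension commutation transparent.
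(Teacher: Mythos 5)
Your architecture is the same as the paper's: contract to $k\Delta$, get primality of $(P\cap k\Delta)kG$ from Proposition \ref{propn: control, FNp}(i), and reduce $PkG=(P\cap k\Delta)kG$ to control of $P$ by $\Delta$ inside $kH$, via a minimal prime $Q$ of $kH$ above $P$ and Proposition \ref{propn: control, FNp}(ii). However, the step you isolate as ``the main obstacle'' --- that $Q$ is almost faithful --- is the one substantive point of the whole proof, and your proposed resolution does not close it. The missing idea is that $G$, being finite-by-(nilpotent $p$-valuable), is \emph{orbitally sound} (\cite[Definition 1.4, Corollary 2.4]{woods-struct-of-G}). Indeed, $Q^\dagger$ is a $G$-orbital closed subgroup whose normal core is $\bigcap_{g\in G}(Q^\dagger)^g=\bigl(\bigcap_{g\in G}Q^g\bigr)^\dagger=P^\dagger$, which is finite by hypothesis; orbital soundness gives $[Q^\dagger:P^\dagger]<\infty$, so $Q^\dagger$ is finite and Proposition \ref{propn: control, FNp}(ii) applies to $Q$. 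Your suggestion of handling this via the primitive-idempotent/Matrix Units Lemma reduction to $\Delta^+=1$ is misdirected: that machinery performs a ring-theoretic reduction and does not by itself produce the group-theoretic finiteness of $Q^\dagger$. (An argument in the spirit of your sketch can be completed --- in $G/\Delta^+$, if $Q^\dagger\Delta^+/\Delta^+$ were infinite it would be a nontrivial normal subgroup of the nilpotent group $H/\Delta^+$, hence would meet $Z(G/\Delta^+)=\Delta/\Delta^+$ in a nontrivial subgroup normal in $G/\Delta^+$ and contained in the core --- but that is essentially a proof of orbital soundness, which the paper simply cites.)

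Your other two concerns are unproblematic and need no special machinery: $G$-primeness of $P\cap k\Delta$ follows either by your direct argument or from Lemma \ref{lem: G-prime stuff}, and extension of $H$-stable ideals from $k\Delta$ to $kH$ commutes with the finite intersection over the $G$-orbit of $Q$ because $kH$ is topologically free as a $k\Delta$-module; the paper uses exactly this when it applies $\bigcap_{g\in G}(-)^g$ to both sides of $Q=(Q\cap k\Delta)kH$.
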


\begin{proof}
Take a minimal prime $Q$ of $kH$ above $P$. Then we have $Q^\circ = P$, so $Q^\dagger$ is finite (as $G$ is orbitally sound \cite[Definition 1.4, Corollary 2.4]{woods-struct-of-G}). Hence $Q$ is controlled by $\Delta$, by Proposition \ref{propn: control, FNp}(ii), and by applying $(-)^\circ$ to both sides of the equality $Q = (Q\cap k\Delta)kH$, we see that $P$ is also: $P = (P\cap k\Delta)kH$. In particular $PkG = (P\cap k\Delta)kG$. But now Proposition \ref{propn: control, FNp}(i) shows that $(P\cap k\Delta)kG$ is prime.
\end{proof}

For the following results, we need to assume that $p > 2$ in order to be able to invoke \cite[Theorem A]{woods-extensions-of-primes}.

\begin{propn}\label{propn: af primes controlled by FNp}
Let $G$ be a nilpotent-by-finite, orbitally sound compact $p$-adic analytic group, and $k$ a finite field of characteristic $p > 2$. Let $H = \fn(G)$. If $P$ is an almost faithful prime ideal of $kG$, then $P$ is controlled by $H$.
\end{propn}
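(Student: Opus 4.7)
The plan is to use the prime extension theorem to realise $P$ as the extension from $kH$ of a prime ideal. Let $Q$ be a minimal prime of $kH$ above the $G$-prime ideal $P\cap kH$; by Lemma~\ref{lem: G-prime stuff}(ii), $Q$ has only finitely many $G$-conjugates, and $P\cap kH = \bigcap_{g\in G} Q^g$.

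The crucial step is to show that $Q$ is almost faithful. Writing $(-)^\circ = \bigcap_{g\in G}(-)^g$, we have
$$(Q^\dagger)^\circ \;=\; \bigcap_{g\in G}(Q^g)^\dagger \;=\; \Big(\bigcap_{g\in G}Q^g\Big)^\dagger \;=\; (P\cap kH)^\dagger \;=\; P^\dagger \cap H,$$
which is finite because $P^\dagger$ is. The subgroup $Q^\dagger \leq H$ has finite $G$-orbit (bounded by that of $Q$), so the orbital soundness hypothesis on $G$ gives $[Q^\dagger : (Q^\dagger)^\circ]<\infty$, and hence $Q^\dagger$ is finite.

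Since $p>2$, the prime extension theorem applied to each $Q^g$ yields that $Q^g kG$ is a two-sided prime ideal of $kG$. For any $g\in G$, conjugating the inclusion $Q\subseteq QkG$ by $g$ gives $Q^g\subseteq QkG$, whence $Q^g kG\subseteq QkG$; by symmetry, $Q^g kG = QkG$ for every $g$. Using that $kG$ is free (hence flat) as a right $kH$-module, finite intersections of ideals of $kH$ commute with extension to $kG$, so
$$(P\cap kH)kG \;=\; \Big(\bigcap_{g\in G}Q^g\Big)kG \;=\; \bigcap_{g\in G} Q^g kG \;=\; QkG,$$
which is prime. Since $P$ is a minimal prime of $kG$ above $(P\cap kH)kG$ by Lemma~\ref{lem: G-prime stuff}(i), we conclude $P = (P\cap kH)kG$, i.e.\ $P$ is controlled by $H$. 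The main obstacle is establishing the almost-faithfulness of $Q$: this is precisely the step where the orbital soundness hypothesis is used, via its consequence that a subgroup of $G$ with finite $G$-orbit and finite core is itself finite.
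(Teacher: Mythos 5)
Your first step --- deducing that $Q^\dagger$ is finite from $(Q^\dagger)^\circ = P^\dagger\cap H$ and orbital soundness --- is exactly right and agrees with the paper. The gap is in the next step, where you apply the prime extension theorem directly to $Q$ (and to each $Q^g$) as a prime of $kH$. That theorem, as it is actually used throughout this paper, requires the almost faithful prime of $k[[\fn(G)]]$ to be stable under the ambient group: if $Q$ is not $G$-stable, then $QkG$ is not even a two-sided ideal of $kG$ (since $kG$ is free over $kH$ on coset representatives, $QkG\cap kH = Q$, so two-sidedness of $QkG$ would force $Q^g\subseteq Q$ for all $g$). Your deduction that $Q^gkG = QkG$ for all $g$ would therefore prove that \emph{every} almost faithful prime of $kH$ is $G$-stable and that $P\cap kH = Q$ is itself prime --- a much stronger statement than the proposition, and not one the paper establishes (its proof explicitly passes to the stabiliser $S=\mathrm{Stab}_G(Q)$, which would be pointless if $S$ were always all of $G$). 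Note also that conjugating the inclusion $Q\subseteq QkG$ by $g$ gives $Q^g\subseteq (QkG)^g = Q^gkG$, not $Q^g\subseteq QkG$; the latter already presupposes the two-sidedness you are trying to establish.

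The paper repairs this in two stages. First, by \cite[Corollary 14.8]{passmanICP}, to prove that $(P\cap kH)kG=\bigl(\bigcap_{g} Q^g\bigr)kG$ is prime it suffices to prove that $QkS$ is prime, where $S=\mathrm{Stab}_G(Q)$ --- and $Q$ \emph{is} $S$-stable. Second, one cannot yet invoke the prime extension theorem for the pair $(S,H)$, because $\fn(S)$ may strictly contain $H$; so one first extends $Q$ to the prime $QkT$ of $kT$, where $T=\fn(S)$, using Corollary \ref{cor: almost-FNp extends to FNp} (which rests on the control-by-$\Delta$ machinery of Proposition \ref{propn: control, FNp}), and only then applies the prime extension theorem to $(S,T)$ to conclude that $QkS=(QkT)kS$ is prime. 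These two reductions --- to the stabiliser, and through $\fn(S)$ --- are the substance of the proof that your argument skips.
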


\begin{proof}
Let $Q$ be a minimal prime ideal of $kH$ above $P\cap kH$. Then $(Q^\dagger)^\circ = P^\dagger\cap H$ is finite, so, as $G$ is orbitally sound, $Q^\dagger$ is also finite. By \cite[Corollary 14.8]{passmanICP}, in order to prove that $(P\cap kH)kG$ is prime, it suffices to show that $QkS$ is prime, where $S$ is the stabiliser in $G$ of $Q$.

Let $T = \fn(S)$. As $H$ is a finite-by-(nilpotent $p$-valuable) open normal subgroup of $S$, we see that $H$ must be an open normal subgroup of $T$. It is also clear that $\Delta(H) = \Delta(T) = \Delta(S) = \Delta(G)$ \cite[Lemma 1.3(ii) and Theorem C]{woods-struct-of-G}. Now, by Corollary \ref{cor: almost-FNp extends to FNp}, $QkT$ must be prime; and we have that $(QkT)^\dagger$ is finite. Now, by the prime extension theorem \cite[Theorem A]{woods-extensions-of-primes}, $(QkT)kS = QkS$ is prime.
\end{proof}

\begin{lem}\label{lem: control, n-by-f, patching G - H - Delta}
Let $G$ be a nilpotent-by-finite compact $p$-adic analytic group, and let $H \geq K$ be any two closed normal subgroups of $G$. Take $P$ to be a prime ideal of $kG$. Let $Q$ be a minimal prime ideal of $kH$ above $P\cap kH$. If $P$ is controlled by $H$ and $Q$ is controlled by $K$, then $P$ is controlled by $K$.
\end{lem}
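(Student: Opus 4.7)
The plan is to reduce the desired control statement $P = (P\cap kK)kG$ to an identity of ideals inside $kH$. Since we are given $P = (P\cap kH)kG$, it will suffice to prove
\[ P\cap kH = (P\cap kK)\,kH, \]
because right-multiplying by $kG$ then yields $P = (P\cap kK)\,kH\cdot kG = (P\cap kK)\,kG$, which is exactly the assertion that $P$ is controlled by $K$.

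To prove this intermediate identity, I would first apply Lemma \ref{lem: G-prime stuff}(ii) to the $G$-prime ideal $P\cap kH$ of the noetherian ring $kH$: this expresses $P\cap kH = Q^\circ = \bigcap_{g\in G} Q^g$ as a (finite) intersection of the $G$-conjugates of $Q$. Using the hypothesis $Q = (Q\cap kK)\,kH$ together with the normality of both $H$ and $K$ in $G$ (so that $kH^g = kH$ and $kK^g = kK$), each conjugate rewrites as
\[ Q^g = \bigl((Q\cap kK)\,kH\bigr)^g = (Q^g\cap kK)\,kH, \]
and hence $P\cap kH = \bigcap_{g\in G}(Q^g\cap kK)\,kH$.

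The last step is to exchange the intersection with extension of scalars from $kK$ to $kH$. For this I would invoke flatness (in fact freeness) of $kH$ as a left $kK$-module, a standard property of completed group rings of closed subgroups of compact $p$-adic analytic groups, which gives the intersection formula $\bigcap_g (I_g\,kH) = \bigl(\bigcap_g I_g\bigr)\,kH$ for any finite family of left ideals $I_g\lhd kK$. Applied to $I_g = Q^g\cap kK$ this yields
\[ P\cap kH = \Bigl(\bigcap_{g\in G} Q^g\cap kK\Bigr)\,kH = (P\cap kK)\,kH, \]
which combined with the reduction above finishes the argument.

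The only nontrivial ingredient in this plan is the flatness of $kH$ over $kK$; everything else is a formal manipulation using Lemma \ref{lem: G-prime stuff}(ii), normality of $H$ and $K$ in $G$, and the hypothesised control identities for $P$ and $Q$. I expect that is where any real work lies, though for closed subgroups of compact $p$-adic analytic groups it can simply be cited from Lazard's theory rather than reproved.
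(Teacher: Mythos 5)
Your proposal is correct and follows essentially the same route as the paper: both arguments write $P\cap kH = \bigcap_{g\in G} Q^g$ via Lemma \ref{lem: G-prime stuff}(ii), conjugate the control identity $Q=(Q\cap kK)kH$ using normality of $H$ and $K$, and then pass the intersection through the extension from $kK$ to $kH$. The only difference is that you make explicit the step $\bigcap_g (I_g\,kH)=\bigl(\bigcap_g I_g\bigr)kH$ and justify it by freeness of $kH$ over $kK$, whereas the paper absorbs this into one line of its displayed chain of equalities; your version is if anything the more careful one (noting only that the $I_g=Q^g\cap kK$ should be taken as two-sided ideals for the basis argument to apply).
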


\begin{proof}
By Lemma \ref{lem: G-prime stuff}(ii), we have $Q^\circ = P\cap kH$, and so
\begin{align*}
(P\cap kK)kG &= ((P\cap kH)\cap kK)kG\\
&= (Q^\circ \cap kK)kG\\
&= (Q\cap kK)^\circ kG&\mbox{as } K \mbox{ is normal in } G\\
&= ((Q\cap kK)kH)^\circ kG&\mbox{as } H \mbox{ is normal in } G\\
&= Q^\circ kG &\mbox{as } Q \mbox{ is controlled by } K\\
&= (P\cap kH)kG = P &\mbox{as } P \mbox{ is controlled by } H.
\end{align*}
\end{proof}

Now back to:

\begin{thm}\label{thm: control, o.s. case}
Let $G$ be a nilpotent-by-finite, orbitally sound compact $p$-adic analytic group, $k$ a finite field of characteristic $p > 2$, and $P$ an almost faithful prime ideal of $kG$. Then $P$ is controlled by $\Delta$.
\end{thm}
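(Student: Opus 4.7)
The plan is to prove this by combining the two control results already established in this section with the patching lemma, taking a stepwise descent $G \geq H = \fn(G) \geq \Delta$.

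First I would invoke Proposition \ref{propn: af primes controlled by FNp} directly, which (under exactly the hypotheses of the theorem) tells us that $P$ is controlled by $H := \fn(G)$, i.e.\ $(P\cap kH)kG = P$. So the remaining content of the theorem is to descend one step further, from $H$ to $\Delta$.

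Next, let $Q$ be a minimal prime of $kH$ lying over the $G$-prime ideal $P\cap kH$ (Lemma \ref{lem: G-prime stuff}(i)). The key intermediate claim is that $Q$ is an \emph{almost faithful} prime of $kH$. By Lemma \ref{lem: G-prime stuff}(ii) we have $\bigcap_{g\in G}Q^g = P\cap kH$, which passes through $(\cdot)^\dagger$ to give $(Q^\dagger)^\circ = P^\dagger \cap H$; since $P$ is almost faithful this core is finite, and since $Q^\dagger$ has finitely many $G$-conjugates (as $G$ permutes the finitely many minimal primes over $P\cap kH$), the orbital soundness of $G$ forces $[Q^\dagger : (Q^\dagger)^\circ]<\infty$, so $Q^\dagger$ is finite. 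This is essentially the same almost-faithfulness argument already used in the proof of Proposition \ref{propn: af primes controlled by FNp}.

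Now $H = \fn(G)$ satisfies $H = \fn(H)$, so Proposition \ref{propn: control, FNp}(ii) applies to the almost faithful prime $Q$ of $kH$: we conclude that $Q$ is controlled by $\Delta(H)$. Since $H$ is open in $G$ we have $\Delta(H) = \Delta(G) = \Delta$ by \cite[Lemma 1.3(ii)]{woods-struct-of-G}, so in particular $\Delta\leq H$, and $Q$ is controlled by $\Delta$. Finally, applying Lemma \ref{lem: control, n-by-f, patching G - H - Delta} to the chain $\Delta \leq H \leq G$ with $K = \Delta$, the two control statements "$P$ is controlled by $H$" and "$Q$ is controlled by $\Delta$" patch together to give that $P$ is controlled by $\Delta$, completing the proof.

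The main obstacle is the intermediate almost-faithfulness claim for $Q$: one needs both the correspondence of Lemma \ref{lem: G-prime stuff}(ii) (to compute the $G$-core of $Q^\dagger$) and the orbital soundness hypothesis on $G$ (to pass from the $G$-core being finite to $Q^\dagger$ itself being finite). Once that is in hand, everything else is a mechanical assembly of Proposition \ref{propn: af primes controlled by FNp}, Proposition \ref{propn: control, FNp}(ii) and Lemma \ref{lem: control, n-by-f, patching G - H - Delta}.
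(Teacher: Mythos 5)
Your proposal is correct and follows essentially the same route as the paper: Proposition \ref{propn: af primes controlled by FNp} to get control by $H=\fn(G)$, the orbital-soundness argument showing a minimal prime $Q$ over $P\cap kH$ is almost faithful, Proposition \ref{propn: control, FNp}(ii) to get control of $Q$ by $\Delta$, and Lemma \ref{lem: control, n-by-f, patching G - H - Delta} to patch. The only difference is that you spell out the intermediate almost-faithfulness step and the identification $\Delta(H)=\Delta(G)$ in slightly more detail than the paper does.
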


\begin{proof}
Proposition \ref{propn: af primes controlled by FNp} shows that $P$ is controlled by $H$. Let $Q$ be a minimal prime of $kH$ above $P\cap kH$: then $Q^\circ = P\cap kH$ by Lemma \ref{lem: G-prime stuff}(ii), so we see that $(Q^\dagger)^\circ = P^\dagger\cap H$ is finite, so (as $G$ is orbitally sound) $Q^\dagger$ must also be finite. Hence, as $Q$ is almost faithful, Proposition \ref{propn: control, FNp}(ii) shows that it is controlled by $\Delta$. Now Lemma \ref{lem: control, n-by-f, patching G - H - Delta} applies.
\end{proof}


%
%
%
%
%

\subsection{Primes adjacent to faithful primes}

We begin with a property of the ``finite-by-(nilpotent $p$-valuable) radical" operator.

\begin{lem}\label{lem: quotients of FNp}
Let $G$ be a nilpotent-by-finite, orbitally sound compact $p$-adic analytic group, let $N$ be a normal subgroup of $G$ which is contained in $\Delta$, and let $F$ be a finite normal subgroup of $G$. Then the following three statements hold.
\begin{itemize}
\item[(i)] $\fn(G/F) = \fn(G)/F$.
\item[(ii)] Suppose that $\fn(G/\mathrm{i}_\Delta(N)) = \fn(G)/\mathrm{i}_\Delta(N)$. Then $\fn(G/N) = \fn(G)/N$.
\item[(iii)] Suppose $N$ is $\Delta$-isolated. Then we have either $\fn(G/N) = \fn(G)/N$ or $N = \Delta = \fn(G)$.
\end{itemize}
\end{lem}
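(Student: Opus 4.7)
The plan is to first apply part (i) of the lemma to reduce to the case $\Delta^+ = 1$, so that $\Delta$ is torsion-free free abelian pro-$p$ of finite rank. In this reduced setting, known structural results give $\Delta \subseteq H := \fn(G)$, and since $H$ is nilpotent $p$-valuable---hence torsion-free nilpotent pro-$p$---the FC-centre of $H$ equals its centre, so $\Delta(H) = Z(H)$; combining with \cite[Lemma 1.3(ii)]{woods-struct-of-G} yields $\Delta = Z(H)$. Since $Z(H)$ is isolated in the torsion-free nilpotent $H$, and $N$ is $\Delta$-isolated by hypothesis, one deduces that $N$ is also $H$-isolated: any $x\in H$ with $x^k\in N$ first lies in $Z(H)=\Delta$, and then in $N$. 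The standard result that quotients of nilpotent $p$-valuable groups by closed isolated normal subgroups are nilpotent $p$-valuable then gives that $H/N$ is nilpotent $p$-valuable, so $H/N \subseteq \fn(G/N)$.

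For the reverse inclusion I would write $L/N = \fn(G/N) \supseteq H/N$ and let $F_L/N$ be the maximal finite normal subgroup of $L/N$, so that $L/F_L$ is nilpotent $p$-valuable and $F_L \lhd G$. One checks that $F_L\cap H = N$, since $(F_L\cap H)/N$ is a finite subgroup of the torsion-free $H/N$; consequently $[H,F_L]\subseteq H\cap F_L = N$, so $F_L$ acts on $H$ by automorphisms in $\Aut_N(H):=\{\alpha\in\Aut(H) : h^{-1}\alpha(h)\in N \text{ for all } h\in H\}$, through the finite quotient $F_L/\mathbf{C}_{F_L}(H)$.

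The crux is that, when $H$ is non-abelian (i.e.\ $H\supsetneq\Delta$), every $\alpha\in\Aut_N(H)$ fixes $[H,H]$ pointwise. Combined with $N\subseteq Z(H)$, this makes the assignment $\alpha\mapsto\phi_\alpha := (h\mapsto h^{-1}\alpha(h))$ a genuine group homomorphism $\Aut_N(H) \to \mathrm{Hom}(H^{\mathrm{ab}},N)$, whose target is a torsion-free $\mathbb{Z}_p$-module. The image of the finite group $F_L/\mathbf{C}_{F_L}(H)$ must therefore be trivial, so $F_L\subseteq\mathbf{C}_G(H)$. A further step, using the orbital soundness of $G$ to constrain how a finite normal extension of $H$ can sit inside $G$, then forces $F_L/N = 1$, so $F_L=N$ and hence $L/N = L/F_L$ is nilpotent $p$-valuable. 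Since $\Delta^+(L)\subseteq\Delta^+(G)=1$, the group $L$ is itself torsion-free nilpotent pro-$p$ of finite rank---hence nilpotent $p$-valuable---contradicting the maximality of $H=\fn(G)$ under the assumption $L\supsetneq H$. In the abelian case $H=\Delta$, by contrast, the map $\alpha\mapsto\phi_\alpha$ need not be a group homomorphism and $\Aut_N(H)$ may contain nontrivial finite subgroups, so the above collapse can fail, leading precisely to the exceptional conclusion $N=\Delta=\fn(G)$.

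The main obstacle is the step using orbital soundness to conclude $F_L=N$ from $F_L\subseteq\mathbf{C}_G(H)$; this likely requires careful analysis of how $F_L/N$ embeds into $G/H$ modulo the centralising condition, exploiting that $\Delta^+(G)=1$. A secondary difficulty is ensuring that the assignment $\alpha\mapsto\phi_\alpha$ is a group homomorphism: this demands $\alpha$ to act trivially on $N$, which is automatic when $N\subseteq[H,H]$ but must be verified in general, perhaps by further exploiting $H$-isolation of $N$ inside $Z(H)$.
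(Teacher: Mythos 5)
Your setup (reduce to $\Delta^+=1$ via (i), note $\Delta = Z(H)$ for $H=\fn(G)$, deduce that $N$ is $H$-isolated and hence $H/N$ is nilpotent $p$-valuable, so $\fn(G)/N\leq \fn(G/N)$) is sound and parallels the paper, but the reverse inclusion has a genuine gap at its last step, and it sits exactly where the real difficulty of (iii) lives. Even granting $F_L=N$, the inference ``$L/N$ nilpotent $p$-valuable and $\Delta^+(L)=1$, hence $L$ is torsion-free nilpotent, hence nilpotent $p$-valuable'' is a non sequitur: an extension of a torsion-free abelian group by a nilpotent $p$-valuable group need not be nilpotent. The model counterexample is $L=\mathbb{Z}_p\rtimes\mathbb{Z}_p$ with the second factor acting on $N=\mathbb{Z}_p$ through a nontrivial $(p-1)$-st root of unity $\zeta$: here $L$ is torsion-free and $L/N\cong\mathbb{Z}_p$ is nilpotent $p$-valuable, yet $[L,N]=N$ (as $\zeta-1$ is a unit), so $L$ is not nilpotent. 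Nothing in your argument excludes elements of $L\setminus H$ acting on $N\leq Z(H)$ by such finite-order scalars while being ``trivial mod $N$''; your $\Aut_N(H)$ analysis constrains only $F_L$ (the elements with $[H,\cdot]\subseteq N$), not the rest of $L$. This scalar action is precisely the phenomenon behind the exceptional case, and controlling it is where the paper must invoke the structure theory of orbitally sound groups: conjugation by $z\in\widehat{H}\setminus H$ acts on $H/H'$ by $x\mapsto x^{\zeta}$ with $\zeta\neq 1$ a root of unity (\cite[Lemma 4.2]{woods-struct-of-G}, \cite[Lemma 3.3]{woods-extensions-of-primes}); if $H/H'N$ has positive rank this contradicts $\widehat{H}/N=\fn(G/N)$ being finite-by-nilpotent, so $H=\mathrm{i}_H(H'N)$, and then \cite[Lemma 3.5]{woods-extensions-of-primes} forces $H$ abelian, giving $N=\Delta=\fn(G)$. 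So the dichotomy is governed by the rank of $H/H'N$, not simply by whether $H$ is abelian; your claim that the non-abelian case always goes through requires exactly this missing input.

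The secondary issue you flag yourself is also real: $\alpha\mapsto\phi_\alpha$ is a group homomorphism only if $\alpha$ fixes the relevant part of $N$ pointwise, and since $N$ need not lie in $[H,H]$ this is not automatic -- indeed it is essentially the same triviality you are trying to establish, obstructed again by possible root-of-unity actions on $N$. (By contrast, the step you thought hardest is easy: once $F_L\leq\mathbf{C}_G(H)$, openness of $H$ gives $\mathbf{C}_G(H)\leq\Delta\leq H$, whence $F_L=F_L\cap H=N$ with no appeal to orbital soundness.) Finally, note that the statement has three parts: you use (i) (which the paper treats as immediate from the construction of $\fn$), but you never address (ii), which the paper proves by a Nine Lemma comparison of $G/N$ with $G/\mathrm{i}_\Delta(N)$.
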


\begin{proof}
$ $

\begin{itemize}
\item[(i)] This is clear from the construction of $\fn(G)$ (see \cite[Definition 5.3]{woods-struct-of-G}).
\item[(ii)] First, note that $\fn(G)/N$ is a quotient of a finite-by-(nilpotent $p$-valuable) normal subgroup of $G$, and hence is still a finite-by-(nilpotent $p$-valuable) normal subgroup of $G/N$, i.e. $\fn(G)/N \leq \fn(G/N)$. As both of these are of finite index in $G$, it will suffice to show that these indices are equal.

Consider the natural surjection $\alpha: G/N\to G/\mathrm{i}_\Delta(N)$. We can see that
$$\ker\alpha = \mathrm{i}_\Delta(N)/N = \Delta^+(\Delta/N) \leq \Delta^+(G/N)\leq \fn(G/N)$$
is a finite normal subgroup of $G/N$, and hence from (i) we see that
$$\frac{\fn(G/N)}{\mathrm{i}_\Delta(N)/N} \cong \fn(G/\mathrm{i}_\Delta(N)).$$
That is, the restricted map $$\alpha|_{\fn(G/N)} : \fn(G/N) \to \fn(G/\mathrm{i}_\Delta(N))$$ is also surjective with kernel $\mathrm{i}_\Delta(N)/N$. Hence we have the following commutative diagram, in which the first two rows are exact, all three columns are exact, and $C_1$ and $C_2$ are the cokernels of the vertical maps.

\centerline{
\xymatrix{
& 1 \ar[d] & 1 \ar[d] & 1 \ar[d] &\\
1 \ar[r] & \mathrm{i}_\Delta(N)/N \ar[r]\ar@{=}[d] & \fn(G/N) \ar[r]\ar[d] & \fn(G/\mathrm{i}_\Delta(N)) \ar[r]\ar[d] & 1\\
1 \ar[r] & \mathrm{i}_\Delta(N)/N \ar[r]\ar[d] & G/N \ar[r]\ar[d] & G/\mathrm{i}_\Delta(N) \ar[r]\ar[d]& 1\\
1 \ar[r] & 1 \ar[r]\ar[d] & C_1 \ar[r]\ar[d] & C_2 \ar[r]\ar[d] & 1\\
& 1 & 1 & 1 &
}
}

By the Nine Lemma \cite[Chapter XII, Lemma 3.4]{maclane}, the third row is now also exact, so that $C_1 \cong C_2$. But by assumption, $C_2 \cong G/\fn(G)$, and hence
$$[G/N: \fn(G/N)] = |C_1| = [G: \fn(G)] = [G/N, \fn(G)/N],$$
as required.
\item[(iii)]
\textbf{Case 1.} First, assume that $\Delta^+ = 1$.

Write $H = \fn(G)$, and $\widehat{H}$ for the preimage of $\widehat{H}/N = \fn(G/N)$.

If $G = \fn(G)$, then we clearly have $\fn(G/N) = \fn(G)/N$ for any closed normal subgroup $N$. So suppose that $H\lneq \widehat{H} \leq G$, and take some $z\in \widehat{H}\setminus H$. Now conjugation by $z$ induces the automorphism $x\mapsto x^\zeta$ on $H/H'$ (where $H'$ denotes the isolated derived subgroup), and hence also on $H/H'N$, for some $\zeta\in t(\mathbb{Z}_p^\times)$ \cite[Lemma 4.2]{woods-struct-of-G} satisfying $\zeta \neq 1$ \cite[Lemma 3.3]{woods-extensions-of-primes}.

If $H/H'N$ has nonzero rank, we may take an element $x\in H$ whose image in $H/H'N$ has infinite order; and now the image in $\widehat{H}/H'N$ of $\overline{\langle x,z \rangle}$ is not finite-by-nilpotent, contradicting the definition of $\widehat{H}$. So we must have $H = \mathrm{i}_H(H'N)$.

In particular, this implies that $H = \mathrm{i}_H(H'Z)$, where $Z = Z(H) = \Delta(G)$, and so, by \cite[Lemma 3.5]{woods-extensions-of-primes}, we see that $H$ is abelian, i.e. $H = \Delta$. Furthermore, this implies that $H' = 1$, and as $N$ is already $H$-isolated (because $\Delta$ is $H$-isolated), the equality $H = \mathrm{i}_H(H'N)$ simplifies to give $H = N$. This is what we wanted to prove.

\textbf{Case 2.} Now suppose instead that $\Delta^+ \neq 1$. As $N$ is isolated in $G$, we see that
\begin{itemize}
\item $\Delta^+ \leq N$, and $N/\Delta^+$ is isolated normal inside $G/\Delta^+$, contained in $\Delta/\Delta^+$;
\item $\Delta^+(G/\Delta^+) = 1$;
\item $\Delta(G/\Delta^+) = \Delta/\Delta^+ = Z(\fn(G)/\Delta^+)$;
\item $\fn(G/\Delta^+) = \fn(G)/\Delta^+$;
\end{itemize}
and so the result follows by applying Case 1 to $G/\Delta^+$.\qedhere
\end{itemize}
\end{proof}

\begin{rk}
If $G$ is a compact $p$-adic analytic group, $H$ is a closed normal subgroup, and $Q$ is a $G$-stable ideal of $kH$, then $Q^\dagger = (Q+1)\cap H$ is normal in $G$.
\end{rk}

\begin{lem}\label{lem: extension of G-prime from kDelta to kG is prime}
Let $G$ be a nilpotent-by-finite, orbitally sound compact $p$-adic analytic group, and let $k$ be a finite field of characteristic $p>2$. If $Q$ is a $G$-prime ideal of $k\Delta$, and $\fn(G/Q^\dagger) = \fn(G)/Q^\dagger$, then $QkG$ is a prime ideal of $kG$.
\end{lem}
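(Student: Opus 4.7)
The plan is to reduce to the faithful case by quotienting out $N := Q^\dagger$, which by the Remark is a closed normal subgroup of $G$ and clearly lies in $\Delta$. The hypothesis is designed precisely to make this reduction well-behaved: writing $\overline{(-)}$ for images modulo $N$, we have $\overline{H} := \fn(G)/N = \fn(\overline{G})$ (a finite-by-(nilpotent $p$-valuable) open normal subgroup of $\overline{G}$), and $\overline{Q} \lhd k\overline{\Delta}$ is a faithful $\overline{G}$-prime ideal. Since $(N-1)kG \subseteq QkG$, primality of $QkG$ in $kG$ is equivalent to primality of $\overline{Q}k\overline{G}$ in $k\overline{G}$, so I work exclusively in the quotient.

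The extension from $k\overline{\Delta}$ to $k\overline{G}$ then proceeds in two stages. First, from $k\overline{\Delta}$ to $k\overline{H}$ via Proposition~\ref{propn: control, FNp}(i), applied to the finite-by-(nilpotent $p$-valuable) group $\overline{H}$. This requires the identification $\Delta(\overline{H}) = \overline{\Delta}$; the inclusion $\overline{\Delta} \leq \Delta(\overline{H})$ is immediate, and the reverse inclusion is the essential point at which the hypothesis enters, via Lemma~\ref{lem: centre = delta} and \cite[Lemma 1.3(ii), Theorem C]{woods-struct-of-G}. Taking a minimal prime $\overline{P}$ of $k\overline{\Delta}$ above $\overline{Q}$ (which is then automatically $\overline{H}$-prime, as primes are), Proposition~\ref{propn: control, FNp}(i) yields $\overline{P}k\overline{H}$ prime in $k\overline{H}$. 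Second, the prime extension theorem \cite[Theorem A]{woods-extensions-of-primes} extends this prime to $k\overline{G}$, provided it is almost faithful — which follows from Lemma~\ref{lem: centre = delta}: any closed normal subgroup of $\overline{H}$ meeting $\overline{\Delta}$ in a finite subgroup must lie in the finite group $\Delta^+(\overline{H})$ (its image in the torsion-free nilpotent $p$-valuable quotient $\overline{H}/\Delta^+(\overline{H})$ would be a normal subgroup avoiding the centre $\overline{\Delta}/\Delta^+(\overline{H})$, hence trivial), so it suffices to track how much of $\overline{P}^\dagger$ can be nontrivial. Finally, to pass from primality of $\overline{P}k\overline{G}$ back to that of $\overline{Q}k\overline{G}$, I would invoke Passman's \cite[Corollary 14.8]{passmanICP} at the level of $\overline{\Delta} \lhd \overline{G}$, and handle the primality of $\overline{P}k\mathrm{Stab}_{\overline{G}}(\overline{P})$ by iterating the same argument inside the stabilizer.

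The main obstacle is the identification $\Delta(\overline{H}) = \overline{\Delta}$: pinning down exactly how the hypothesis $\fn(G/Q^\dagger) = \fn(G)/Q^\dagger$ forces $\Delta$ to commute with the possibly non-isolated, possibly infinite-kernel quotient by $N$ is the delicate structural heart of the proof. A secondary subtlety is tracking almost-faithfulness across the $\overline{G}/\overline{H}$-orbit of minimal primes, since a minimal prime above a faithful $\overline{G}$-prime need not itself be faithful; the argument must carefully combine Lemma~\ref{lem: G-prime stuff}(ii) with the nilpotent-centre obstruction above to force $(\overline{P}k\overline{H})^\dagger$ to be finite.
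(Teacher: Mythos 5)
Your reduction to the faithful case (quotienting by $N=Q^\dagger$ and noting that primality of $QkG$ is equivalent to primality of $\overline{Q}k\overline{G}$) is fine, but the step you yourself flag as ``the main obstacle'' --- the identification $\Delta(\overline{H})=\overline{\Delta}$ --- is not merely delicate: it is false in general, even under the hypothesis $\fn(G/Q^\dagger)=\fn(G)/Q^\dagger$. Take $G$ a Heisenberg-type nilpotent $p$-valuable group with centre $Z=\Delta(G)$ and let $Q$ be the augmentation ideal of $kZ$, so $N=Q^\dagger=Z$. Then $\fn(G)=G$ and $G/N$ is abelian $p$-valuable, so the hypothesis holds, yet $\overline{\Delta}=\Delta/N=1$ while $\Delta(\overline{G})=\overline{G}$. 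The tools you cite cannot rescue this: \cite[Lemma 1.3(ii)]{woods-struct-of-G} controls $\Delta$ under passage to \emph{open subgroups}, and Lemma \ref{lem: centre = delta} controls the quotient by the \emph{finite} group $\Delta^+$, whereas here you quotient by a possibly infinite $N\leq\Delta$. Consequently $\overline{Q}$ (or a minimal prime $\overline{P}$ above it) is an ideal of $k\overline{\Delta}$, which may be a proper subring of $k[[\Delta(\overline{H})]]$, and Proposition \ref{propn: control, FNp}(i) does not apply to it; bridging from $k\overline{\Delta}$ to $k[[\Delta(\overline{H})]]$ would itself require a nontrivial primality result (and $\overline{\Delta}$ need not even be isolated in $\Delta(\overline{H})$). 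So as written the proposal has a genuine gap at its central step.

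The paper's proof avoids this by not quotienting first. It writes $Q=\bigcap_g I^g$, groups the minimal primes into $H$-orbits for $H=\fn(G)$, and applies Proposition \ref{propn: control, FNp}(i) \emph{inside the original group}, where $\Delta(H)=\Delta$ genuinely holds, to get that each $P_ikH$ is prime; it then reduces via \cite[Corollary 14.8]{passmanICP} to the stabiliser $S$ of $P_i$, and only at that point quotients --- by $\mathfrak{p}^\dagger=P_i^\dagger$, not by $Q^\dagger$ --- using the hypothesis (together with Lemma \ref{lem: quotients of FNp}) solely to guarantee $\fn(S/\mathfrak{p}^\dagger)=\fn(S)/\mathfrak{p}^\dagger$, which is what the prime extension theorem needs. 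If you want to salvage your order of operations, you would have to replace the false identification $\Delta(\overline{H})=\overline{\Delta}$ by an argument handling $\overline{\Delta}\lneq\Delta(\overline{H})$; the cleaner fix is to reverse the order as the paper does.
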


\begin{rk}
The hypothesis
\begin{align}\label{eqn: hypothesis}
\fn(G/Q^\dagger) = \fn(G)/Q^\dagger\tag{$\ddagger$}
\end{align}
has the following consequence. Let $G$ be a nilpotent-by-finite, orbitally sound compact $p$-adic analytic group, $k$ a finite field of characteristic $p>2$, and let $P\lneq P'$ be adjacent prime ideals of $kG$, with $P$ almost faithful. Then $P$ is controlled by $\Delta$, by Theorem \ref{thm: control, o.s. case}. Set $Q := P'\cap k\Delta$. Consider $\mathrm{i}_\Delta(Q^\dagger)$: if this is not equal to $\Delta$, then by Lemma \ref{lem: quotients of FNp}(ii), (iii), the hypothesis (\ref{eqn: hypothesis}) is satisfied. So suppose it is equal to $\Delta$. Now, as $Q$ contains the ideal $\ker(kG\to k[[G/Q^\dagger]])$ (the augmentation ideal of $Q^\dagger$), if we further have that $\fn(G) = \Delta$, then $kG/Q$ is a \emph{finite} prime ring, which is therefore simple, and so $Q$ must be a \emph{maximal} ideal of $kG$ of $\mathrm{i}_G(\Delta) = G$; otherwise, we again have (\ref{eqn: hypothesis}) by Lemma \ref{lem: quotients of FNp}(ii), (iii).

That is, under these conditions, we always have (\ref{eqn: hypothesis}) unless $Q$ is a maximal ideal of $kG$ and $G$ is virtually abelian, in which case $Q^\dagger$ is open in $G$.
\end{rk}

\begin{proof}
Write $H = \fn(G)$.

As $Q$ is a $G$-prime, we may write it as $\bigcap_{g\in G} I^g$ for some minimal prime ideal $I$ above $Q$. Suppose the $G$-orbit of $I$ splits into distinct $H$-orbits $\mathcal{O}_1, \dots, \mathcal{O}_r$, and write $P_i := \bigcap_{A\in \mathcal{O}_i} A$. Then $P_i$ is an $H$-prime of $k\Delta$, and $\bigcap_{i=1}^r P_i = Q$. In particular, since $P_i$ is an $H$-prime of $k\Delta$, we have that $P_i kH$ is prime by Proposition \ref{propn: control, FNp}(i).

It remains to show that $\left(\bigcap_{g\in G} (P_i kH)^g\right) kG$ is prime. By \cite[Corollary 14.8]{passmanICP}, it suffices to show that $P_ikS$ is prime, where $S = \mathrm{Stab}_G(P_i)$.

Write $\mathfrak{p} = P_ikH$, and note that $\mathfrak{p}^\dagger = P_i^\dagger \leq \Delta$. Now, if $\fn(G)/\Delta^+$ is non-abelian, we have $\fn(S/\mathfrak{p}^\dagger) = \fn(S)/\mathfrak{p}^\dagger$. If, on the other hand, $\fn(G)/\Delta^+$ is abelian, then we must have $Q^\dagger \lneq \Delta$, and as $Q^\dagger$ is $H$-isolated orbital, we have $[\Delta: Q^\dagger] = \infty$. But as $G$ is orbitally sound, and $Q^\dagger = \bigcap_{g\in G} (P_i^\dagger)^g$, we must have that $Q^\dagger$ is open in $P_i^\dagger$, so that in particular $[\Delta: \mathfrak{p}^\dagger] = \infty$. Hence again we have $\fn(S/\mathfrak{p}^\dagger) = \fn(S)/\mathfrak{p}^\dagger$.

Write $\overline{(\cdot)}$ for the quotient map $S\to S/\mathfrak{p}^\dagger$. Now, to show that $P_ikS = \mathfrak{p}kS$ is prime, we need only show that $\overline{\mathfrak{p}}k\overline{S}$ is prime. But $\overline{\mathfrak{p}}$ is a faithful prime ideal of $k\overline{H}$, and $\overline{H} = \fn(\overline{S})$, so by \cite[Theorem A]{woods-extensions-of-primes}, we are done.
\end{proof}

\begin{lem}\label{lem: Q also controlled by Delta}
Let $k$ be a finite field of characteristic $p>2$. Let $G$ be a nilpotent-by-finite, orbitally sound compact $p$-adic analytic group, and let $P \lneq Q$ be adjacent prime ideals of $kG$, with $P$ almost faithful. Suppose that $Q$ is not a maximal ideal of $kG$. Then $Q$ is controlled by $\Delta$.
\end{lem}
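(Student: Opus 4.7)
The plan is to apply Lemma \ref{lem: extension of G-prime from kDelta to kG is prime} to the $G$-prime ideal $Q_0 := Q \cap k\Delta$ (which is $G$-prime by Lemma \ref{lem: G-prime stuff}(i)), sandwich the resulting prime ideal $Q_0 kG$ between $P$ and $Q$, and then use adjacency to force $Q_0 kG = Q$.

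First I would verify the hypothesis $(\ref{eqn: hypothesis})$ for $Q_0$, following the case analysis in the remark preceding Lemma \ref{lem: extension of G-prime from kDelta to kG is prime}. If $\mathrm{i}_\Delta(Q_0^\dagger) \neq \Delta$, then Lemma \ref{lem: quotients of FNp}(iii) applied to the $\Delta$-isolated subgroup $\mathrm{i}_\Delta(Q_0^\dagger)$ gives $\fn(G/\mathrm{i}_\Delta(Q_0^\dagger)) = \fn(G)/\mathrm{i}_\Delta(Q_0^\dagger)$ (the alternative $\mathrm{i}_\Delta(Q_0^\dagger) = \Delta$ is excluded by assumption), and part (ii) then delivers $(\ref{eqn: hypothesis})$. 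If instead $\mathrm{i}_\Delta(Q_0^\dagger) = \Delta$, the subcase $\fn(G) = \Delta$ would, as in the remark, make $kG/Q$ a finite prime (hence simple) ring, contradicting that $Q$ is not a maximal ideal; so $\fn(G) \neq \Delta$, and Lemma \ref{lem: quotients of FNp}(iii), (ii) once more yield $(\ref{eqn: hypothesis})$. Lemma \ref{lem: extension of G-prime from kDelta to kG is prime} now shows that $Q_0 kG$ is a prime ideal of $kG$.

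By Theorem \ref{thm: control, o.s. case}, $P$ is controlled by $\Delta$, so $P = (P \cap k\Delta)kG \subseteq Q_0 kG \subseteq Q$. The adjacency of $P \lneq Q$ forces $Q_0 kG \in \{P,Q\}$, and we are done once we rule out $Q_0 kG = P$.

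The main obstacle is this last step. Suppose for contradiction that $Q_0 kG = P$; contracting to $k\Delta$ yields $P \cap k\Delta = Q \cap k\Delta = Q_0$, and hence $Q^\dagger \cap \Delta = Q_0^\dagger \subseteq P^\dagger$ is finite. The key additional input is that in a nilpotent-by-finite compact $p$-adic analytic group, any infinite closed normal subgroup meets $\Delta$ in an infinite subgroup: fixing an open normal nilpotent subgroup $G_0 \lhd G$ and choosing the smallest $i$ with $N \cap Z_i(G_0)$ of positive rank produces a torsion-free element whose $G_0$-commutators lie in the finite subgroup $N \cap Z_{i-1}(G_0)$, and so which has finite $G$-conjugacy class and lies in $\Delta$. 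Applied to $N = Q^\dagger$, finiteness of $Q^\dagger \cap \Delta$ forces $Q^\dagger$ itself to be finite, so that $Q$ is almost faithful. Theorem \ref{thm: control, o.s. case} then gives $Q = (Q \cap k\Delta)kG = Q_0 kG = P$, contradicting $P \lneq Q$. Hence $Q_0 kG = Q$, which is what we wanted.
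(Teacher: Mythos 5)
Your proposal is correct and follows essentially the same route as the paper: sandwich $(Q\cap k\Delta)kG$ between $P$ and $Q$ using Theorem \ref{thm: control, o.s. case} and Lemma \ref{lem: extension of G-prime from kDelta to kG is prime} (with its remark), then rule out $(Q\cap k\Delta)kG = P$ by showing that $Q^\dagger\cap\Delta$ finite forces $Q^\dagger$ finite, whence $Q$ is almost faithful and controlled by $\Delta$, giving $Q = P$, a contradiction. The only cosmetic difference is in that last finiteness step, where the paper intersects $Q^\dagger$ with an open normal nilpotent $p$-valued subgroup $N$ and invokes the fact that a nontrivial normal subgroup of a nilpotent group meets its centre, while you prove the equivalent statement directly with an upper-central-series argument; both are fine.
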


\begin{proof}
$Q\cap k\Delta$ is a $G$-prime of $k\Delta$, and so $(Q\cap k\Delta)kG$ is prime by Lemma \ref{lem: extension of G-prime from kDelta to kG is prime} and the accompanying remark. But
$$P = (P\cap k\Delta)kG \leq (Q\cap k\Delta)kG \leq Q,$$
(with the equality as a result of Theorem \ref{thm: control, o.s. case}), and $P$ and $Q$ are adjacent, so $(Q\cap k\Delta)kG$ must equal either $P$ or $Q$.

Let us assume for contradiction that $(Q\cap k\Delta)kG = P$. Then we must have
$$P\cap k\Delta \leq Q\cap k\Delta \leq (Q\cap k\Delta)kG = P,$$
and by intersecting each of these with $k\Delta$, we see that $P\cap k\Delta = Q\cap k\Delta$. In particular, by taking $(\cdot)^\dagger$ of both sides of this equality, we see that $Q^\dagger\cap \Delta$ is finite (as $P$ is almost faithful).

Let $N$ be an open normal nilpotent $p$-valued subgroup of $G$, and let $Z = Z(N)$. By \cite[Lemma 1.3(ii)]{woods-struct-of-G}, $Z = \Delta(N)$ is a finite-index torsion-free subgroup of $\Delta$, and so $Q^\dagger \cap Z = 1$. Now, as $N$ is nilpotent and the normal subgroup $Q^\dagger \cap N$ has trivial intersection with its centre, \cite[5.2.1]{rob} implies that $Q^\dagger \cap N = 1$, and hence $Q^\dagger$ must be a finite normal subgroup of $G$. So $Q^\dagger \leq \Delta^+$, and in particular $Q^\dagger = Q^\dagger\cap\Delta$, which we earlier determined is finite. Hence $Q$ is almost faithful, and must be controlled by $\Delta$ by Theorem \ref{thm: control, o.s. case}. In particular, we must have $P\cap k\Delta \neq Q\cap k\Delta$. But this contradicts our assumption.
\end{proof}

\section{Catenarity}\label{section: catenarity}

\subsection{The orbitally sound case: plinths and a height function}

Much of the material in this subsection is adapted from \cite{roseblade}.

Unless stated otherwise, throughout this section, $G$ is an arbitrary compact $p$-adic analytic group, and $k$ is a finite field of characteristic $p$. We start by outlining our plan of attack:

\begin{lem}\label{lem: nice height function implies catenary}
Let $R$ be a ring in which every prime ideal has finite height. Suppose we are given a function $h: \mathrm{Spec}(R)\to \mathbb{N}$ satisfying
\begin{itemize}
\item $h(P) = 0$ whenever $P$ is a minimal prime of $R$,
\item $h(P') = h(P) + 1$ for each pair of adjacent primes $P\lneq P'$ of $R$.
\end{itemize}
Then $R$ is a catenary ring.
\end{lem}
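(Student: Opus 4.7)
The plan is essentially a telescoping argument, so the proof is short; the content of the lemma is really just that the existence of such a height-like function formally forces catenarity.

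First I would unpack the definition: to show $R$ is catenary, I need to take two arbitrary saturated chains of primes
\[
P = P_0 \lneq P_1 \lneq \dots \lneq P_r = P', \qquad P = Q_0 \lneq Q_1 \lneq \dots \lneq Q_s = P',
\]
and prove $r = s$. Saturation means each consecutive pair $P_i \lneq P_{i+1}$ is a pair of adjacent primes in $R$, and likewise for the $Q_j$.

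Next, I would apply the second hypothesis on $h$ term by term along the first chain: $h(P_{i+1}) = h(P_i) + 1$ for each $i = 0, \dots, r-1$. A trivial induction (telescoping) then gives
\[
h(P') = h(P_r) = h(P_0) + r = h(P) + r.
\]
The same reasoning applied to the second chain yields $h(P') = h(P) + s$. Both quantities $h(P)$ and $h(P')$ are natural numbers (finite, since every prime has finite height and so $h$ lands in $\mathbb{N}$), so I may subtract to conclude $r = s$, as required.

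There is essentially no obstacle here: the first hypothesis (that $h$ vanishes on minimal primes) is not even used in the argument, so I would simply note that it is part of the standing assumptions but plays no role — its purpose is to pin down the normalisation of $h$ so that in practice $h$ coincides with classical height whenever we construct it. The real work of the paper will be in constructing such an $h$ on $\mathrm{Spec}(kG)$; this lemma merely packages up why that construction will suffice.
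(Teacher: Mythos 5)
Your argument is correct and is exactly the intended one: the paper's own proof of this lemma is simply the word ``Obvious'', and your telescoping of $h(P_{i+1}) = h(P_i)+1$ along each saturated chain is the standard way to spell that out. Your side remark that the normalisation $h(P)=0$ on minimal primes is not needed for catenarity itself is also accurate.
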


\begin{proof}
Obvious.
\end{proof}

\begin{lem}
$kG$ has finite classical Krull dimension, i.e. the maximal length of any chain of prime ideals is bounded.
\end{lem}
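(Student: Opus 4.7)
The plan is to reduce the claim to a computation of the (Gabriel--Rentschler) Krull dimension, and then to bound the classical Krull dimension above by it using the results of \S1.3.

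First I would pass to an open normal uniform pro-$p$ subgroup $H$ of $G$ (such a subgroup exists in every compact $p$-adic analytic group by Lazard; see \cite[\S 4.1]{DDMS}). Write $d = \dim(H)$ for its dimension as a $p$-adic analytic group. Then $kH$, filtered by powers of its augmentation ideal, has associated graded ring $\gr(kH) \cong k[X_1, \dots, X_d]$, a commutative polynomial ring. By \cite[6.5.6]{MR} (already invoked in the proof of Lemma \ref{lem: commutative prime facts}(iii)) we have $\Kdim(kH) \leq \Kdim(\gr(kH)) = d$. Since $kG$ is a free $kH$-module of finite rank $[G:H]$, Corollary \ref{cor: Kdims are equal} (or directly Lemma \ref{lem: Kdim and moving modules around}(i)) gives $\Kdim(kG) = \Kdim(kH) \leq d < \infty$.

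Next I would deduce that the classical Krull dimension of $kG$ is bounded by $\Kdim(kG)$, by the standard trick: given any chain of primes
$$P_0 \lneq P_1 \lneq \dots \lneq P_n$$
of $kG$, I claim $\Kdim(kG/P_{i+1}) < \Kdim(kG/P_i)$ for each $i$. Indeed, $kG/P_i$ is a prime noetherian ring, hence $\alpha_i$-homogeneous as a right module over itself (Example \ref{exs: alpha-hgs rings}(ii)) for $\alpha_i := \Kdim(kG/P_i)$. The image of $P_{i+1}$ in $kG/P_i$ is a nonzero two-sided ideal of a prime Goldie ring, so contains a regular element $c$; then Lemma \ref{lem: Kdim results}(i) applied to $kG/P_i$ forces $\Kdim\bigl((kG/P_i)/c(kG/P_i)\bigr) < \alpha_i$, and hence also $\Kdim(kG/P_{i+1}) < \alpha_i$. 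Iterating, $n \leq \Kdim(kG) \leq d$, which bounds the length of every chain of primes uniformly.

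The two potentially fiddly points are (a) the claim that the graded ring of $kH$ (for $H$ uniform) is a polynomial ring over $k$, which is standard but depends on the precise choice of filtration, and (b) the passage from Gabriel--Rentschler Krull dimension to classical Krull dimension, which requires only the existence of a regular element in every nonzero ideal of a prime noetherian ring together with the $\alpha$-homogeneity already catalogued in \S1.3. Neither should present serious obstacles; the main content is really Lazard's theorem together with the general filtration inequality.
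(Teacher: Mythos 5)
Your overall route is the same as the paper's: bound the classical Krull dimension above by the Gabriel--Rentschler Krull dimension, and bound the latter by $\dim G$. The paper handles both steps by citation ([MR, Lemma 6.4.5] for the first, and [MR, Proposition 6.6.16(ii)] together with the known bound $\Kdim(\mathbb{F}_pG)\leq\dim G$ for the second), so your filtration argument on a uniform open normal subgroup and your regular-element argument are just these citations unpacked; both are fine in substance.

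There is one genuine slip in the write-up: Lemma \ref{lem: Kdim results}(i) asserts the \emph{converse} of what you use. It says that $\Kdim(R/xR)<\Kdim(R)$ implies $x$ is regular, whereas your chain argument needs that $c$ regular implies $\Kdim(R/cR)<\Kdim(R)$. The implication you need is true and standard, but you should prove or cite it correctly: since $c$ is regular, left multiplication by $c^i$ gives an isomorphism $R/cR\cong c^iR/c^{i+1}R$ of right $R$-modules, so if $\Kdim(R/cR)$ were equal to $\alpha_i=\Kdim(R)$ the descending chain $R\supseteq cR\supseteq c^2R\supseteq\cdots$ would have infinitely many factors of Krull dimension $\alpha_i$, contradicting $\Kdim(R)\leq\alpha_i$. (Alternatively cite \cite[Lemma 6.3.9]{MR} or the corresponding result in \cite[\S 15]{GW}.) With that repair, and noting that $kG/P_{i+1}$ is a quotient of $(kG/P_i)/c(kG/P_i)$, your argument is complete.
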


\begin{proof}
The classical Krull dimension of $kG$ is bounded above by $\Kdim(kG)$ by \cite[Lemma 6.4.5]{MR}, which is equal to $\Kdim(\mathbb{F}_p G)$ by \cite[Proposition 6.6.16(ii)]{MR}, and this is bounded above by the \emph{dimension} (in the sense of \cite[Theorem 8.36]{DDMS}) of $G$, which is finite by definition (see the remarks after \cite[Definition 3.12]{DDMS}).
\end{proof}

\begin{defn}\label{defn: plinths}
Let $V$ be a $\mathbb{Q}_p G$-module, and suppose it has finite dimension as a vector space over $\mathbb{Q}_p$. Take a chain $$0 = V_0 \lneq V_1 \lneq \dots \lneq V_r = V$$ of $G$--orbital subspaces -- that is, $\mathbb{Q}_p$-vector subspaces of $V$ with finitely many $G$-conjugates, or equivalently $\mathbb{Q}_p$-vector subspaces that are $\mathbb{Q}_p N$-submodules for some open subgroup $N$ of $G$. Assume further that this chain is \emph{saturated}, in the sense that it cannot be made longer by the addition of some $G$--orbital subspace $V_i \lneq V' \lneq V_{i+1}$. Such a chain is necessarily finite, as it is bounded above in length by $\dim_{\mathbb{Q}_p}(V) + 1$. We call the number $r$ the \emph{$G$--plinth length} of $V$, written $p_G(V)$. If $p_G(V) = 1$, we say that $V$ is a \emph{plinth} for $G$.
\end{defn}

\begin{rk}
The number $r$ is independent of the $V_i$ chosen. Indeed, fix a \emph{longest possible} chain $$0 = V_0 \lneq V_1 \lneq \dots \lneq V_r = V$$ of $G$-orbital subspaces, and let $G_0$ be the intersection of the normalisers $\mathbf{N}_G(V_i)$, i.e. the largest subgroup of $G$ such that each $V_i$ is a $\mathbb{Q}_p G_0$-module. $G_0$ is open in $G$. Now, given any chain $$0 = W_0 \lneq W_1 \lneq \dots \lneq W_s = V$$ of $G$-orbital subspaces, take $H_0 = \bigcap_{j=1}^s \mathbf{N}_G(W_j)$, and note that $G_0\cap H_0$ is a finite-index open subgroup of $G$ that normalises each $V_i$ and $W_j$. Hence, by the Jordan-H\"older theorem \cite[Theorem 4.11]{GW}, the chain $W_j$ may be refined to a chain of length $r$; so if the chain $W_j$ is saturated, then $s = r$.
\end{rk}

\begin{defn}\label{defn: G-groups}
A \emph{$G$-group} is a topological group $H$ endowed with a continuous action of $G$. For example, closed subgroups of $G$, and quotients of $G$ by closed normal subgroups of $G$, are $G$-groups under the action of conjugation.

Let $H$ be a nilpotent-by-finite compact $p$-adic analytic group with a continuous action of $G$. We aim to define $p_G(H)$. In fact, as plinths are insensitive to finite factors, we may immediately replace $H$ by the open subgroup formed by the intersection of the (finitely many) $G$-conjugates of any given open normal nilpotent uniform subgroup of $H$. Then there is a series
\begin{align}\label{eqn: H series thing}
1 = H_0 \lhd H_1 \lhd \dots \lhd H_n = H
\end{align}
of $G$-subgroups such that $A_i = H_i/H_{i-1}$ is abelian for each $i = 1, \dots, n$. Let $V_i = A_i \tensor{\mathbb{Z}_p} \mathbb{Q}_p$ for each $i = 1, \dots, n$, with $G$--action given by conjugation. In this case, we define $$p_G(H) = \sum_{i=1}^n p_G(V_i).$$
\end{defn}

\begin{lem}
$p_G(H)$ is well-defined, and does not depend on the series (\ref{eqn: H series thing}).
\end{lem}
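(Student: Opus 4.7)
The plan is to reduce the well-definedness of $p_G(H)$ to two ingredients: additivity of plinth length on short exact sequences of finite-dimensional $\mathbb{Q}_p G$-modules, and a Schreier-type refinement applied to series of $G$-subgroups.

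First I would establish additivity: given a short exact sequence $0 \to V' \to V \to V'' \to 0$ of finite-dimensional $\mathbb{Q}_p G$-modules, one has $p_G(V) = p_G(V') + p_G(V'')$. Concatenating a saturated chain of $G$-orbital subspaces of $V'$ with the preimage in $V$ of a saturated chain of $G$-orbital subspaces of $V''$ produces a chain of $G$-orbital subspaces of $V$; choosing an open subgroup $G_0 \leq G$ that normalises every term of both chains, and invoking Jordan--H\"older in the abelian category of finite-dimensional $\mathbb{Q}_p G_0$-modules, identifies the length of this chain with $p_G(V)$ by the same reasoning given in the remark preceding this lemma.

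Now suppose we are given two series
$$1 = H_0 \lhd H_1 \lhd \dots \lhd H_n = H, \qquad 1 = H'_0 \lhd H'_1 \lhd \dots \lhd H'_m = H$$
of $G$-subgroups of the (replaced) uniform $H$ with abelian quotients $A_i = H_i/H_{i-1}$ and $A'_j = H'_j/H'_{j-1}$. I would form the Schreier refinements
$$H_{i,j} := H_{i-1}(H_i \cap H'_j), \qquad H'_{j,i} := H'_{j-1}(H'_j \cap H_i).$$
These are closed $G$-subgroups (as the relevant intersections and normal products are $G$-stable and topologically closed in our setting); each successive quotient of the refinement is a subquotient of $A_i$ or $A'_j$, hence abelian; and the Zassenhaus lemma supplies isomorphisms $H_{i,j}/H_{i,j-1} \cong H'_{j,i}/H'_{j,i-1}$ that are equivariant for a sufficiently small open subgroup $G_0 \leq G$ fixing every term in sight.

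Finally, since $\mathbb{Q}_p$ is flat over $\mathbb{Z}_p$, the filtration of $A_i$ by the $H_{i,j}/H_{i-1}$ tensors up to a filtration of $A_i \tensor{\mathbb{Z}_p} \mathbb{Q}_p$ whose successive quotients are $(H_{i,j}/H_{i,j-1}) \tensor{\mathbb{Z}_p} \mathbb{Q}_p$. Iterating additivity gives
$$\sum_{i=1}^n p_G\bigl(A_i \tensor{\mathbb{Z}_p} \mathbb{Q}_p\bigr) = \sum_{i,j} p_G\bigl((H_{i,j}/H_{i,j-1}) \tensor{\mathbb{Z}_p} \mathbb{Q}_p\bigr) = \sum_{j=1}^m p_G\bigl(A'_j \tensor{\mathbb{Z}_p} \mathbb{Q}_p\bigr),$$
which is the required equality. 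Independence from the initial choice of open normal uniform subgroup is handled by the same device, using that any two such subgroups share a common open refinement on which the corresponding series restrict compatibly. The main technical obstacle is ensuring that the Zassenhaus isomorphisms are genuinely equivariant for a fixed open subgroup of $G$, so that they induce equalities (not merely coincidences of dimension) of the $G_0$-module plinth lengths that additivity is being applied to.
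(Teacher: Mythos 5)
Your proof is correct and follows essentially the same route as the paper, whose entire proof is ``Apply the Jordan--H\"older theorem, as in the remark above'': you have simply spelled out what that means, namely Schreier refinement of the two series, canonical (hence automatically equivariant) Zassenhaus isomorphisms, passage to a common open normalising subgroup $G_0$, and additivity of $p_G$ on short exact sequences (which the paper records separately as Lemma \ref{lem: plinth length is additive on ses}). The extra care you take with flatness of $\mathbb{Q}_p$ over $\mathbb{Z}_p$ and with independence of the chosen uniform open subgroup is sound and fills in details the paper leaves implicit.
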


\begin{proof}
Apply the Jordan-H\"older theorem, as in the remark above.
\end{proof}

For our purposes, the most important property of $p_G$ is that it is additive on short exact sequences of $G$-groups, which also follows from the Jordan-H\"older theorem. We record this as:

\begin{lem}\label{lem: plinth length is additive on ses}
Suppose that $\xymatrix{1\ar[r]& A\ar[r]& B\ar[r] & C\ar[r]& 1}$ is a short exact sequence of $G$-groups. Then $p_G(A) + p_G(C) = p_G(B)$.\qed
\end{lem}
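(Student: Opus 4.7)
The plan is to construct, for the middle group $B$, a single series of $G$-subgroups with abelian factors that visibly splits as the concatenation of a series for $A$ and (the lift of) a series for $C$; the additivity will then drop out of Definition \ref{defn: G-groups}.

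First, I will reduce to the case where $A$, $B$, and $C$ may all be taken nilpotent. Pick an open normal uniform subgroup of $B$ and replace it by the intersection of its (finitely many) $G$-conjugates to obtain an open $G$-invariant nilpotent subgroup $U \leq B$. Then $A \cap U$ and $\pi(U)$ (where $\pi \colon B \twoheadrightarrow C$ denotes the given surjection) are open $G$-invariant nilpotent subgroups of $A$ and $C$ respectively, fitting into a short exact sequence $1 \to A \cap U \to U \to \pi(U) \to 1$ of $G$-groups. By the well-definedness of $p_G$ established just above (via Jordan--H\"older), the three plinth lengths are unchanged upon passing to this reduced triple, so without loss of generality I may assume $A$, $B$, $C$ are themselves nilpotent.

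Next, I will choose $G$-invariant series with abelian factors inside $A$ and $C$ separately --- for instance, the terms of their upper central series, which are characteristic in $A$ and $C$ respectively and therefore stable under the continuous $G$-action. Writing these as $1 = A_0 \lhd \dots \lhd A_m = A$ and $1 = C_0 \lhd \dots \lhd C_n = C$, I will lift the second series to $B$ by setting $B_j := \pi^{-1}(C_j)$. Since $\pi$ is $G$-equivariant, each $B_j$ is a $G$-subgroup of $B$; furthermore $B_0 = A$, $B_n = B$, and $B_j/B_{j-1} \cong C_j/C_{j-1}$ is abelian. Concatenating yields a series $1 = A_0 \lhd \dots \lhd A_m = B_0 \lhd B_1 \lhd \dots \lhd B_n = B$ of $G$-subgroups of $B$ with abelian factors.

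Reading $p_G(B)$ off this concatenated series via Definition \ref{defn: G-groups} splits the resulting sum of plinth lengths into the contributions from the $A_i/A_{i-1}$ (which sum to $p_G(A)$) and from the $B_j/B_{j-1} \cong C_j/C_{j-1}$ (which sum to $p_G(C)$), giving the claimed identity $p_G(B) = p_G(A) + p_G(C)$. The only real obstacle I anticipate is the bookkeeping for the initial reduction step: confirming that passing from the original triple to $(A \cap U,\, U,\, \pi(U))$ truly does not alter any of the three plinth lengths. This should follow uniformly from the Jordan--H\"older argument invoked for the well-definedness of $p_G$.
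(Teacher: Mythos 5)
Your proposal is correct and matches the paper's (essentially unwritten) argument: the paper simply asserts additivity as a consequence of the Jordan--H\"older theorem, and your concatenated series $1 = A_0 \lhd \dots \lhd A_m = \pi^{-1}(C_0) \lhd \dots \lhd \pi^{-1}(C_n) = B$ is exactly the standard way to realise that, with the reduction to an open $G$-invariant nilpotent subgroup handled just as in Definition \ref{defn: G-groups}. The only point you flag as a worry --- that passing to $(A\cap U, U, \pi(U))$ preserves all three plinth lengths --- is precisely the ``plinths are insensitive to finite factors'' / well-definedness assertion the paper has already established via Jordan--H\"older, so nothing further is needed.
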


We now define Roseblade's function $\lambda$. (Later, we will show that, in the case when $G$ is nilpotent-by-finite and orbitally sound, $\lambda$ is actually equal to the height function on $\mathrm{Spec} (kG)$.)

\begin{defn}\label{defn: lambda}
$$
\lambda(P) = \left\{
\begin{array}{ll}
p_G(P^\dagger) + \lambda(P^\pi) & P^\dagger \neq 1 \\
h_G(P\cap k\Delta)& P^\dagger = 1,
\end{array} \right.
$$
where $P^\pi$ is the image of $P$ under the map $$\pi: kG \to kG/(P^\dagger - 1)kG \cong k[[G/P^\dagger]].$$
\end{defn}

This definition is recursive, in that if $P$ is an unfaithful prime ideal, then $\lambda(P)$ is defined with reference to $\lambda(P^\pi)$; but $P^\pi$ is then a faithful prime ideal of $kG^\pi$, so this process terminates after at most two steps.

We make the following remark on this definition immediately:

\begin{lem}\label{lem: if P faithful, then lambda = h}
Let $G$ be a nilpotent-by-finite, orbitally sound compact $p$-adic analytic group, $k$ a finite field of characteristic $p>2$, and $P$ a faithful prime of $k\Delta$. Then $\lambda(P) = h(P)$.
\end{lem}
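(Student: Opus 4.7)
Since $P$ is faithful, $P^\dagger = 1$, so Definition~\ref{defn: lambda} gives $\lambda(P) = h_G(P \cap k\Delta)$, while Theorem~\ref{thm: control, o.s. case} ensures that $P = (P \cap k\Delta) kG$. The plan is to prove the two inequalities of $h(P) = h_G(P \cap k\Delta)$ by moving chains of primes back and forth between $kG$ and $k\Delta$, with the control theorem and the extension theorem (Lemma~\ref{lem: extension of G-prime from kDelta to kG is prime}) ensuring both sides work out.

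For $h(P) \leq h_G(P \cap k\Delta)$, I would start from a strict chain $P_0 \lneq \cdots \lneq P_r = P$ in $\mathrm{Spec}(kG)$. Each $P_i \subseteq P$ forces $P_i^\dagger \subseteq P^\dagger = 1$, so every $P_i$ is faithful and (again by Theorem~\ref{thm: control, o.s. case}) controlled by $\Delta$. The contractions $P_i \cap k\Delta$ form a chain of $G$-primes by Lemma~\ref{lem: G-prime stuff}(i), and any putative coincidence $P_i \cap k\Delta = P_{i+1} \cap k\Delta$ would, via control, force $P_i = P_{i+1}$; so strictness is preserved.

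For $h(P) \geq h_G(P \cap k\Delta)$, I would start from a chain of $G$-primes $Q_0 \lneq \cdots \lneq Q_m = P \cap k\Delta$ in $k\Delta$. Each $Q_i^\dagger \subseteq (P \cap k\Delta)^\dagger = P^\dagger = 1$ is trivial, so hypothesis~(\ref{eqn: hypothesis}) of Lemma~\ref{lem: extension of G-prime from kDelta to kG is prime} is automatic and each $Q_i kG$ is prime in $kG$; since $Q_i kG \subseteq P$, each extension is itself faithful and controlled by $\Delta$. The main obstacle is proving that the extensions still form a \emph{strict} chain, i.e.\ that $Q \mapsto Q kG$ is injective on faithful $G$-primes of $k\Delta$. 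My approach is to descend to $H = \fn(G)$: inside $kH$, Proposition~\ref{propn: control, FNp} combined with the commutative correspondence of Lemma~\ref{lem: passing between abelian groups and isolated subgroups} (applied to $\Delta/\Delta^+$, which is central in $H/\Delta^+$ by Lemma~\ref{lem: centre = delta}) gives a genuine contraction-extension bijection between almost faithful primes of $kH$ and almost faithful $H$-primes of $k\Delta$; this transfers to the $kG$ setting using the $G/H$-orbit machinery of Lemma~\ref{lem: G-prime stuff}(ii) and the finiteness of $[G:H]$. The required injectivity, and hence the strict chain of $Q_i kG$ below $P$, follow.
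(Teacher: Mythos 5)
Your proposal is correct and is essentially the paper's own proof unpacked: the paper simply observes that Theorem~\ref{thm: control, o.s. case} and Lemma~\ref{lem: extension of G-prime from kDelta to kG is prime} give an inclusion-preserving bijection between faithful primes of $kG$ and faithful $G$-primes of $k\Delta$, which is exactly the content of your two chain-lifting inequalities. The only step you over-complicate is the injectivity of $Q\mapsto QkG$: the detour through $\fn(G)$ is unnecessary (and Lemma~\ref{lem: passing between abelian groups and isolated subgroups} does not literally apply there, since $H/\Delta^+$ is nilpotent $p$-valuable rather than abelian); the direct fact $QkG\cap k\Delta=Q$, which holds because $kG$ is free as a $k\Delta$-module and which the paper itself uses implicitly in the proof of Theorem~\ref{thm: catenarity in o.s. case}, already gives the required strictness.
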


\begin{proof}
$\lambda(P)$ is defined to be $h_G(P\cap k\Delta)$. But, by Theorem \ref{thm: control, o.s. case} and Lemma \ref{lem: extension of G-prime from kDelta to kG is prime}, we see that there is a one-to-one, inclusion-preserving correspondence between faithful prime ideals of $kG$ and faithful $G$-prime ideals of $k\Delta$, so that $h_G(P\cap k\Delta) = h(P)$.
\end{proof}

We return to the general case of $G$ an arbitrary compact $p$-adic analytic group.

\begin{lem}\label{lem: assume wlog P is faithful for lambda}
Let $P \lneq Q$ be neighbouring prime ideals of $kG$, and write $$\pi: kG \to kG/(P^\dagger - 1)kG \cong k[[G/P^\dagger]].$$ Then $$\lambda(Q) - \lambda(P) = \lambda(Q^\pi) - \lambda(P^\pi).$$
\end{lem}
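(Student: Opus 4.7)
The plan is to reduce to a straightforward case analysis on the relationship between $P^\dagger$ and $Q^\dagger$, using only the recursive definition of $\lambda$ together with additivity of the plinth length on short exact sequences.

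First, I would observe that if $P^\dagger = 1$, then $\pi$ is (after identification) the identity, so $P^\pi = P$ and $Q^\pi = Q$, and the equality is immediate. I can therefore assume $P^\dagger \neq 1$. Since $P \subseteq Q$ gives a surjection $kG/P \twoheadrightarrow kG/Q$, the kernel of $G \to (kG/P)^\times$ is contained in the kernel of $G \to (kG/Q)^\times$; that is, $P^\dagger \subseteq Q^\dagger$, so in particular $Q^\dagger \neq 1$ and both $\lambda(P)$ and $\lambda(Q)$ are computed via the first branch of Definition~\ref{defn: lambda}.

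Next, I would split into two subcases. Write $\pi' : kG \to k[[G/Q^\dagger]]$ and $\bar{G} = G/P^\dagger$. In the subcase $Q^\dagger = P^\dagger$, the maps $\pi$ and $\pi'$ coincide, so $Q^{\pi'} = Q^\pi$ and directly
\[
\lambda(Q) - \lambda(P) = \big(p_G(Q^\dagger) + \lambda(Q^\pi)\big) - \big(p_G(P^\dagger) + \lambda(P^\pi)\big) = \lambda(Q^\pi) - \lambda(P^\pi).
\]
In the subcase $P^\dagger \lneq Q^\dagger$, the identification $k[[G/Q^\dagger]] \cong k\bar{G}/(Q^\dagger/P^\dagger - 1)k\bar{G}$ shows that $(Q^\pi)^\dagger = Q^\dagger/P^\dagger \neq 1$, and that the composite of $\pi$ with the reduction modulo $(Q^\pi)^\dagger - 1$ is exactly $\pi'$. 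Hence $(Q^\pi)^{\pi''} = Q^{\pi'}$, where $\pi''$ denotes this second quotient. Since the $G$-action on $Q^\dagger/P^\dagger$ factors through $\bar{G}$, the two plinth lengths $p_G$ and $p_{\bar{G}}$ agree on it. Applying Definition~\ref{defn: lambda} twice and invoking Lemma~\ref{lem: plinth length is additive on ses} to the short exact sequence $1 \to P^\dagger \to Q^\dagger \to Q^\dagger/P^\dagger \to 1$ of $G$-groups yields
\[
\lambda(Q) = p_G(P^\dagger) + p_G(Q^\dagger/P^\dagger) + \lambda(Q^{\pi'}), \qquad \lambda(Q^\pi) = p_G(Q^\dagger/P^\dagger) + \lambda(Q^{\pi'}),
\]
so $\lambda(Q) - \lambda(Q^\pi) = p_G(P^\dagger) = \lambda(P) - \lambda(P^\pi)$, which rearranges to the desired identity.

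The argument is essentially bookkeeping, so I do not expect any serious obstacle. The only point requiring care is verifying that $p_G$ and $p_{\bar{G}}$ agree on $G$-groups on which the $G$-action factors through $\bar{G}$, which is immediate from Definition~\ref{defn: plinths} once one notes that a subspace is $G$-orbital if and only if it is $\bar{G}$-orbital under such a factoring action.
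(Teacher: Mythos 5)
Your proof is correct and follows essentially the same route as the paper: both arguments rest on factoring the quotient by $Q^\dagger$ through the quotient by $P^\dagger$, the identification $(Q^\pi)^\dagger = Q^\dagger/P^\dagger$, and additivity of $p_G$ on the short exact sequence $1\to P^\dagger\to Q^\dagger\to Q^\dagger/P^\dagger\to 1$. The paper simply runs the computation uniformly (treating $p_G(1)=0$ and the identity quotient as degenerate instances) rather than splitting into your three cases, and your explicit remark that $p_G$ and $p_{\bar G}$ agree when the action factors through $\bar G$ is a point the paper leaves implicit.
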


\begin{proof}
Firstly, as $P \leq Q$, we have $P^\dagger \leq Q^\dagger$, so the map $$\rho: kG \to k[[G/Q^\dagger]]$$ factors as

\centerline{
\xymatrix{
kG\ar@{->}[r]^-{\pi}\ar@{->}@/_1pc/[rr]_{\rho}& k[[G/P^\dagger]]\ar@{->}[r]^{\sigma}& k[[G/Q^\dagger]].
}
}

We now compute $\lambda(Q) - \lambda(P)$ using Definition \ref{defn: lambda}:
\begin{align*}
\lambda(Q) - \lambda(P) &= p_G(Q^\dagger) - p_G(P^\dagger) + \lambda(Q^\rho) - \lambda(P^\pi) & \\
&= p_G((Q^\dagger)^\pi) + \lambda(Q^\rho) - \lambda(P^\pi) & \mbox{by Lemma \ref{lem: plinth length is additive on ses}} \\
&= p_G((Q^\dagger)^\pi) + \lambda(Q^{\pi\sigma}) - \lambda(P^\pi) & \mbox{by definition of } \rho \\
&= p_G((Q^\pi)^\dagger) + \lambda((Q^\pi)^\sigma) - \lambda(P^\pi) & \mbox{as } (Q^\dagger)^\pi = (Q^\pi)^\dagger \\
&= \lambda(Q^\pi) - \lambda(P^\pi) & \mbox{by Definition \ref{defn: lambda}.\!\!}&\qedhere
\end{align*}
\end{proof}

\begin{rk}
Suppose $G$ is a nilpotent-by-finite compact $p$-adic analytic group, and suppose we are given a subquotient $A$ of $G$ which is a plinth, with $G$-action induced from the conjugation action of $G$ on itself. Then it is easy to see that $\dim_{\mathbb{Q}_p}(A\tensor{\mathbb{Z}_p} \mathbb{Q}_p) = 1$. (Roseblade calls such plinths \emph{centric}.) Indeed, suppose $A = H/K$, where $H$ and $K$ are closed normal subgroups of $G$ with $K$ contained in $H$. Then we may replace $G$ by an open normal nilpotent uniform subgroup $G'$, and $A$ by $A' = H'/K'$, where $H' = H\cap G'$ and $K' =\mathrm{i}_{H'} (K\cap G')$; after doing this, we still have that $A'$ is a plinth for $G'$, and that $\dim_{\mathbb{Q}_p}(A\tensor{\mathbb{Z}_p} \mathbb{Q}_p) = \dim_{\mathbb{Q}_p}(A'\tensor{\mathbb{Z}_p} \mathbb{Q}_p)$. But, as $G'/K'$ is nilpotent, and $A'$ is a non-trivial normal subgroup, $A'$ must meet the centre $Z(G'/K')$ non-trivially; and as $A'$ is torsion-free, we must have that $A'\cap Z(G'/K')$ is a plinth for $G'$, and so must be equal to $A'$. Hence $G'$ centralises $A'$, and its plinth length is simply equal to its rank.
\end{rk}

Again, we will write $(-)^\circ$ to mean $\bigcap_{g\in G} (-)^g$.

\begin{lem}\label{lem: U is a G-prime}
Let $G$ be a nilpotent-by-finite compact $p$-adic analytic group. Let $U$ be a $G$-prime of $k\Delta$, and write $\rho: k\Delta \to k[[\Delta/U^\dagger]]$. Then $h(U) = h_G(U^\rho) + p_G(U^\dagger)$.
\end{lem}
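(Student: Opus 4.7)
The plan is to decompose any saturated chain of $G$-primes $0 \lneq U_1 \lneq \dots \lneq U_s = U$ in $k\Delta$ at the augmentation ideal $\omega := \ker\rho = (U^\dagger-1)k\Delta$, counting the part above $\omega$ via $h_G(U^\rho)$ and the part below $\omega$ via the plinth length of $U^\dagger$.

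First, I would reduce to the case $\Delta^+ = 1$ using the Matrix Units Lemma together with \cite[Theorems A, C]{woods-prime-quotients}, as in the proof of Proposition \ref{propn: control, FNp}: $G$-primes of $k\Delta$ meeting a given block are identified (via central idempotents) with $G$-primes of a commutative power series ring $k'[[\Delta/\Delta^+]]$ over a finite extension $k'$ of $k$, and this identification respects both $(-)^\dagger$ and the map $\rho$ up to replacing $G$ by an open subgroup (harmless for $p_G$). So we may assume $\Delta$ is free abelian pro-$p$ of finite rank and $k\Delta$ is a commutative local complete noetherian ring.

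Second, since $U \supseteq U^\dagger - 1$, we have $\omega \subseteq U$. By Lemma \ref{lem: passing between abelian groups and isolated subgroups} applied to the closed isolated subgroup $U^\dagger \leq \Delta$, the map $I \mapsto I/\omega$ is a bijection between $G$-primes of $k\Delta$ containing $\omega$ and $G$-primes of $k[[\Delta/U^\dagger]]$, sending $U$ to $U^\rho$. Hence the longest chain of $G$-primes in $k\Delta$ from (a minimal $G$-prime above) $\omega$ to $U$ has length exactly $h_G(U^\rho)$.

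Third, by the remark preceding this lemma, every plinth for $G$ arising as a subquotient of the nilpotent-by-finite group $\Delta$ is centric, so $p_G(U^\dagger)$ equals the $\mathbb{Z}_p$-rank $r$ of $U^\dagger$. Choose a chain $1 = F_0 \lneq F_1 \lneq \dots \lneq F_r = U^\dagger$ of $G$-stable closed isolated subgroups of $\Delta$ with each $F_i/F_{i-1}$ of rank one (refine a plinth series and take intersections over $G$-orbits; this preserves isolation and remains in $U^\dagger$ since $U^\dagger$ is $G$-stable). Then the augmentation ideals $J_i := (F_i-1)k\Delta$ are $G$-stable prime ideals of $k\Delta$ by Lemma \ref{lem: passing between abelian groups and isolated subgroups}, with $J_0 = 0$ and $J_r = \omega$, and Lemma \ref{lem: commutative prime facts}(iii) gives $h(J_i) = h(0 \cap kF_i) + i$ — in particular the chain $J_0 \lneq J_1 \lneq \dots \lneq J_r$ is saturated of length $r = p_G(U^\dagger)$.

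Combining the two counts yields $h(U) = p_G(U^\dagger) + h_G(U^\rho)$, noting that Lemma \ref{lem: commutative prime facts}(ii) (together with the $G$-orbital reformulation of heights from Corollary \ref{cor: height and G-height of nearby things}) ensures that saturated chains of $G$-primes all have the same length, so inserting $\omega$ into any such chain does not change its length. I expect the main obstacle to be the bookkeeping in Step 3 — specifically, verifying that saturated chains of $G$-primes below $\omega$ are of length exactly $p_G(U^\dagger)$ and cannot be refined further; this ultimately reduces, via the isolated-subgroup correspondence, to the classical fact that adjacent primes in $k\Delta$ differ in height by exactly one (Lemma \ref{lem: commutative prime facts}(ii)).
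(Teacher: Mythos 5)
Your argument is essentially correct, but it takes a genuinely different route from the paper's. The paper's proof is a two-line dimension count: it passes to $A = Z(\Delta)$ (open in $\Delta$, so $G$-heights and plinth lengths are unchanged by Corollary \ref{cor: height and G-height of nearby things}), takes a minimal prime $U_1$ of $kA$ above $U\cap kA$, and applies the formula $h(\mathfrak{p}) + \dim(kA/\mathfrak{p}) = r(A)$ of Lemma \ref{lem: commutative prime facts}(i) to both $U_1$ and $U_1^\rho$; since $kA/U_1 \cong kA^\rho/U_1^\rho$, subtracting gives $h(U_1) - h(U_1^\rho) = r(A) - r(A^\rho) = p_G(U^\dagger)$ via the centric-plinth remark. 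You instead split chains of $G$-primes at the augmentation ideal $\omega = \ker\rho$ and build an explicit saturated chain of length $p_G(U^\dagger)$ below $\omega$; this works, but it forces you to justify several things the paper's route sidesteps: (a) that $U^\dagger$ is isolated in $\Delta$ (true, but only because $\mathrm{char}\,k = p$ gives $x^{p^n}-1 = (x-1)^{p^n}$, so semiprimeness of $U$ forces isolation — worth stating, since otherwise $\omega$ need not be prime); (b) that adjacent $G$-primes of $k\Delta$ lie under adjacent primes, so that saturated $G$-prime chains all have the same length and $\omega$ can be inserted without loss — this needs the finiteness of $G$-orbits of ideals of $k\Delta$ (which holds because $G/\mathbf{C}_G(\Delta)$ is finite, as in Lemma \ref{lem: T standard iff Q standard}), and is exactly the bookkeeping you flag as the main obstacle. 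Two smaller points: your step 2 is the ordinary correspondence between primes of $k\Delta/\omega$ and primes of $k\Delta$ containing $\omega$, not Lemma \ref{lem: passing between abelian groups and isolated subgroups} (which concerns primes \emph{controlled by} an isolated subgroup); and your reduction to $\Delta^+ = 1$ via the Matrix Units Lemma is much heavier than needed — intersecting with the open abelian subgroup $Z(\Delta)$ and invoking Corollary \ref{cor: height and G-height of nearby things} achieves the same reduction with no block-theoretic overhead. What your approach buys is a concrete, chain-by-chain picture of where the two summands come from; what the paper's buys is brevity and the avoidance of any saturation arguments for $G$-primes.
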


\begin{proof}
Let $A = Z(\Delta)$, and let $U_1$ be a minimal prime of $kA$ above $U\cap kA$, so that $U\cap kA = U_1^\circ$. Then $h_G(U) = h(U_1)$ by Corollary \ref{cor: height and G-height of nearby things}, and so $h_G(U^\rho) = h(U_1^\rho)$. Now, from Lemma \ref{lem: commutative prime facts}(i), we have $h(U_1) + \dim(kA/U_1) = r(A)$ and $h(U_1^\rho) + \dim(kA/U_1) = r(A^\rho)$, from which we may deduce that $$h(U_1) = h(U_1^\rho) + r(A) - r(A^\rho).$$ But $r(A) - r(A^\rho) = r(A\cap \ker \rho) = p_G(U^\dagger \cap A)$ by the above remark. Now this is just $p_G(U^\dagger)$, as $A$ is open in $\Delta$.
\end{proof}

\begin{lem}\label{lem: lemma 29}
Let $G$ be arbitrary compact $p$-adic analytic. Let $H$ be a closed normal subgroup of $G$, and let $K$ be an open subgroup of $H$ which is normal in $G$. If $P$ is a $G$-prime ideal of $kH$, then $h_G(P) = h_G(P\cap kK)$.
\end{lem}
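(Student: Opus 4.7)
The plan is to reduce both sides of the equality to the height of an ordinary minimal prime, and then compare via Corollary~\ref{cor: height and G-height of nearby things}.

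The key ingredient is the following general identity: for any closed normal subgroup $L$ of $G$, any $G$-prime $U$ of $kL$, and any minimal prime $P_\star$ of $kL$ above $U$, we have $h_G(U) = h(P_\star)$. The easy direction $h_G(U)\leq h(P_\star)$ is a descent argument: given a chain $U_0\subsetneq\cdots\subsetneq U_r=U$ of $G$-primes, set $P_r:=P_\star$ and descendingly choose minimal primes $P_i\subseteq P_{i+1}$ of $kL$ above $U_i$; strictness holds, since $P_i=P_{i+1}$ would make $P_{i+1}$ minimal above both $U_i$ and $U_{i+1}$, forcing the $G$-orbits of their minimal primes to coincide and hence $U_i=U_{i+1}$ by Lemma~\ref{lem: G-prime stuff}(ii). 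The reverse direction $h_G(U)\geq h(P_\star)$ is a refinement argument: starting from a chain of primes $P^{(0)}\subsetneq\cdots\subsetneq P^{(r)}=P_\star$ in $kL$, the $G$-orbit intersections $(P^{(i)})^\circ$ form a chain of $G$-primes of $kL$ ending at $U$; one then argues, using the finite-$G$-orbit structure of minimal primes above a $G$-prime (which prevents strict containments within a single finite orbit), that this chain does not collapse.

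Applying this identity to $kH$ gives $h_G(P)=h(P_\star)$, for any minimal prime $P_\star$ of $kH$ above $P$. Since $K$ is open in $H$, Corollary~\ref{cor: height and G-height of nearby things} applied with the pair $(H,K)$ in place of $(G,H)$ yields $h(P_\star)=h(Q_\star')$, where $Q_\star'$ is any minimal prime of $kK$ above $P_\star\cap kK$. Applying the general identity to $kK$ then gives $h_G(P\cap kK)=h(Q_\star)$ for any minimal prime $Q_\star$ of $kK$ above $P\cap kK$.

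It remains to check $h(Q_\star)=h(Q_\star')$. Writing $P=\bigcap_{g\in G}P_\star^g$ (a finite intersection by Lemma~\ref{lem: G-prime stuff}(ii)), we have $P\cap kK=\bigcap_{g\in G}(P_\star\cap kK)^g$; so any prime of $kK$ containing $P\cap kK$ must contain some $(P_\star\cap kK)^g$, and any minimal such prime is in fact a minimal prime of $kK$ above $(P_\star\cap kK)^g$, hence a $G$-conjugate of a minimal prime above $P_\star\cap kK$. Since $G$-conjugation is a height-preserving ring automorphism, $h(Q_\star)=h(Q_\star')$, and combining gives $h_G(P)=h(P_\star)=h(Q_\star')=h(Q_\star)=h_G(P\cap kK)$, as required. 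The main technical point is the $\geq$ direction of the general identity, where one must ensure that passing from a chain of primes to the chain of $G$-orbit intersections does not collapse lengths; this hinges on the finite-orbit structure provided by Lemma~\ref{lem: G-prime stuff}(ii), and the rest of the argument is bookkeeping.
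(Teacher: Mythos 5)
There is a genuine gap, and it sits exactly where you flag the ``main technical point'': the $\geq$ direction of your general identity $h_G(U)=h(P_\star)$ for a \emph{closed} (not open) normal subgroup $L$. Given a chain of ordinary primes $P^{(0)}\subsetneq\cdots\subsetneq P^{(r)}=P_\star$ in $kL$, you pass to the orbit intersections $(P^{(i)})^\circ$ and claim non-collapse ``using the finite-$G$-orbit structure of minimal primes above a $G$-prime.'' But the intermediate primes $P^{(i)}$ are arbitrary primes of $kL$, not minimal primes above a $G$-prime, and since $[G:L]$ may be infinite their $G$-orbits may be infinite; Lemma \ref{lem: G-prime stuff}(ii) gives you nothing about them. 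Concretely, take $G=\mathbb{Z}_p^2\rtimes\mathbb{Z}_p$ with topological generator $\gamma$ of the second factor acting by $\mathrm{diag}(1+p,1)$, and $L=H=\mathbb{Z}_p^2$, so $kH=k[[x,y]]$ with $x=h_1-1$, $y=h_2-1$. The height-one prime $Q_0=(h_1h_2-1)$ has conjugates $Q_t=(h_1^{(1+p)^t}h_2-1)$, which are pairwise distinct (their linear parts $(1+p)^tx+y$ are pairwise non-proportional), so the orbit is infinite and $\bigcap_t Q_t=0$ because a nonzero element of the UFD $k[[x,y]]$ has only finitely many non-associate irreducible factors. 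The chain $0\subsetneq Q_0\subsetneq (x,y)$ then collapses to $0=0\subsetneq(x,y)$ under $(-)^\circ$. (In this example the identity survives because one can re-route through the $\gamma$-stable prime $(x)$, but your argument does not produce such a re-routing, and nothing you have written rules out that every maximal chain passes through infinite-orbit primes.)

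The paper's proof avoids this entirely by never comparing $h_G$ with the full ordinary height $h$ across a subgroup of infinite index. Instead it routes through $h_G^{\mathrm{orb}}$, the height computed using only chains of \emph{$G$-orbital} primes (Definition \ref{defn: G-height}): one has $h_G(P)=h_G^{\mathrm{orb}}(Q)$ for $Q$ a minimal prime above the $G$-prime $P$, because minimal primes above $G$-primes are $G$-orbital by Lemma \ref{lem: G-prime stuff}(ii), and for a chain of $G$-orbital primes the map $(-)^\circ$ is strictly monotonic (a collapse would force a strict containment between two conjugates in a single finite orbit, which is impossible since conjugation is an automorphism of finite order on the orbit). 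One then compares $h_G^{\mathrm{orb}}$ across the \emph{open} inclusion $K\leq H$ via Lemma \ref{lem: G-prime stuff}(i) and descends back to $h_G(P\cap kK)$. Your outer bookkeeping (the reduction to minimal primes and the conjugacy argument identifying $Q_\star$ with $Q_\star'$) is fine, but the central identity needs to be replaced by the $h_G^{\mathrm{orb}}$ version, or else you must supply a genuine argument that a maximal chain of primes under $P_\star$ can always be chosen to consist of $G$-orbital primes — which is essentially Corollary \ref{cor: height and G-height of nearby things} and is only available for open subgroups.
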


\begin{proof}
{}[Adapted from \cite[Lemma 29]{roseblade}.]
We know that $P = Q^\circ$ for some prime $Q$ of $kH$, and $Q\cap kK = \bigcap_{h\in H} V^h$ for some prime $V$ of $kK$. Hence $P\cap kK = V^\circ$. Then, writing $h_G^{\mathrm{orb}}$ for the height function on \emph{$G$-orbital primes},
\begin{align*}
h_G(P) &= h_G^{\mathrm{orb}}(Q) & \text{by Lemma \ref{lem: G-prime stuff}(ii)}\\
&= h_G^{\mathrm{orb}}(V) & \text{by Lemma \ref{lem: G-prime stuff}(i)}\\
&= h_G(P\cap kK) & \text{by Lemma \ref{lem: G-prime stuff}(ii)}.\!\!&\qedhere
\end{align*}
\end{proof}

Here, we deduce from Theorem \ref{thm: control, o.s. case} and \cite[proof of theorem H2]{roseblade} the following corollary:

\begin{thm}\label{thm: catenarity in o.s. case}
Let $G$ be a nilpotent-by-finite, orbitally sound compact $p$-adic analytic group, and $k$ a finite field of characteristic $p > 2$. Then $kG$ is a catenary ring.
\end{thm}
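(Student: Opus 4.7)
The plan is to apply Lemma \ref{lem: nice height function implies catenary} with $h = \lambda$ from Definition \ref{defn: lambda}. Since $kG$ has finite classical Krull dimension, $\lambda$ takes only finite values; it therefore suffices to verify (a) $\lambda(P) = 0$ for every minimal prime $P$ of $kG$, and (b) $\lambda(Q) = \lambda(P) + 1$ for every pair of adjacent primes $P \lneq Q$ of $kG$.

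For (b), Lemma \ref{lem: assume wlog P is faithful for lambda} reduces the computation of $\lambda(Q) - \lambda(P)$ to the analogous difference in $k[[G/P^\dagger]]$, so I may assume $P$ is faithful. Theorem \ref{thm: control, o.s. case} then gives $P = (P \cap k\Delta)kG$, and Lemma \ref{lem: if P faithful, then lambda = h} gives $\lambda(P) = h_G(P \cap k\Delta)$. I now split according to whether $Q$ is a maximal ideal. In the non-maximal subcase, Lemma \ref{lem: Q also controlled by Delta} yields $Q = (Q \cap k\Delta)kG$, and the correspondence coming from Theorem \ref{thm: control, o.s. case} together with Lemma \ref{lem: extension of G-prime from kDelta to kG is prime} makes $P \cap k\Delta \lneq Q \cap k\Delta$ adjacent $G$-primes of $k\Delta$. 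I then need
\begin{align*}
\lambda(Q) &= h_G(Q \cap k\Delta), \\
h_G(Q \cap k\Delta) - h_G(P \cap k\Delta) &= 1.
\end{align*}
The first identity is established by applying Lemma \ref{lem: U is a G-prime} to $U = Q \cap k\Delta$ (for which $U^\dagger = Q^\dagger \cap \Delta$) and matching the recursion $\lambda(Q) = p_G(Q^\dagger) + \lambda(Q^\pi)$ term by term: the plinth length $p_G(Q^\dagger)$ equals $p_G(U^\dagger)$ since every centric plinth of $G$ sits inside $\Delta$, and $\lambda(Q^\pi) = h_G(U^\rho)$ via Lemma \ref{lem: lemma 29} applied inside $k[[G/Q^\dagger]]$. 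The second identity — catenarity of the $G$-prime spectrum of $k\Delta$ — reduces via Lemma \ref{lem: lemma 29} (passing to the open characteristic central subgroup $Z = Z(\fn(G))$ of finite index in $\Delta$) to heights of prime ideals in the commutative pro-$p$ power series ring $kZ$, where Lemma \ref{lem: commutative prime facts}(ii) supplies the unit jump.

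The maximal subcase of (b) is handled directly: $kG/Q$ is simple artinian, which forces $Q^\dagger$ to be open in $G$, and $Q^\pi$ is then a minimal faithful prime of the finite-dimensional algebra $k[G/Q^\dagger]$, giving $\lambda(Q^\pi) = 0$ and $\lambda(Q) = p_G(Q^\dagger)$. Combined with the analogous calculation for $P$ via the short exact sequence $1 \to P^\dagger \to Q^\dagger \to Q^\dagger/P^\dagger \to 1$ and additivity of $p_G$ (Lemma \ref{lem: plinth length is additive on ses}), this yields $\lambda(Q) - \lambda(P) = 1$.

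For (a), a minimal prime $P$ of $kG$ has $P^\dagger$ finite (contained in $\Delta^+$), so $p_G(P^\dagger) = 0$, and $P^\pi$ is a minimal faithful prime of $k[[G/P^\dagger]]$; the correspondence of Lemma \ref{lem: extension of G-prime from kDelta to kG is prime} identifies $P^\pi \cap k[[\Delta(G/P^\dagger)]]$ with a minimal $G$-prime of that ring, so $\lambda(P^\pi) = 0$ and hence $\lambda(P) = 0$. The principal obstacle is the non-maximal subcase of (b): reconciling the recursive computation of $\lambda(Q)$ in $k[[G/Q^\dagger]]$ with the direct $G$-height in $k\Delta$ using Lemma \ref{lem: U is a G-prime}, with careful bookkeeping of plinth lengths against the finite part of $Q^\dagger$. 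The maximal subcase and the minimal-prime base case each require separate but shorter arguments.
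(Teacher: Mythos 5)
Your overall architecture is exactly the paper's: the height function $\lambda$, the reduction to faithful $P$ via Lemma \ref{lem: assume wlog P is faithful for lambda}, the passage to adjacent $G$-primes of $k\Delta$ via Theorem \ref{thm: control, o.s. case}, Lemma \ref{lem: Q also controlled by Delta} and Lemma \ref{lem: extension of G-prime from kDelta to kG is prime}, and the final matching of $\lambda(Q)$ with $h_G(Q\cap k\Delta)$ via Lemma \ref{lem: U is a G-prime}. However, there is a genuine gap in precisely the step you flag as the principal obstacle: the identity $\lambda(Q^\pi) = h_G(U^\rho)$. By definition $\lambda(Q^\pi) = h_G\big(Q^\pi\cap k[[\Delta(G/Q^\dagger)]]\big)$, whereas $h_G(U^\rho)$ is computed inside $k[[\Delta^\pi]]$, where $\Delta^\pi$ is the \emph{image} of $\Delta(G)$ in $G/Q^\dagger$. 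These two subgroups need not be commensurable: passing to the quotient by $Q^\dagger$ can enlarge $\Delta$ drastically, so $\Delta^\pi$ is in general of infinite index in $\Delta(G/Q^\dagger)$, and Lemma \ref{lem: lemma 29} --- which requires the smaller subgroup to be \emph{open} in the larger --- does not apply to this pair. The paper bridges the infinite-index step by using that $Q^\pi$ is \emph{controlled} by $\mathrm{i}_{G^\pi}(\Delta^\pi)$ (a consequence of Lemma \ref{lem: Q also controlled by Delta} and the remark following Lemma \ref{lem: extension of G-prime from kDelta to kG is prime}), descending to the abelian layer $A = Z(\Delta(G^\pi)) \geq B = A\cap \mathrm{i}_{G^\pi}(\Delta^\pi)$, and invoking Lemma \ref{lem: commutative prime facts}(iii) --- control by an isolated subgroup of a commutative power series ring preserves heights --- with Lemma \ref{lem: lemma 29} and Corollary \ref{cor: height and G-height of nearby things} supplying only the finite-index adjustments at either end. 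Without Lemma \ref{lem: commutative prime facts}(iii), or some substitute handling the infinite-index inclusion, this step fails.

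A secondary issue is the maximal subcase. Since you have already reduced to $P$ faithful, $P^\dagger = 1$ and $\lambda(P) = h_G(P\cap k\Delta)$, so additivity of $p_G$ over $1\to P^\dagger\to Q^\dagger\to Q^\dagger/P^\dagger\to 1$ gives no information; you would still have to show $p_G(Q^\dagger) = h_G(P\cap k\Delta)+1 = h_G(Q\cap k\Delta)$. The paper handles this by observing that $Q^\dagger$ open only occurs when $G = \mathrm{i}_G(\Delta)$, in which case both sides equal $r(Z(\Delta))$, and the unit jump still comes from the adjacency of the $G$-primes $P\cap k\Delta \lneq Q\cap k\Delta$. (Your claim that maximality of $Q$ forces $Q^\dagger$ to be open is true for $k$ finite, but it rests on the finiteness of simple $kG$-modules, which is not among the cited results.)
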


\begin{proof}
Let $P \lneq Q$ be neighbouring prime ideals of $kG$. We will first show that $\lambda(Q) = \lambda(P) + 1$.

By passing to $k[[G/P^\dagger]]$, we may assume that $P$ is a faithful prime ideal, by Lemma \ref{lem: assume wlog P is faithful for lambda}. Hence, by Theorem \ref{thm: control, o.s. case} (and as $p > 2$), we have that $(P\cap k\Delta)kG = P$. We also have either that $(Q\cap k\Delta)kG = Q$, by Lemma \ref{lem: Q also controlled by Delta}, or $Q^\dagger \geq \Delta$ by the remark of Lemma \ref{lem: extension of G-prime from kDelta to kG is prime}; and so, in either case, we have $P\cap k\Delta \lneq Q\cap k\Delta$.

We will now show that $P\cap k\Delta$ and $Q\cap k\Delta$ are \emph{neighbouring} $G$-primes of $k\Delta$. Suppose that they are not: then there must be a $G$-prime $J$ strictly between them, i.e. $P\cap k\Delta\lneq J\lneq Q\cap k\Delta$. Then, again by Lemma \ref{lem: extension of G-prime from kDelta to kG is prime} and the remark made there, we see that $JkG$ is a prime ideal of $kG$. Now it is clear that $P\leq JkG\leq Q$ by the previous paragraph; and if $JkG = Q$, then intersecting both sides with $k\Delta$ shows that $J = Q\cap k\Delta$, and likewise if $JkG = P$. Hence we must have $P\lneq JkG \lneq Q$, so that $P$ and $Q$ are \emph{not} neighbouring primes. But this contradicts our initial assumptions.

So we conclude that $h_G(Q\cap k\Delta) = h_G(P\cap k\Delta) + 1$. The right hand side is, by definition, just equal to $\lambda(P) + 1$; and we have $\lambda(Q) = \lambda(Q^\rho) + p_G(Q^\dagger)$, where $\rho: G\to G/Q^\dagger$. It remains to show that this is equal to $h_G(Q\cap k\Delta)$.

\textbf{Case 1.} $Q^\dagger$ is not open in $G$. Then $Q$ is controlled by $\Delta$ by Lemma \ref{lem: Q also controlled by Delta} and the remark of Lemma \ref{lem: extension of G-prime from kDelta to kG is prime}, and so $Q^\rho$ is controlled by $\Delta^\rho$, and in particular by $\mathrm{i}_{G^\rho}(\Delta^\rho)\leq \mathrm{i}_{G^\rho}(\Delta(G^\rho))$. Write $A = Z(\Delta(G^\rho))$ and $B = A\cap \mathrm{i}_{G^\rho}(\Delta^\rho)$: as $Q^\rho$ is controlled by $\mathrm{i}_{G^\rho}(\Delta^\rho)$, we have that $Q^\rho\cap kA$ is controlled by $B$. Furthermore, we can write $Q^\rho\cap kA = \mathfrak{q}^\circ$ for some prime $\mathfrak{q}$ of $kA$, so that $\mathfrak{q}$ is also controlled by $B$, and hence
\begin{align*}
\lambda(Q^\rho) &= h_G(Q^\rho \cap k[[\Delta(G^\rho)]]) & \text{by definition}\\
&= h_G(Q^\rho \cap kA) & \text{by Lemma \ref{lem: lemma 29}}\\
&= h(\mathfrak{q}) & \text{by Corollary \ref{cor: height and G-height of nearby things}}\\
&= h(\mathfrak{q}\cap kB) & \text{by Lemma \ref{lem: commutative prime facts}(iii)}\\
&= h_G(Q^\rho \cap kB) & \text{by Corollary \ref{cor: height and G-height of nearby things}}\\
&= h_G(Q^\rho \cap k[[\mathrm{i}_{G^\rho}(\Delta^\rho)]]) & \text{by Lemma \ref{lem: lemma 29}}\\
&= h_G(Q^\rho \cap k\Delta^\rho) & \text{by Lemma \ref{lem: lemma 29}}.
\end{align*}

We also have $p_G(Q^\dagger) = p_G((Q\cap k\Delta)^\dagger)$. Hence
$$\lambda(Q) = h_G((Q\cap k\Delta)^\rho) + p_G((Q\cap k\Delta)^\dagger).$$
Now we are done by Lemma \ref{lem: U is a G-prime}.

\textbf{Case 2.} $Q^\dagger$ is open in $G$. We have already seen that this case only occurs when $G = \mathrm{i}_G(\Delta)$, and so $\lambda(Q^\rho) = \lambda({0}) = 0$, and $p_G(Q^\dagger) = p_G(G)$, and $h_G(Q\cap k\Delta) = h_G(Q\cap kA)$ $= r(A)$. These are clearly equal, as $A$ is open in $G$.

In order to invoke Lemma \ref{lem: nice height function implies catenary}, it remains only to show that $\lambda(P) = 0$ when $P$ is a minimal prime. But as all minimal primes are induced from $\Delta^+$, this follows immediately from the definition of $\lambda$: we will have $P^\dagger\leq \Delta^+$ (and hence $p_G(P^\dagger) = 0$), and $P^\pi\cap k\Delta^\pi$ will be a minimal $G$-prime of $k\Delta^\pi$ (and hence $h_G(P^\pi\cap k\Delta^\pi) = 0$).
\end{proof}

\subsection{Vertices and sources}

We now study a more general setting. Let $G$ be an arbitrary compact $p$-adic analytic group, and $P$ an arbitrary prime ideal of $kG$.

\begin{rk}
Suppose $G$ is orbitally sound and nilpotent-by-finite, $N$ is a closed normal subgroup of $G$, and $I$ is a prime ideal of $kG$ with $N \leq I^\dagger$ and $[I^\dagger : N]<\infty$. Writing $\overline{(\cdot)}$ for the natural map $kG\to k[[G/N]]$, it is clear that the prime ideal $\overline{I}\lhd k[[G/N]]$ is almost faithful, and so, by Theorem \ref{thm: control, o.s. case}, is controlled by $\Delta(G/N)$, and that $I$ is the complete preimage in $kG$ of $\overline{I}$, and is therefore controlled by the preimage in $G$ of $\Delta(G/N)$.
\end{rk}

This motivates the following definition:

\begin{defn}
Let $I$ be an ideal of $kG$, and $N$ a closed subgroup of $G$. We say that $I$ is \emph{almost faithful mod $N$} if $I^\dagger$ contains $N$ as a subgroup of finite index. We also write $\nabla_G(N)$ for the subgroup of $\mathbf{N}_G(N)$ defined by $$\nabla_G(N) / N = \Delta(\mathbf{N}_G(N)/N).$$ Diagrammatically:

\centerline{
\xymatrix{
G\ar@{-}[d]\\
\mathbf{N}_G(N)\ar@{-}[dd]\ar@{..}[r]& \mathbf{N}_G(N)/N\ar@{-}[dd]\\
\\
\nabla_G(N)\ar@{-}[d]\ar@{..}[r]& \Delta(\mathbf{N}_G(N)/N)\ar@{-}[d]\\
N\ar@{-}[d]\ar@{..}[r]& N/N\\
1
}
}
\end{defn}

We will extend this notion to ideals $I$ with $I^\dagger$ \emph{contained in} $N$ as a subgroup of finite index.

\begin{lem}\label{lem: open char subgroup}
Let $H$ be an open subgroup of $N$. Then there exists an open characteristic subgroup $M$ of $N$ contained in $H$.
\end{lem}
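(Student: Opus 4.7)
The plan is to exhibit $M$ as a finite intersection of open subgroups of $N$, which will automatically be open, then verify it is characteristic by observing that continuous automorphisms of $N$ permute this finite collection. First I would note that $N$, being a closed subgroup of the compact $p$-adic analytic group $G$, is itself compact $p$-adic analytic, and in particular topologically finitely generated as a profinite group. Set $n = [N:H]$, which is finite because $H$ is open in the profinite group $N$.

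Next I would invoke the standard fact that a topologically finitely generated profinite group has only finitely many open subgroups of each finite index. Concretely, any open subgroup $K \leq N$ of index $m$ determines a continuous action of $N$ on the finite set $N/K$, hence a continuous homomorphism $N \to \mathrm{Sym}(N/K)$; and there are only finitely many continuous homomorphisms from a topologically finitely generated profinite group to a fixed finite group. Consequently, the collection
$$\mathcal{S} := \{ K \leq N \mid K \text{ open in } N,\ [N:K] \leq n\}$$
is finite.

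Finally I would set $M := \bigcap_{K \in \mathcal{S}} K$. As a finite intersection of open subgroups, $M$ is open in $N$; since $H \in \mathcal{S}$, we have $M \subseteq H$; and since any continuous automorphism of $N$ sends open subgroups of index $\leq n$ to open subgroups of index $\leq n$, it permutes $\mathcal{S}$ and hence fixes $M$ setwise. Thus $M$ is the desired open characteristic subgroup of $N$ contained in $H$. The argument is essentially routine; the only point worth flagging is the appeal to topological finite generation of $N$, without which the finiteness of $\mathcal{S}$ (and hence openness of $M$) would fail.
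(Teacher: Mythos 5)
Your proof is correct and is essentially the paper's argument: the paper also uses topological finite generation of $N$ to get finiteness of the set of continuous homomorphisms $N\to S_n$ and takes $M$ to be the intersection of their kernels, whereas you intersect all open subgroups of index at most $n$ — the same idea in a marginally different packaging.
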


\begin{proof}
(Adapted from \cite[19.2]{passmanICP}.) Let $[N:H] = n < \infty$. Now, as $N$ is topologically finitely generated, there are only finitely many continuous homomorphisms $N\to S_n$. where $S_n$ is the symmetric group. Take $M$ to be the intersection of the kernels of these homomorphisms.
\end{proof}

\begin{lem}\label{lem: sub N = mod M}
Let $N$ be a closed subgroup of $G$, and $A = \mathbf{N}_G(N)$. Suppose $I$ is an ideal of $kA$, and $I^\dagger \leq N$ with $[N:I^\dagger]<\infty$. Then there is a closed normal subgroup $M$ of $A$ such that $I$ is almost faithful mod $M$. Furthermore, this $M$ can be chosen so that $\nabla_G(N) = \nabla_A(M)$.
\end{lem}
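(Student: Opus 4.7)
The plan is to apply Lemma \ref{lem: open char subgroup} to $I^\dagger$ viewed as an open subgroup of $N$, and then to verify the two required properties directly.

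First I would observe that $I^\dagger$ is a closed normal subgroup of $A$, since it is the kernel of the continuous composite $A \to (kA)^\times \to (kA/I)^\times$ and $I$ is an ideal of $kA$. By hypothesis $I^\dagger \leq N$ and $[N : I^\dagger] < \infty$, so $I^\dagger$ is an open subgroup of $N$. Applying Lemma \ref{lem: open char subgroup} with $H = I^\dagger$ produces an open characteristic subgroup $M$ of $N$ contained in $I^\dagger$. Since $A = \mathbf{N}_G(N)$ acts by automorphisms on $N$ and $M$ is characteristic in $N$, we have $M \trianglelefteq A$. As $[N:M] < \infty$, we have $[I^\dagger : M] < \infty$, so $I$ is almost faithful mod $M$.

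For the equality $\nabla_G(N) = \nabla_A(M)$ I would argue as follows. Since $M \trianglelefteq A$, $\mathbf{N}_A(M) = A$, so $\nabla_A(M)$ is the preimage in $A$ of $\Delta(A/M)$, while by definition $\nabla_G(N)$ is the preimage in $A$ of $\Delta(A/N)$. Both therefore sit inside $A$ and may be compared via the natural surjection $\pi \colon A/M \twoheadrightarrow A/N$, which is $A$-equivariant with finite kernel $N/M$ of order $[N:M]$. Since $\pi$ sends the $A/M$-orbit of $aM$ onto the $A/N$-orbit of $aN$, if $aM \in \Delta(A/M)$ then $aN \in \Delta(A/N)$, giving $\nabla_A(M) \subseteq \nabla_G(N)$. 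Conversely, if $aN \in \Delta(A/N)$, so that its $A$-orbit has some finite size $r$, then the full preimage under $\pi$ of this orbit consists of $r \cdot [N:M]$ elements of $A/M$ and contains the $A/M$-orbit of $aM$, which is therefore also finite; so $a \in \nabla_A(M)$.

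The only step that needs any real care is the orbit-counting argument for $\nabla$, but it is forced by the finiteness of $N/M$; everything else is essentially immediate from Lemma \ref{lem: open char subgroup} together with the observation that a characteristic subgroup of $N$ is automatically normalised by the whole of $\mathbf{N}_G(N)$.
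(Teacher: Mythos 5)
Your proof is correct and follows essentially the same route as the paper: apply Lemma \ref{lem: open char subgroup} to $H = I^\dagger$ to obtain an open characteristic subgroup $M \leq I^\dagger$ of $N$ (hence normal in $A$), and then observe that $\Delta(A/M)$ is the full preimage of $\Delta(A/N)$ because the kernel $N/M$ of $A/M \twoheadrightarrow A/N$ is finite. The paper phrases this last step by noting $N/M \leq \Delta^+(A/M)$ rather than by your explicit orbit count, but the content is identical.
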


\begin{proof}
Set $H = I^\dagger$ in Lemma \ref{lem: open char subgroup}: then the subgroup $M$ is characteristic in $N$, hence normal in $A$; $M$ contains $I^\dagger$; and $M$ is open in $N$, so we must have $[I^\dagger : M] < \infty$.

By definition, we have $\nabla_G(N) = \nabla_A(N)$. Now, $N/M$ is a finite normal subgroup of $A/M$, so is contained in $\Delta^+(A/M)$. Hence the preimage under the natural quotient map $A/M\to A/N$ of $\Delta(A/N)$ is $\Delta(A/M)$. But this is the same as saying that $\nabla_A(N) = \nabla_A(M)$.
\end{proof}

When $G$ is a general compact $p$-adic analytic group, we will use the following lemma to translate between prime ideals of $kG$ and prime ideals of $kA$ for certain open subgroups $A$ of $G$.

\begin{lem}\label{lem: induced ideals from kA}
Let $H$ be an open normal subgroup of $G$. Suppose $P$ is a prime of $kG$, and write $Q$ for a minimal prime of $kH$ above $P\cap kH$. Let $B$ be the stabiliser in $G$ of $Q$, and let $A$ be any open subgroup of $G$ containing $B$, so that
$$H\leq B\leq A\leq G.$$
Then there is a prime ideal $T$ of $kA$ with $P = T^G$, and furthermore this $T$ satisfies $T\cap kH = \bigcap_{a\in A} Q^a$.
\end{lem}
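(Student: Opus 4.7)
The plan is to construct $T$ via the stabiliser $B$, applying Passman's theorem on primes in crossed products \cite[\S 16]{passmanICP}. Since $H$ is open normal in $G$, we have $[G:H]<\infty$, so $kG = kH * (G/H)$ is a (finite) crossed product, and similarly for any group in the tower $H \lhd B \leq A \leq G$.

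First, I would invoke Passman's theorem for the pair $H \lhd G$: given the prime $P$ of $kG$ and the minimal prime $Q$ of $kH$ lying below it, the contraction $T_0 := P \cap kB$ is a prime of $kB$ satisfying $T_0 \cap kH = Q$ and $T_0^G = P$. This is the ``descending'' direction of Passman's bijection between primes of $kG$ minimal over $(P \cap kH)kG$ and primes of $kB$ minimal over $QkB$; Lemma \ref{lem: G-prime stuff}(ii) supplies the analogous statement for $G$-primes, but here we need the stronger existence result for actual primes of $kB$.

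Set $T := T_0^A$. The identity $T^G = (T_0^A)^G = T_0^G = P$ is immediate from the transitivity of induction (Lemma \ref{lem: induction of ideals}). For primeness, I would apply Passman's theorem a second time (e.g. \cite[Corollary 14.8]{passmanICP}) to the pair $H \lhd A$: since $B \leq A$, the stabiliser of $Q$ in $A$ is still $B$, and Passman's criterion says that the induced ideal $T_0^A$ of any prime $T_0$ of $k\mathrm{Stab}_A(Q)$ contracting to $Q$ on $kH$ is itself prime in $kA$.

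For the contraction identity $T \cap kH = \bigcap_{a \in A} Q^a$, the inclusion $\supseteq$ is immediate: for each $a \in A$, $Q^a = T_0^a \cap kH \subseteq T_0^a \cdot kA$, so $\bigcap_a Q^a \subseteq \bigcap_a T_0^a kA = T_0^A = T$, and the intersection already lies in $kH$. For the reverse inclusion, $T \subseteq T_0 \cdot kA$ by Definition \ref{defn: induced ideals}; taking a right transversal $\{1 = c_1, \dots, c_m\}$ of $B$ in $A$ and writing $kA = \bigoplus_i kB \cdot c_i$ as a free left $kB$-module, a direct support calculation using that $Bc_i \cap H = \emptyset$ for $c_i \notin B$ (since $H \leq B$) shows $T_0 kA \cap kH = T_0 \cap kH = Q$. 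Since $T$ is an ideal of $kA$, $T \cap kH$ is $A$-stable, so it is contained in $\bigcap_{a \in A} Q^a$. The only non-routine step is the invocation of Passman's extension theorem in the first paragraph; the rest is bookkeeping with induction of ideals and a standard crossed-product contraction argument.
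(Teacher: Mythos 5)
Your overall architecture is the right one: produce a prime $T_0$ of $kB$ lying over the stabiliser, set $T := T_0^A$, get $T^G = P$ from transitivity of induction, primeness of $T_0^A$ from Passman's criterion over the stabiliser, and the contraction identity from a crossed-product support computation. That is essentially how Passman proves his Lemma 14.10(i), which is the single citation the paper uses to dispose of this lemma. Your support argument for $T_0 kA \cap kH = T_0\cap kH$ and the two inclusions for $T\cap kH = \bigcap_{a\in A}Q^a$ are also fine \emph{granted} a $T_0$ with the stated properties.

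The gap is in the first paragraph: the identification $T_0 := P\cap kB$ is wrong. Since $H\leq B$, you have $T_0\cap kH = P\cap kB\cap kH = P\cap kH = \bigcap_{g\in G}Q^g$, which equals $Q$ only when $Q$ is $G$-stable, i.e.\ only in the degenerate case $B=G$; and $P\cap kB$ need not be prime at all (already for $B=H$ it is merely $G$-prime, e.g.\ the intersection of a nontrivial orbit of primes). Nor is $(P\cap kB)^G = P$ automatic: for $B=H$ this reads $P=(P\cap kH)kG$, which fails whenever $P$ strictly contains the induced ideal it is minimal over. The ``descending'' direction of Passman's correspondence between primes of $kG$ over $\bigcap_g Q^g$ and primes of $kB$ over $Q$ is \emph{not} contraction; the existence of a prime $T_0\lhd kB$ with $T_0^G=P$ and $T_0\cap kH=Q$ is exactly the non-trivial content of \cite[Theorem 14.7 and Lemma 14.10(i)]{passmanICP}, and it has to be invoked (or reproved) as an existence statement rather than realised as $P\cap kB$. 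Since every subsequent step of your argument uses the two properties $T_0^G=P$ and $T_0\cap kH=Q$, this is a genuine gap, not a cosmetic one; once $T_0$ is taken from Passman's theorem rather than defined as a contraction, the rest of your proof goes through.
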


\begin{proof}
This follows from \cite[14.10(i)]{passmanICP}.
\end{proof}

\begin{defn}
A prime $P\lhd kG$ is \emph{standard} if it is controlled by $\Delta$ and we have $P\cap k\Delta = \bigcap_{x\in G} L^x$ for some almost faithful prime $L\lhd k\Delta$.
\end{defn}

\begin{lem}\label{lem: T standard iff Q standard}
Let $G$ be a nilpotent-by-finite compact $p$-adic analytic group and $H$ an open normal subgroup. Let $P$ be a prime ideal of $kG$, and $Q$ a minimal prime of $kH$ above $P$, so that $P\cap kH = \bigcap_{x\in G} Q^x$. If $Q$ is a standard prime, then $P$ is a standard prime.
\end{lem}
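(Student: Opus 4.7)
The plan is to verify the two defining properties of a standard prime for $P$: existence of an almost faithful prime $L' \lhd k\Delta(G)$ with $P \cap k\Delta(G) = \bigcap_{x \in G} (L')^x$, and control $P = (P \cap k\Delta(G))kG$. First observe that $\Delta(H) = \Delta(G) \cap H$ is characteristic in $H$ (hence $G$-stable) and has finite index in $\Delta(G)$. The hypothesis that $Q$ is standard provides an almost faithful prime $L \lhd k\Delta(H)$ with $Q \cap k\Delta(H) = \bigcap_{h \in H} L^h$ and $Q = (Q \cap k\Delta(H)) kH$. Intersecting $P \cap kH = \bigcap_{x \in G} Q^x$ with $k\Delta(H)$ immediately yields
$$P \cap k\Delta(H) = \bigcap_{x \in G} (Q \cap k\Delta(H))^x = \bigcap_{x \in G} L^x.$$

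To produce $L'$, take any minimal prime of $k\Delta(G)$ above the $G$-prime $P \cap k\Delta(G)$; by Lemma~\ref{lem: G-prime stuff}(ii) we then have $P \cap k\Delta(G) = \bigcap_{x \in G} (L')^x$. Applying Lemma~\ref{lem: G-prime stuff}(ii) again shows $L' \cap k\Delta(H)$ is the intersection of a $\Delta(G)$-orbit of some prime $M \lhd k\Delta(H)$. Comparing with the displayed equation above via the uniqueness of minimal primes over a $G$-prime forces $M$ to lie in the $G$-orbit of $L$, and hence to be almost faithful. Since $(L')^\dagger \cap \Delta(H) = (L' \cap k\Delta(H))^\dagger$ is a finite intersection of finite groups, and $[\Delta(G) : \Delta(H)] < \infty$, we conclude that $(L')^\dagger$ is finite.

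For the control part, I would first derive $P \cap kH = (P \cap k\Delta(H)) kH$ by pulling the finite intersection through multiplication by $kH$: each conjugate $Q^x = (Q \cap k\Delta(H))^x kH$ by standardness of $Q$ (and $G$-stability of $\Delta(H)$), and since $\Delta(H) \lhd H$ one may choose a set of coset representatives to commute the finite intersection with right multiplication by $kH$. This gives
$$(P \cap kH) kG \;=\; (P \cap k\Delta(H)) kG \;\subseteq\; (P \cap k\Delta(G)) kG \;\subseteq\; P.$$
As $P$ is a minimal prime over $(P \cap kH)kG$ by Lemma~\ref{lem: G-prime stuff}(i), it suffices to verify that $(P \cap k\Delta(G))kG$ is itself prime; minimality of $P$ over the smaller ideal then forces equality $P = (P \cap k\Delta(G))kG$.

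The main obstacle is establishing this primality in the absence of orbital soundness for $G$. My plan is to exploit $\Delta(G) \leq \fn(G)$: first extend the almost faithful prime $L'$ to a prime of $k\fn(G)$ via Proposition~\ref{propn: control, FNp}(i), then to a prime of $k\,\mathrm{Stab}_G(L')$ by imitating the prime extension argument of Lemma~\ref{lem: extension of G-prime from kDelta to kG is prime} (ultimately invoking \cite[Theorem A]{woods-extensions-of-primes}), and finally to $kG$ by \cite[Corollary 14.8]{passmanICP}. The delicate step is to show that $\mathrm{Stab}_G(L')$ inherits enough structure from $H$ (orbitally sound as a byproduct of $Q$ being standard) for the prime extension theorem to apply, and to verify that the resulting prime of $kG$ is forced to coincide with $P$ by the sandwich above.
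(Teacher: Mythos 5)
The first two thirds of your argument coincide with the paper's proof. Producing the almost faithful prime $L'$ of $k\Delta(G)$ by comparing the two expressions for $P\cap k\Delta(H)$ (the paper phrases this as $S\cap k\Delta_H\subseteq T^x$ for some $x$, after observing that the conjugation action of $G$ on $\Delta(H)$ factors through a finite group so that the intersections are finite), and then sandwiching
$$(P\cap kH)kG \subseteq (P\cap k\Delta(G))kG \subseteq P$$
so that minimality of $P$ over $(P\cap kH)kG$ (Lemma~\ref{lem: G-prime stuff}(i)) reduces everything to the primality of $(P\cap k\Delta(G))kG$ --- all of this is exactly what the paper does, and your justifications are sound.

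The problem is the step you yourself flag as ``delicate'': you never actually establish that $(P\cap k\Delta(G))kG$ is prime. You only outline a plan (extend $L'$ through $k[[\fn(G)]]$ via Proposition~\ref{propn: control, FNp}(i), then through $k[[\mathrm{Stab}_G(L')]]$ via the prime extension theorem, then induce up by Passman's Corollary~14.8), and you explicitly leave unverified the crucial ingredient, namely that the stabiliser satisfies the hypotheses needed for the prime extension theorem. The paper disposes of this step in one line by citing Lemma~\ref{lem: extension of G-prime from kDelta to kG is prime}, whose proof is precisely the $\fn(G)\to\mathrm{Stab}\to G$ route you sketch; so what you propose amounts to re-proving that lemma rather than invoking it, and you stop before the real work. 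Your instinct that orbital soundness is the obstruction is not unreasonable --- the $G$ of the present lemma is not assumed orbitally sound, whereas Lemma~\ref{lem: extension of G-prime from kDelta to kG is prime} is stated under that hypothesis, and the paper applies it here without comment --- but as written your proposal does not close the primality step, and so the proof is incomplete exactly where the substance lies.
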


\begin{proof}
(Adapted from \cite[20.4(i)]{passmanICP}.)
Write $\Delta = \Delta(G), \Delta_H = \Delta(H)$, and
$$P\cap k\Delta = \bigcap_{x\in G} S^x\qquad \mbox{and} \qquad Q\cap k\Delta_H = \bigcap_{y\in H} T^y,$$
for prime ideals $S\lhd k\Delta$ and $T\lhd k\Delta_H$. On the one hand,
\begin{align*}
P\cap k\Delta_H &=(P\cap k\Delta) \cap k\Delta_H\\
&= \left(\bigcap_{x\in G} S^x\right)\cap k\Delta_H\\
&=\bigcap_{x\in G}\left( S\cap k\Delta_H\right)^x,
\end{align*}
but on the other hand,
\begin{align*}
P\cap k\Delta_H &=(P\cap kH) \cap k\Delta_H\\
&= \left(\bigcap_{x\in G} Q^x\right) \cap k\Delta_H\\
&=\bigcap_{x\in G}\left(Q\cap k\Delta_H\right)^x\\
&=\bigcap_{x\in G} T^x.
\end{align*}
Now, the conjugation action of $G$ on $\Delta_H$ has kernel $\mathbf{C}_G(\Delta_H)$, which contains $\mathbf{C}_G(\Delta)$ by \cite[Lemma 1.3(ii)]{woods-struct-of-G}. But $\mathbf{C}_G(\Delta) = \bigcap \mathbf{C}_G(a)$, where the intersection runs over a set of topological generators $a$ for $\Delta$, and each $\mathbf{C}_G(a)$ is open in $G$ by definition of $\Delta$. Now, as $\Delta$ is topologically finally generated, we see that $\mathbf{C}_G(\Delta)$ and hence $\mathbf{C}_G(\Delta_H)$ are also open in $G$.

That is, the conjugation action of $G$ on $\Delta_H$ factors through the finite group $G/\mathbf{C}_G(\Delta_H)$, and hence the intersections above are finite, so that (by the primality of $T$), we have
$$ S\cap k\Delta_H \subseteq T^x$$
for some $x\in G$.

Now, by assumption, $Q$ is standard, so $T$ is almost faithful. This means that
$$S^\dagger\cap \Delta_H \subseteq (T^\dagger)^x$$
is a finite group, and so, since $[\Delta : \Delta_H] < \infty$, we have that $S^\dagger$ is also finite, so $S$ is almost faithful.

It remains to show that $S^\circ kG = P$. By Lemma \ref{lem: extension of G-prime from kDelta to kG is prime}, we see that $S^\circ kG = P'$ is a prime ideal of $kG$ contained in $P$. Now,
\begin{align*}
(P\cap kH)kG &= \left( \bigcap_{g\in G} Q^g\right) kG\\
& = \left( \bigcap_{g\in G} \left(\bigcap_{h\in H} T^h kH\right)^g \right) kG\\
&= \left( \bigcap_{g\in G} T^g\right) kG\\
&= (P\cap k\Delta_H) kG&\text{by calculation above}\\
&\subseteq (P\cap k\Delta)kG = P' \subseteq P,
\end{align*}
and as $H$ is open and normal in $G$, we know from Lemma \ref{lem: G-prime stuff}(i) that $P$ is a minimal prime above $(P\cap kH)kG$, so that $P = P'$.
\end{proof}

Finally, the main theorem of this subsection:

\begin{thm}\label{thm: P is induced from nabla}
Let $G$ be a nilpotent-by-finite compact $p$-adic analytic group, $P$ a prime ideal of $kG$, $H$ an orbitally sound open normal subgroup of $G$, $Q$ a minimal prime ideal above $P\cap kH$, and $N = \mathrm{i}_G(Q^\dagger)$. Then there exists an ideal $L \lhd k[[\nabla_G(N)]]$ with $P = L^G$.
\end{thm}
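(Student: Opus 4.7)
My plan is to show that $P$ is controlled by $\nabla := \nabla_G(N)$, taking $L := P\cap k\nabla$; then $P = L^G$ will follow at once. Indeed, $\nabla$ is $G$-normal (being the preimage in $G$ of the characteristic subgroup $\Delta(G/N) \leq G/N$) and of finite index in $G$ (since $\Delta(G/N)$ is open in the nilpotent-by-finite group $G/N$). Hence any $G$-invariant ideal $L \lhd k\nabla$ satisfies $L^G = LkG$, and the conclusion $P = L^G$ is equivalent to the control assertion $P = (P\cap k\nabla)kG$.

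The first step is an application of Theorem~\ref{thm: control, o.s. case} inside $kH$. Passing to the quotient $\overline{H} := H/Q^\dagger$ (after possibly first replacing $Q^\dagger$ by a slightly larger finite-index normal subgroup so that orbital soundness is inherited by $\overline{H}$), the image $\overline{Q}$ is a faithful prime of $k\overline{H}$, and the theorem yields $\overline{Q} = (\overline{Q}\cap k\Delta(\overline{H}))k\overline{H}$. Lifting, one gets $Q = (Q\cap kK)kH$ where $K$ is the preimage of $\Delta(\overline{H})$ in $H$.

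Next, I would verify $K \leq \nabla \cap H$. Given $h \in K$, the class $hQ^\dagger$ has finite $H$-conjugacy class in $\overline{H}$; since $Q^\dagger \leq N$, projecting via $\overline{H} \twoheadrightarrow G/N$ shows $hN$ has finite $H$-conjugacy class in $G/N$, and then finite $G$-conjugacy class by $[G:H] < \infty$, so $h \in \nabla$. Since $\nabla \cap H$ is $G$-normal, each conjugate $Q^g$ is in turn controlled by $\nabla \cap H$; the finite crossed-product decomposition $kH = k(\nabla \cap H) * (H/(\nabla\cap H))$ then makes each $Q^g$ a graded ideal in this crossed product, and intersecting over $g \in G$ yields
\[ P\cap kH \;=\; \bigcap_{g\in G} Q^g \;=\; (P\cap k(\nabla\cap H))\,kH. \]

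The final and hardest step is to extend this control across the finite crossed product $kG = k\nabla * (G/\nabla)$. One has $(P\cap kH)kG = (P\cap k(\nabla\cap H))kG \subseteq (P\cap k\nabla)kG \subseteq P$, with $P$ a minimal prime above the first ideal by Lemma~\ref{lem: G-prime stuff}(i). Invoking Passman's correspondence for primes in finite crossed products (\cite[Chapters 14 and 16]{passmanICP}) applied to the quotient $kG/(P\cap k(\nabla\cap H))kG$ forces the intermediate ideal $(P\cap k\nabla)kG$ to coincide with $P$, yielding the desired control. This crossed-product step is the principal obstacle; the earlier reductions are fairly direct consequences of the orbitally sound control theorem together with the definitions of $N$ and $\nabla$.
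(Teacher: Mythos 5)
There is a genuine gap, and it occurs at the very first step: you treat $N = \mathrm{i}_G(Q^\dagger)$ as a normal subgroup of $G$, writing $G/N$ and identifying $\nabla_G(N)$ with the preimage of $\Delta(G/N)$, from which you conclude that $\nabla$ is normal and open in $G$ and hence that induction coincides with control. But $Q$ is only one of the finitely many minimal primes above $P\cap kH$, which $G$ permutes; $Q^\dagger$ is normalised by $\mathrm{Stab}_G(Q)$ but not by $G$, and $N$ is merely a $G$-orbital subgroup. That is why the paper defines $\nabla_G(N)$ via $\Delta(\mathbf{N}_G(N)/N)$ with $\mathbf{N}_G(N)$ possibly a \emph{proper} subgroup of $G$ --- indeed the entire point of this theorem, as used in Corollary \ref{cor: faithful primes are induced from a proper open subgroup} and the induction on $[G:\nio(G)]$ in Corollary \ref{cor: main result}, is that for non--orbitally-sound $G$ one lands in a proper subgroup. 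Your reduction of ``$P = L^G$'' to ``$P$ is controlled by $\nabla$'' therefore does not get off the ground; moreover the control statement itself is false in general (control of arbitrary primes by $\Delta$-like subgroups is exactly what fails when $G$ is not orbitally sound, which is why Theorem \ref{thm: control, o.s. case} carries that hypothesis).

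The final step is also unjustified even granting normality of $\nabla$: the crossed-product prime correspondence of \cite[Chapters 14, 16]{passmanICP} relates primes of $kG$ lying over a given $G$-prime of $k\nabla$, but it does not force an intermediate right ideal $(P\cap k\nabla)kG$ sandwiched between $(P\cap kH)kG$ and $P$ to equal $P$; that assertion \emph{is} a control theorem and cannot be extracted formally. The paper's actual route is different in kind: it first uses \cite[14.10(i)]{passmanICP} (Lemma \ref{lem: induced ideals from kA}) to produce a prime $T$ of $kA$, $A = \mathbf{N}_G(N)$, with $P = T^G$ and $T\cap kH = \bigcap_{a\in A}Q^a$; then it quotients by an open characteristic subgroup $M$ of $N$ inside $Q^\dagger$ (Lemmas \ref{lem: open char subgroup}, \ref{lem: sub N = mod M}), applies the control theorem to the almost faithful prime $\overline{Q}$ of $k\overline{H}$ to see that $\overline{Q}$ is \emph{standard}, and transfers standardness from $\overline{Q}$ to $\overline{T}$ via Lemma \ref{lem: T standard iff Q standard}; finally it lifts the resulting source $\overline{L}\lhd k[[\Delta(\overline{A})]]$ back to $k\nabla$ and concludes by transitivity of induction (Lemma \ref{lem: induction of ideals}). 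The intermediate prime $T$ over the normaliser and the standard-prime transfer are the essential ingredients missing from your argument.
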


\begin{rk}
The subgroup $N$ is a \emph{vertex} of the prime ideal $P$, and the ideal $L$ is a \emph{source} of $P$ corresponding to the vertex $N$.
\end{rk}

\begin{proof}
We follow the proof of \cite[2.3]{lp}, as reproduced in \cite[20.5]{passmanICP}.

Trivially, $H$ stabilises $Q$, i.e. $H \leq B := \mathrm{Stab}_G(Q)$; and $B$ normalises $Q^\dagger$. Set $N := \mathrm{i}_G(Q^\dagger)$. Now we must have $\mathbf{N}_G(Q^\dagger) \leq A := \mathbf{N}_G(N)$: indeed, if $x\in G$ normalises $Q^\dagger$, then it permutes the (finitely many) closed orbital subgroups $K$ of $G$ containing $Q^\dagger$ as an open subgroup, and hence it normalises $N$, which is generated by those $K$ \cite[Definition 1.6]{woods-struct-of-G}.

We are in the following situation:

\centerline{
\xymatrix{
&&&G\ar@{-}[d]\\
&&&A\ar@{=}[r]\ar@{-}[d]\ar@{-}[ddll]&\mathbf{N}_G(N)\\
&&&B\ar@{=}[r]\ar@{-}[d]&\mathrm{Stab}_G(Q)\\
&\nabla_G(N)\ar@{-}[d]&&H\ar@{-}[dd]\ar@{..}[r]&\mbox{orbitally sound}\\
\mathrm{i}_G(Q^\dagger)\ar@{=}[r]&N\ar@{-}[drr]\\
&&&Q^\dagger
}
}

Now, Lemma \ref{lem: induced ideals from kA} shows that there is a prime ideal $T$ of $kA$ with $P = T^G$ and $T\cap kH  = \bigcap_{a\in A} Q^a$. It will suffice to show the existence of a prime ideal $L$ of $k[[\nabla_G(N)]]$ with $T =L^A$, by Lemma \ref{lem: induction of ideals}.

Let $M$ be an open characteristic subgroup of $N$ contained in $Q^\dagger$, whose existence is guaranteed by Lemma \ref{lem: open char subgroup}. Write $\nabla = \nabla_G(N)$, which we know is equal to $\nabla_A(M)$ by Lemma \ref{lem: sub N = mod M}, and denote by $\overline{(\cdot)}$ images under the natural map $kA \to k[[A/M]]$.

Now $Q$ is a prime ideal of $kH$ with $M\leq Q^\dagger$ an open subgroup, so $\overline{Q}$ is an almost faithful prime ideal of $k\overline{H}$; hence, as $\overline{H}$ is orbitally sound \cite[Lemma 1.5(ii)]{woods-struct-of-G}, we see that $\overline{Q}$ is a \emph{standard} prime of $k\overline{H}$.

But $T\cap kH  = \bigcap_{a\in A} Q^a$ clearly implies $\overline{T}\cap k\overline{H}  = \bigcap_{\overline{a}\in \overline{A}} \overline{Q}^{\overline{a}}$, by the modular law. Now Lemma \ref{lem: T standard iff Q standard} implies that $\overline{T}$ is also a standard prime ideal of $k\overline{A}$: that is, there is an almost faithful prime ideal $\overline{L}$ of $k[[\Delta(\overline{A})]]$ with $\overline{T} = \overline{L}^{\overline{A}}$. Lifting this back to $kA$, we see that we have an almost faithful mod $M$ prime ideal $L$ of $k\nabla$ with $T = L^A$ as required.
\end{proof}


We end this subsection with an important application of this theorem. Recall the definition of $\nio(G)$ from \cite[Definition 2.5]{woods-struct-of-G}.

\begin{cor}\label{cor: faithful primes are induced from a proper open subgroup}
Suppose $G$ is a nilpotent-by-finite compact $p$-adic analytic group which is not orbitally sound. Let $P$ be a faithful prime ideal of $kG$. Then $P$ is induced from some proper open subgroup of $G$ containing $\nio(G)$.
\end{cor}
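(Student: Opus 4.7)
The plan is to apply Theorem~\ref{thm: P is induced from nabla} with $H := \nio(G)$, which is an orbitally sound open normal subgroup of $G$ and, by hypothesis, is a proper subgroup of $G$. Let $Q$ be a minimal prime of $kH$ above $P \cap kH$, and set $N := \mathrm{i}_G(Q^\dagger)$ and $\nabla := \nabla_G(N)$. The theorem then produces an ideal $L \lhd k\nabla$ with $P = L^G$.

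Next I would combine $\nabla$ and $\nio(G)$ into a single open subgroup of $G$. Since $Q^\dagger$ is a normal subgroup of $\nio(G)$, each $h \in \nio(G)$ fixes $Q^\dagger$ under conjugation, and the uniqueness of the $G$-isolator then gives $N^h = \mathrm{i}_G((Q^\dagger)^h) = N$. Hence $\nio(G) \leq \mathbf{N}_G(N)$, so $K := \nabla \cdot \nio(G)$ is a subgroup of $G$ (since $\nio(G)$ is normal in $G$), open (since it contains $\nio(G)$), and containing $\nio(G)$. By the transitivity of induction (Lemma~\ref{lem: induction of ideals}), $P = L^G = (L^K)^G$, so $P$ is induced from the ideal $L^K \lhd kK$.

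The main obstacle is to verify that $K \lneq G$. Faithfulness of $P$ gives $\bigcap_{g\in G}(Q^\dagger)^g = (P \cap kH)^\dagger = 1$, so $Q^\dagger$ has trivial $G$-core. I would argue by contradiction: if $K = G$, then $\mathbf{N}_G(N) = G$ (so $N \lhd G$) and one obtains the product decomposition
$$G/N \;=\; \Delta(G/N) \cdot (\nio(G)N/N),$$
which combines the FC-structure of $\nabla/N = \Delta(G/N)$ with the orbital soundness of $\nio(G)$. Exploiting the triviality of the $G$-core of $Q^\dagger$, I would aim to show this forces $G$ itself to be orbitally sound, contradicting the hypothesis.

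The cleanest sub-case is $N = 1$ (which covers $Q$ being $G$-invariant, for then $Q^\dagger$ is a $G$-normal subgroup equal to its own $G$-core, hence trivial); here $\nabla = \Delta(G)$, and one verifies that $\Delta(G) \cdot \nio(G)$ is an orbitally sound open normal subgroup of $G$, so by maximality of $\nio(G)$ one has $\Delta(G) \leq \nio(G)$, giving $K = \nio(G) \lneq G$ and the desired contradiction. The general case should reduce to this by working modulo the $G$-core $N^\circ := \bigcap_{g\in G} N^g$ (a proper $G$-normal isolated subgroup of $G$) and applying the same analysis in $G/N^\circ$; the technical subtlety I expect to be the main difficulty is confirming that this quotient again fails to be orbitally sound while still supporting the required prime-ideal data, which I anticipate will require Lemma~\ref{lem: quotients of FNp} together with the structural properties of $\nio$ used in the proof of Theorem~\ref{thm: catenarity in o.s. case}.
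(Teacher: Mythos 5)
Your first two steps match the paper's: apply Theorem \ref{thm: P is induced from nabla} with $H=\nio(G)$, observe that $\nio(G)$ normalises $N=\mathrm{i}_G(Q^\dagger)$ (your direct argument via $Q^\dagger\lhd H$ is fine; the paper instead cites the orbital soundness of $\nio(G)$), and use transitivity of induction (Lemma \ref{lem: induction of ideals}) to induce through an open subgroup containing both $\nabla_G(N)$ and $\nio(G)$ --- the paper uses $\mathbf{N}_G(N)$ where you use $K=\nabla_G(N)\cdot\nio(G)\leq\mathbf{N}_G(N)$, and either choice works. The genuine gap is in the case $K=G$ (equivalently $N\lhd G$). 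Your plan there is to derive that $G$ is orbitally sound from the decomposition $G/N=\Delta(G/N)\cdot(\nio(G)N/N)$, reducing to the sub-case $N=1$ by passing to $G/N^\circ$. This cannot work as stated: when $N$ is infinite, $G/N$ may perfectly well be orbitally sound even though $G$ is not (orbital soundness does not lift along quotients by infinite isolated normal subgroups), so the ``same analysis in $G/N^\circ$'' yields no contradiction, and your anticipated claim that the quotient ``again fails to be orbitally sound'' is exactly what fails.

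The missing ingredient, which is the heart of the paper's treatment of this case, is that faithfulness of $P$ forces $N$ to be \emph{finite} once it is normal in $G$. Indeed $Q^\dagger$ is open in its isolator $N$ \cite[Proposition 1.7]{woods-struct-of-G}, $Q^\dagger$ is orbital and so has only finitely many $G$-conjugates, and $\bigcap_{g\in G}(Q^\dagger)^g=(P\cap kH)^\dagger=P^\dagger\cap H=1$; a finite intersection of finite-index subgroups of $N$ being trivial forces $N$ to be finite. Hence $N=\Delta^+$, $\nabla_G(N)=\Delta$, and $K=\Delta\cdot\nio(G)=\nio(G)$, which is proper precisely because $G$ is not orbitally sound. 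Note that this is not a contradiction to be refuted but the actual conclusion in this case: $P$ is induced from $\nio(G)$ itself. With this insertion your argument closes; without it, the key assertion $K\lneq G$ is unproved.
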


\begin{proof}
Write $H = \nio(G)$. $H$ is orbitally sound by \cite[Theorem 2.6(ii)]{woods-struct-of-G}.

Let $Q$ be a minimal prime ideal above $P\cap kH$, so that $N = \mathrm{i}_G(Q^\dagger)$ is a vertex for $P$ by Theorem \ref{thm: P is induced from nabla}. Then $P$ is induced from $\nabla_G(N)$, which is contained in $\mathbf{N}_G(N)$, and so $P$ is induced from $\mathbf{N}_G(N)$ itself by Lemma \ref{lem: induction of ideals}. But, as $\nio(G)$ is orbitally sound, in particular it must normalise $N$ \cite[Theorem 2.6(i)]{woods-struct-of-G}. Hence, if $\mathbf{N}_G(N)$ is a proper subgroup of $G$, we are done.

Suppose instead that $\mathbf{N}_G(N) = G$, i.e. that $\mathrm{i}_G(Q^\dagger)$ is a normal subgroup of $G$. Then, for each $g\in G$, $(Q^\dagger)^g$ is a finite-index subgroup of $\mathrm{i}_G(Q^\dagger)$ \cite[Proposition 1.7]{woods-struct-of-G}; and $Q^\dagger$ is orbital in $G$, so there are only finitely many $(Q^\dagger)^g$, and their intersection $(Q^\dagger)^\circ$ must also have finite index in $\mathrm{i}_G(Q^\dagger)$. But $(Q^\dagger)^\circ = P^\dagger = 1$, so in particular we have $\mathrm{i}_G(Q^\dagger) = \Delta^+$, and hence $P$ is induced from $\nabla_G(N) = \Delta$, again by Theorem \ref{thm: P is induced from nabla}. Hence, as $\nio(G)$ contains $\Delta$, $P$ must be induced from $\nio(G)$ itself.
\end{proof}

\subsection{The general case: inducing from open subgroups}

Now we will proceed to show that $kG$ is catenary.

\begin{lem}\label{lem: ideals maximal subject to extension contained in P}
Let $H$ be an open subgroup of $G$, and $P$ a prime ideal of $kG$. Suppose $Q$ is an ideal of $kH$ maximal amongst those ideals $A$ of $kH$ with $A^G\subseteq P$. Then $Q$ is prime, and $P$ is a minimal prime ideal above $Q^G$.
\end{lem}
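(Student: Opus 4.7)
The plan is to prove the two claims in sequence, each essentially by the standard Lorenz–Passman argument for finite crossed products, which applies here because $H$ is open in the compact group $G$ and hence $[G:H] < \infty$.

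\textbf{Primality of $Q$.} The key ingredient is the inequality $A^G B^G \subseteq (AB)^G$ for ideals $A, B \lhd kH$: indeed, $A^G \subseteq A\, kG$ and $B^G$ is two-sided, so $A^G B^G \subseteq A\,kG\, B^G \subseteq A\cdot B^G \subseteq AB\,kG$; as $A^G B^G$ is itself a two-sided ideal of $kG$ contained in $AB\,kG$, it is $\subseteq (AB)^G$ by definition. Given this, suppose $I, J \lhd kH$ with $IJ \subseteq Q$, and set $I' = I + Q$, $J' = J + Q$. Then $I'J' \subseteq IJ + IQ + QJ + Q^2 \subseteq Q$ since $Q$ is an ideal of $kH$, so $(I')^G (J')^G \subseteq (I'J')^G \subseteq Q^G \subseteq P$. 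By primality of $P$, one of $(I')^G$, $(J')^G$ lies in $P$, and then the maximality of $Q$ in the set $\mathcal{S} := \{A \lhd kH : A^G \subseteq P\}$ gives $I' \subseteq Q$ or $J' \subseteq Q$, whence $I \subseteq Q$ or $J \subseteq Q$.

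\textbf{Minimality of $P$ over $Q^G$.} This is the subtler part. The plan is to follow the argument of \cite[Lemma 2.1(ii)]{lp} (equivalently \cite[Lemma 16.2]{passmanICP}), which treats this exactly for subgroups of finite index in a crossed product. Suppose for contradiction that $P_0$ is a prime ideal of $kG$ with $Q^G \subseteq P_0 \subsetneq P$. One first observes (by Zorn's lemma applied to $\mathcal{S}_0 := \{A \lhd kH : A^G \subseteq P_0\}$ together with the primality argument above) that there exists a prime $Q_0 \supseteq Q$ of $kH$ with $Q_0^G \subseteq P_0$; since $Q_0^G \subseteq P_0 \subseteq P$, maximality of $Q$ in $\mathcal{S}$ forces $Q_0 = Q$. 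The remaining work is to show that $P_0 \subsetneq P$ is then incompatible with both primes being minimal over $Q^G$. Following Passman, this is done by passing to the normal core $N = \bigcap_{g\in G} H^g$, which is open and normal in $G$, so that $kG$ becomes a finite (free, hence normalizing) extension of $kN$, and applying the incomparability theorem: two comparable primes of $kG$ cannot have the same contraction to $kN$, while the finite-index machinery forces $P_0 \cap kN = P \cap kN$ once both contain $Q^G$.

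\textbf{Main obstacle.} The primality step is routine once the inclusion $A^GB^G\subseteq (AB)^G$ is in place. The minimality step is where real work is needed, and the obstacle is showing that the contraction to $kN$ coincides for $P_0$ and $P$; however, since $[G:H] < \infty$ the situation is formally identical to that of a finite crossed product, so the classical Lorenz–Passman argument transfers verbatim, and I would simply cite it.
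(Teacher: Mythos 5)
Your argument for the primality of $Q$ is correct and coincides with the paper's (the paper simply cites Passman for the inclusion $A^G B^G \subseteq (AB)^G$ that you prove by hand; the reduction via $I' = I+Q$, $J' = J+Q$ is the same).

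For the minimality of $P$ over $Q^G$ you take a genuinely different route, and your sketch has a gap at exactly the point you describe as "the finite-index machinery forces $P_0 \cap kN = P \cap kN$ once both contain $Q^G$". Containing $Q^G$ only gives the inclusion $Q^G \cap kN = \bigcap_{g\in G}(Q\cap kN)^g \subseteq P_0 \cap kN \subseteq P\cap kN$; what your argument actually needs is the reverse inclusion $P\cap kN \subseteq Q^G$ (equivalently $P\cap kN\subseteq Q$, since $Q^g kG\cap kN=(Q\cap kN)^g$), and this is the entire content of the Lorenz--Passman lemma you propose to cite. It does not follow in one line from the maximality of $Q$, because the family $\{A \lhd kH : A^G\subseteq P\}$ is not closed under sums: for $R=k\times k$, $G=C_2$ acting by swapping the factors, $H=1$ and $P=0\lhd R*G\cong M_2(k)$, both $e_1R$ and $e_2R$ induce to $0$ but their sum induces to all of $R*G$. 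So as written the step would fail; it must either be proved or genuinely outsourced to the finite crossed product literature (which is legitimate here, since $kG=kN*(G/N)$ with $G/N$ finite). The paper avoids this issue entirely by arguing directly rather than by contradiction: since $Q^G\subseteq P$, it suffices to exhibit \emph{some} ideal $A\lhd kH$ with $A^G\subseteq Q^G$ and $P$ minimal over $A^G$, and the choice $A=(P\cap kN)^H$ works because transitivity of induction (Lemma \ref{lem: induction of ideals}) and the $G$-stability of $P\cap kN$ give $A^G=(P\cap kN)kG$, over which $P$ is minimal by the cutting-down statement of Lemma \ref{lem: G-prime stuff}(i). That direct route is shorter and is the one you should compare your reduction against.
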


\begin{proof}
Suppose $I$ and $J$ are ideals strictly containing $Q$: then, by the maximality of $Q$, we see that $I^G$ and $J^G$ must strictly contain $P$. Hence $I^G J^G \subseteq (IJ)^G$ \cite[Lemma 14.5]{passmanICP} strictly contains $P$, and so $IJ$ strictly contains $Q$. Hence $Q$ is prime.

$P$ is clearly a prime ideal containing $Q^G$, so to show it is minimal it suffices to find any ideal $A$ of $kH$ with $P$ a minimal prime above $A^G$. Let $N$ be the normal core of $H$ in $G$, and take $A = (P\cap kN)^H$: then by Lemma \ref{lem: induction of ideals} we have $A^G = (P\cap kN)^G = (P\cap kN)kG$, and by Lemma \ref{lem: G-prime stuff}(i), $P$ is a minimal prime above this.
\end{proof}

\begin{lem}\label{lem: catenary subgroup}
Let $H$ be an open subgroup of $G$ with $kH$ catenary. If $P\lneq P'$ are adjacent primes of $kG$, and $P$ is induced from $kH$, then $h(P') = h(P) + 1$.
\end{lem}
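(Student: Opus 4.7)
The plan is to translate the problem into $kH$ via the induction correspondence, invoke catenarity of $kH$, and transfer the resulting height statement back to $kG$. The two workhorses are Lemma \ref{lem: ideals maximal subject to extension contained in P}, which assigns to each prime of $kG$ a prime of $kH$ inducing into it, and Proposition \ref{propn: P minimal over Q implies heights are equal}, which says that whenever a prime $P$ of $kG$ is a minimal prime above $Q^G$ for some prime $Q$ of $kH$, then $h(P)=h(Q)$.

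First I would write $P = L^G$ for some ideal $L \lhd kH$ (using that $P$ is induced), and extend $L$ by Zorn's lemma to an ideal $Q$ of $kH$ maximal amongst those $A \lhd kH$ with $A^G \subseteq P$. Lemma \ref{lem: ideals maximal subject to extension contained in P} gives that $Q$ is prime and $P$ is a minimal prime over $Q^G$; moreover $L \subseteq Q$ upgrades the containment $Q^G \subseteq P$ to an equality $P = Q^G$. Extending $Q$ further by Zorn's lemma to an ideal $Q'$ of $kH$ maximal amongst those $A \lhd kH$ with $A^G \subseteq P'$, I obtain a prime $Q'$ with $Q \subseteq Q'$ such that $P'$ is a minimal prime over $(Q')^G$; the inclusion $Q \subseteq Q'$ is strict, for otherwise $P$ and $P'$ would both be minimal primes above the same ideal $Q^G$, forcing $P = P'$. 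Proposition \ref{propn: P minimal over Q implies heights are equal} then yields $h(P) = h(Q)$ and $h(P') = h(Q')$, so the problem reduces to showing that $Q$ and $Q'$ are adjacent in $kH$: catenarity of $kH$ will then supply $h(Q') = h(Q) + 1$, hence $h(P') = h(P) + 1$.

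For this adjacency I would argue by contradiction. Suppose a prime $R$ of $kH$ satisfies $Q \lneq R \lneq Q'$. Then $R^G \subseteq (Q')^G \subseteq P'$, so I may pick a minimal prime $\tilde P$ of $kG$ above $R^G$ with $\tilde P \subseteq P'$; Proposition \ref{propn: P minimal over Q implies heights are equal} applied once more gives $h(\tilde P) = h(R)$. Strict monotonicity of height along the chain $Q \lneq R \lneq Q'$ forces $h(P) < h(\tilde P) < h(P')$, and combined with the containments $P = Q^G \subseteq R^G \subseteq \tilde P \subseteq P'$, this produces a prime strictly between $P$ and $P'$, contradicting their adjacency in $kG$. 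The only real subtlety is ensuring that the Zorn construction of $Q'$ can be arranged with $Q \subseteq Q'$ from the outset (done by starting the extension from $Q$ itself rather than from scratch) and checking that the hypotheses of Proposition \ref{propn: P minimal over Q implies heights are equal} genuinely apply to $\tilde P$ — which they do, since $\tilde P$ is by construction a minimal prime of $kG$ over $R^G$ with $R$ a prime of $kH$.
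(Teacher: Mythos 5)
Your proof is correct and follows essentially the same route as the paper's: pass to maximal ideals $Q\subseteq Q'$ of $kH$ whose induced ideals lie in $P$, $P'$, transfer heights via Proposition \ref{propn: P minimal over Q implies heights are equal}, and rule out an intermediate prime by contradiction. You are in fact slightly more careful than the paper in two spots it leaves implicit — arranging $Q\subseteq Q'$ by starting the second Zorn extension from $Q$, and using the induced hypothesis to get $P=Q^G$, which is exactly what justifies the containment $P\subseteq\tilde P$ in the contradiction step.
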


\begin{proof}
(Adapted from \cite[3.3]{ll}.)
Choose an ideal $Q$ (resp. $Q'$) of $kH$ which is maximal amongst those ideals $A$ of $kH$ with $A^G\subseteq P$ (resp. $A^G \subseteq P'$). Then $Q$ and $Q'$ are prime, and $P$ (resp. $P'$) is a minimal prime ideal over $Q^G$ (resp. $Q'^G$), by Lemma \ref{lem: ideals maximal subject to extension contained in P}. Hence, by Proposition \ref{propn: P minimal over Q implies heights are equal}, we see that it suffices to show that $h(Q') = h(Q) + 1$.

Suppose not. Then there exists some prime ideal $I$ of $kH$ with $Q\lneq I\lneq Q'$; and we may choose a prime ideal $J$ of $kG$ which is minimal over $I^G$. Then $P\leq J\leq P'$. But $h(Q) < h(I) < h(Q')$ implies (by another application of Proposition \ref{propn: P minimal over Q implies heights are equal}) that $h(P) < h(J) < h(P')$, contradicting our assumption that $P$ and $P'$ were adjacent primes.
%
\end{proof}

\begin{cor}\label{cor: main result}
Let $G$ be a nilpotent-by-finite compact $p$-adic analytic group, and $k$ a finite field of characteristic $p > 2$. Then $kG$ is a catenary ring.
\end{cor}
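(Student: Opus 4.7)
The plan is to proceed by induction on the (finite) index $n := [G : \nio(G)]$. The base case $n = 1$ coincides with the situation where $G$ itself is orbitally sound, and is precisely Theorem \ref{thm: catenarity in o.s. case}.

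For the inductive step, assume $n > 1$ and that the result holds for every nilpotent-by-finite compact $p$-adic analytic group of index strictly less than $n$. To show $kG$ is catenary, it suffices to verify that $h(P') = h(P) + 1$ for every pair of adjacent primes $P \lneq P'$ of $kG$. The strategy is to produce an open subgroup $H$ with $\nio(G) \leq H < G$ such that $P$ is induced from an ideal of $kH$, and then invoke Lemma \ref{lem: catenary subgroup}. The crucial point is the index estimate: since $\nio(G)$ is orbitally sound, open, and normal in $G$, it remains so in $H$, so by the maximality in the definition of $\nio$ we have $\nio(G) \leq \nio(H)$, and therefore
\[
[H : \nio(H)] \leq [H : \nio(G)] < [G : \nio(G)] = n,
\]
so that $kH$ is catenary by the inductive hypothesis.

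It remains to produce such an $H$ for each adjacent pair. When $P$ is faithful, Corollary \ref{cor: faithful primes are induced from a proper open subgroup} supplies $H$ directly. For a general $P$, the plan is to pass to the quotient $\overline{G} := G/P^\dagger$, in which $\overline{P}$ becomes a faithful prime ideal still adjacent to $\overline{P'}$; apply the faithful-case argument (or the base case, if $\overline{G}$ happens already to be orbitally sound) to $\overline{P}$ in $k\overline{G}$; and lift the resulting decomposition of $\overline{P}$ as an induced ideal back along the quotient map $kG \twoheadrightarrow k\overline{G}$, writing $P$ itself as induced from the preimage of the witnessing subgroup.

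The main obstacle I expect is precisely this lifting step: verifying carefully that the preimage in $G$ of a proper open subgroup of $\overline{G}$ still contains $\nio(G)$ (so that the index estimate above applies), and that ``induced from'' genuinely transfers along the quotient map at the level of the relevant ideals. Granted these verifications, the proof is a routine assembly of Corollary \ref{cor: faithful primes are induced from a proper open subgroup}, Lemma \ref{lem: catenary subgroup}, and the inductive hypothesis.
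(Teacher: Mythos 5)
Your proposal follows the paper's own proof essentially verbatim: induction on $[G:\nio(G)]$ with base case Theorem \ref{thm: catenarity in o.s. case}, reduction of the inductive step to Corollary \ref{cor: faithful primes are induced from a proper open subgroup}, the index estimate $[H:\nio(H)]<[G:\nio(G)]$ via the maximality of $\nio(H)$, and Lemma \ref{lem: catenary subgroup}. The only point of divergence is the non-faithful case, which the paper dispatches with a bare ``assume without loss of generality that $P$ is faithful'' while you sketch the justification; your lifting step does go through (the preimage along $kG\twoheadrightarrow k[[G/P^\dagger]]$ of an ideal induced from $\overline{H}$ is induced from the preimage of $\overline{H}$, since the kernel is contained in every $L^g kG$, and $\nio(G)P^\dagger/P^\dagger$ is an orbitally sound open normal subgroup of $G/P^\dagger$ and hence lies in $\nio(G/P^\dagger)$), so you are, if anything, more careful here than the paper.
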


\begin{proof}
(Adapted from \cite[3.3]{ll}.)
Take two adjacent prime ideals $P \lneq Q$ of $kG$, and assume without loss of generality that $P$ is faithful. We proceed by induction on the index $[G: \mathrm{nio}(G)]$. When this index equals $1$, we are already done by Theorem \ref{thm: catenarity in o.s. case}, so suppose not. Then Corollary \ref{cor: faithful primes are induced from a proper open subgroup} implies that $P$ is induced from some proper open subgroup $H$ of $G$ containing $\nio(G)$. As $\nio(G)$ is an orbitally sound open normal subgroup of $H$, it must be contained in $\nio(H)$ (by the maximality of $\nio(H)$), and so we have $[H:\mathrm{nio}(H)] < [G:\mathrm{nio}(G)]$. By induction, $kH$ is catenary, so we may now invoke Lemma \ref{lem: catenary subgroup} to show that $h(Q) = h(P) + 1$.
\end{proof}


\bibliography{biblio}
\bibliographystyle{plain}

\end{document}